\documentclass[10pt]{article}
\usepackage[top=1in, bottom=1in, left=1in, right=1in]{geometry}
\usepackage{amssymb,amsfonts,amsmath,amsthm,amscd,dsfont,mathrsfs,mathtools,microtype,nicefrac,pifont,soul}
\usepackage{todonotes}
\usepackage{lipsum}
\usepackage{amsfonts}
\usepackage{graphicx}
\usepackage{epstopdf}
\usepackage{algorithmic}
\usepackage{amsopn}
\usepackage{algorithmic}
\usepackage{subfigure}
\usepackage[utf8]{inputenc}
\usepackage[T1]{fontenc}   
\usepackage[linktocpage,colorlinks,linkcolor=blue,anchorcolor=blue,citecolor=blue,urlcolor=blue,pagebackref]{hyperref}
\usepackage{url}            
\usepackage{booktabs}       
\usepackage{xcolor}         
\usepackage[ruled,vlined,linesnumbered]{algorithm2e}
\usepackage{bbm}
\usepackage{bm}
\usepackage{color, colortbl}
\definecolor{LightCyan}{rgb}{0.88,1,1}
\usepackage{dirtytalk}
\usepackage{dsfont}
\usepackage{enumerate}
\usepackage{listings}
\usepackage{enumitem}
\usepackage{xspace}
\usepackage{makecell}
\usepackage{multirow, array}
\usepackage{verbatim}
\usepackage[capitalize,noabbrev]{cleveref}
\usepackage{tikz}
\usepackage{tcolorbox}
\usepackage{tablefootnote}
\usepackage[symbol]{footmisc}
\usepackage{caption}
\usepackage{wrapfig}

\newtheorem{theorem}{Theorem}

\newtheorem{lemma}{Lemma}[section]

\newtheorem{assumption}{Assumption}

\newcommand{\N}{\mathbb{N}}

\newcommand{\R}{\mathbb{R}}

\newcommand{\E}{\mathbb{E}}
\newcommand{\T}{^\mathsf{T}}
\newcommand{\G}{\mathcal{G}}
\newcommand{\F}{\mathcal{F}}

\newcommand{\norm}[1]{\left\|#1\right\|}

\newtcolorbox{thmbox}{colback=cyan!5,colframe=white}
\newtcolorbox{questionbox}{colback=red!5!white,colframe=white}
\newtcolorbox{updatebox}{colback=white,colframe=black}

\DeclareMathOperator*{\argmin}{arg\,min}
\definecolor{LightCyan}{rgb}{0.88,1,1}

\def\<#1,#2>{\langle #1,#2\rangle}
\allowdisplaybreaks

\renewcommand*{\backref}[1]{\ifx#1\relax \else Page #1 \fi}
\renewcommand*{\backrefalt}[4]{%
  \ifcase #1 \footnotesize{(Not cited.)}%
  \or        \footnotesize{(Cited on page~#2.)}%
  \else      \footnotesize{(Cited on pages~#2.)}%
  \fi
}
\title{\bf Stochastic Nested Compositional Bi-level \\ Optimization for Robust Feature Learning}

\author{Xuxing Chen\thanks{Department of Mathematics, University of California, Davis. 
  (\texttt{xuxchen@ucdavis.edu})}
\and Krishnakumar Balasubramanian\thanks{Department of Statistics, University of California, Davis. Supported by NSF Grant DMS-2053918.
  (\texttt{kbala@ucdavis.edu})}
\and Saeed Ghadimi\thanks{Department of Management Sciences, University of Waterloo.
(\texttt{sghadimi@uwaterloo.ca})}}

\begin{document}

\maketitle

\begin{abstract}
We develop and analyze stochastic approximation algorithms for solving nested compositional bi-level optimization problems. These problems involve a nested composition of $T$ potentially non-convex smooth functions in the upper-level, and a smooth and strongly convex function in the lower-level. Our proposed algorithm does not rely on matrix inversions or mini-batches and can achieve an $\epsilon$-stationary solution with an oracle complexity of approximately $\tilde{O}_T(1/\epsilon^{2})$, assuming the availability of stochastic first-order oracles for the individual functions in the composition and the lower-level, which are unbiased and have bounded moments. Here, $\tilde{O}_T$ hides polylog factors and constants that depend on $T$. The key challenge we address in establishing this result relates to handling three distinct sources of bias in the stochastic gradients. The first source arises from the compositional nature of the upper-level, the second stems from the bi-level structure, and the third emerges due to the utilization of Neumann series approximations to avoid matrix inversion. To demonstrate the effectiveness of our approach, we apply it to the problem of robust feature learning for deep neural networks under covariate shift, showcasing the benefits and advantages of our methodology in that context.
\end{abstract}



\section{Introduction}\label{sec:intro}
We study a new class of optimization problems, namely the nested compositional bi-level problems, that are given by
\begin{align}
\min_{x\in X}\;\; \Phi(x):= \Psi(x, y^*(x)), \quad\;\;\mbox{s.t.} \quad y^*(x)= \argmin_{y\in\R^q} g(x,y),\label{eq:multilevel}
\end{align}
where $X$ is a closed and convex set in $\R^p$, and $\Psi$ is a compositional function defined as $\Psi(x,y) = f_1\circ \dots \circ f_T(x,y)$. The functions $f_i(z)\coloneqq\E\left[F_i(z;\xi_i)\right] :\R^{d_i} \to \R^{d_{i-1}}$ for $i=1,2,...,T$, in the upper-level of \eqref{eq:multilevel} are assumed to be smooth. The function $g(x,y):= \E\left[G(x,y;\zeta)\right]:\R^{d_T} \to \R$ in the lower-level is also assumed to be smooth and strongly convex, with $d_T = p + q$. Our goal in this work is to develop and analyze fully-online, batch-free stochastic approximation algorithms to solve~\eqref{eq:multilevel}, given access to stochastic gradients (and function values) of the individual function. 


\textbf{Motivation.} The motivation behind our algorithm design for solving~\eqref{eq:multilevel} lies in our primary objective of developing robust feature learning methods within the realm of predictive deep learning. It is widely acknowledged that in various applications, the distribution of the testing data significantly differs from that of the training data, which is commonly referred to as covariate shift~\cite{pan2010survey,sugiyama2012machine}. Ensuring the robustness of training procedures against this shift is crucial for the effective implementation of predictive deep learning techniques in real-world scenarios. 

As a concrete example, consider solving least-squares non-parametric regression with deep neural networks (e.g., \cite{schmidt2020nonparametric}) with $Z\in\mathbb{R}^d$ being the input data and $Y\in \mathbb{R}$ being the real-valued response. We denote by $\Phi: \mathbb{R}^d \to \mathbb{R}^p$ as the feature learned by a depth $L$ neural network and $\beta \in \mathbb{R}^p$ to be the last layer of the neural network. With this notation, we could explicitly decouple the feature learning part (which is captured by $\Phi$) and the regression part (which is captured by $\beta$). The problem of learning robust features in this context could be formulated as the following distributionally robust (DR) bi-level optimization problem.

\begin{align}\label{eq:dro_minmax_form}
\underset{\Phi}{\min}~~ \underset{Q \in B(P)}{\max} ~~  \E_Q [(Y  - \langle \beta, \Phi(X)\rangle)^2] \quad\textrm{s.t.}\quad \beta = \underset{\tilde \beta \in \mathbb{R}^p}{\argmin}~~\E_P\big[ (Y  - \langle \tilde \beta, \Phi(X)\rangle)^2 \big].
\end{align}
When a coherent risk measure is chosen, the distributionally robust optimization problem at the top-level could be reformulated as a risk minimization problem~\cite{shapiro2021lectures}. In particular, when a mean semi-deviation risk measure is used, the min-max problem above can be reformulated as the following compositional minimization problem
\begin{align}
\begin{aligned}
\underset{\Phi}{\min} \big\{ \E[(Y  - \langle \beta, \Phi(X)\rangle)^2&] + \lambda (\E[\max(0,(Y  - \langle \beta, \Phi(X)\rangle)^2 - \E[(Y  - \langle \beta, \Phi(X)\rangle)^2] )^2])^{1/2} \big\}\\
&\textrm{s.t.}\quad\beta = \underset{\tilde \beta \in \mathbb{R}^p}{\argmin}~~\E_P\big[ (Y  - \langle \tilde \beta, \Phi(X)\rangle)^2 \big],\label{eq:droform}
\end{aligned}
\end{align}
which fits the general setup that we consider in~\eqref{eq:multilevel}. We refer, for example, to~\cite{yang2019multi, ruszczynski2020convergence,balasubramanian2022stochastic,zhu2023distributionally} for additional details on the reformulation. In Section~\ref{sec:experiments}, we provide simulation experiments illustrating the robustness of the above approach to certain classes of covariate shifts.

\textbf{Bias sources.} The main challenge in designing and analyzing stochastic optimization algorithms for solving~\eqref{eq:multilevel} is dealing with the various sources of bias arising in estimating the gradient. To illustrate the point, first note that under standard smoothness assumptions, it is easy to see that the gradient of $\Phi$ in~\eqref{eq:multilevel} is given by
\begin{align}
    \nabla \Phi(x) =   \textcolor{red}{\nabla_x\Psi}(x, \textcolor{blue}{y^*(x)}) -  \nabla_{xy}^2 g(x, \textcolor{blue}{y^*(x)})\cdot \underset{\text{\textcolor{violet}{Neumann series approximation }}}{\boxed{\nabla^2_{yy} g(x, \textcolor{blue}{y^*(x)})^{-1}  \textcolor{red}{\nabla_y\Psi}(x, \textcolor{blue}{y^*(x)})}}.\label{eq: hyper_grad}
\end{align} 
In the above expression, we have highlighted the following three sources of bias. The \textcolor{red}{first one} is due to the presence of nested composition of $T$ functions at the upper-level. The \textcolor{blue}{second one} is due to the presence of the bi-level structure in which there is a lack of knowledge of the exact solution to the lower problem. Moreover, calculating the Hessian inverse in \eqref{eq: hyper_grad} is computationally prohibitive even for moderate size problems. A common practice in bi-level optimization literature to avoid this matrix inversion is to  use the Neumann series based methods to approximate the Hessian inverse or its product and the gradient directly (see e.g., \cite{ghadimi2018approximation, hong2023two, ji2021bilevel, chen2021closing}). 
The \textcolor{violet}{third source} of bias thus comes from $\norm{\bar{r} - \nabla^2_{yy} g(x, \textcolor{blue}{y^*(x)})^{-1} \textcolor{red}{\nabla_y \Psi}(x, \textcolor{blue}{y^*(x)})}$, where $\bar{r}$ denotes the vector obtained by approximating the solution of the linear system $\nabla^2_{yy} g(x, \textcolor{blue}{y^*(x)})\cdot r = \textcolor{red}{\nabla_y \Psi}(x, \textcolor{blue}{y^*(x)})$. Moreover, the aforementioned sources of bias are also nested, i.e., the bias arising due to Neumann series approximation is affected by the nested structure of $\Phi$ and lack of knowledge of $y^*(x)$. A major contribution of our work lies in carefully dealing with the above three sources of bias in the design of our algorithm and its convergence analysis. 

\textbf{Related Works.} Studying bi-level problems dates back to \cite{BraMcG73,von1952theory}. Since then, there has been a body of literature working on different forms of bi-level problems. A series of works have focused on replacing the lower-level problem with its optimality conditions as constraints for the upper-level one and thus reducing the problem to a single-level constrained optimization problems (see e.g., \cite{HaBrGi92,ShiLuZha05}). On the other hand, a number of iterative algorithms have been also proposed to directly tackle the bi-level problems (see e.g., \cite{CouWan16,CouWan15}).

More recently, due to the emerging applications of bi-level models, studying finite-time convergence analysis of iterative algorithms for solving bi-level optimization problems has gained renewed interest. Approximation algorithms with established such convergence analysis have been first proposed in \cite{ghadimi2018approximation} for convex/nonconvex and deterministic/stochastic settings and followed up by several works aimed to improve the complexity bounds under different structural assumptions or settings (see e.g., \cite{hong2023two, ji2021bilevel,chen2021closing,yang2021provably,huang2022efficiently,chen2023bilevel,chen2022decentralized, chen2023decentralized, yang2022decentralized}).

On the other hand, composition problems have been first studied in \cite{ermoliev1976methods} where the authors studied a penalized version of stochastic constraints in the objective function. There has been renewed interest in analyzing finite-time convergence analysis of SA-type algorithms over the past few years (see e.g., \cite{wang2017stochastic,WaLiFa17,yang2019multi}). However, all of these works obtained complexity bounds which are worse than their counterparts for classical stochastic optimization, which are single-level problems. The first optimal bound for two-level composition problems has been established in \cite{ghadimi2020single} and generalized to the multi-level case in \cite{balasubramanian2022stochastic}. Other works have also been developed to study nested composition problems under different assumptions, like non-smoothness (e.g., \cite{cui2020multicomposite,rusz20,liu2022solving}), mean-square smoothness (e.g.,~\cite{chen2021solving,zhang2021multilevel}) and dependence (e.g.,~\cite{hu2020biased,hu2021bias}).

Our main contributions in this paper consist of the following aspects.

\begin{itemize}
    \item We generalize the bi-level optimization problems by considering the case when the upper-level problem is a constrained optimization over a nested composition of $T$ functions, as defined in \eqref{eq:multilevel}. We propose \texttt{BiLiNASA}, a fully-online and batch-free stochastic approximation algorithm for solving \eqref{eq:multilevel}, based on a novel Neumann series approximation procedure for avoiding matrix inversions. To our knowledge, there has been no prior work considering the above mentioned combination of bi-level and nested compositional problems.
    \item We undertake a careful analysis of the oracle complexity of the proposed algorithm and show in Theorem~\ref{thm:mainresult} that \texttt{BiLiNASA} requires $\tilde{O}_T(1/\epsilon^{2})$ calls to the stochastic oracle to obtain an $\epsilon$-approximate stationary solution. Our proofs are based on \emph{non-trivial} adaptations of the existing analyses of bi-level and nested compositional problems;  the main difficulty arises in handling the different sources of biases in the stochastic gradients. 
    \item We model the problem of robust feature learning in predictive deep learning as a special case of the proposed bi-level framework. Through simulations we demonstrate that the learnt features using \texttt{BiLiNASA} are robust to a wide class of covariate shifts.
\end{itemize}

The rest of this paper is organized as follows. In Section \ref{sec:alg} we present our main algorithms, and in Section \ref{sec: main_results} we provide their convergence analyses. We then present simulation results in Section \ref{sec:experiments}. 

\begin{table}[t]
\centering
\begin{tabular}{|c|c|c|c|c|c|}
\hline
Method   & Samples & Mini-batch & Feasible Set & Matrix Inversion & $T$\\ \hline
\texttt{BSA}* \cite{ghadimi2018approximation} & $\tilde{O}(\epsilon^{-3})$ & No & $\R^{p}$  & No     & 1          \\ \hline
\texttt{TTSA}* \cite{hong2023two} & $\tilde{O}(\epsilon^{-2.5})$  & No & $\R^{p}$  & No   & 1            \\ \hline
\texttt{StocBiO} \cite{ji2021bilevel}  & $\tilde{O}(\epsilon^{-2})$ & Yes &$ \R^{p}$  & No    & 1           \\ \hline
\texttt{ALSET}* \cite{chen2021closing}    & $\tilde{O}(\epsilon^{-2})$ & No & $\R^{p}$  & No   & 1            \\ \hline
\texttt{STABLE} \cite{chen2022single}   & $O(\epsilon^{-2})$ & No & $\R^{p}$  & Yes        & 1      \\ \hline
\texttt{AmIGO} \cite{arbel2021amortized}    & $O(\epsilon^{-2})$ & Yes & $\R^{p}$ & No   & 1            \\ \hline
\texttt{SOBA}** \cite{dagreou2022framework} & $O(\epsilon^{-2})$ & No & $\R^{p}$ & No   & 1 \\ \hline
\texttt{MA-SOBA} \cite{chen2023optimal} & $O(\epsilon^{-2})$ & No & $X\subseteq\mathbb{R}^p$ & No   & 1 \\ \hline
\rowcolor{LightCyan} \texttt{BiLiNASA} & $\tilde{O}_T(\epsilon^{-2})$  & No & $X\subseteq\mathbb{R}^p$  & No        & $\geq 1$       \\ \hline
\end{tabular}
\caption{Sample complexity of gradient evaluations for obtaining an $\epsilon$-approximate first-order stationary solution of the upper-level problem in~\eqref{eq:multilevel}. Here, $\tilde{O}$ hides polylog factors and in the constrained case $X$ is assumed to be closed and convex.~\texttt{BiLiNASA} corresponds to Algorithm~\ref{algo: bilinasa}.  *\texttt{BSA}, \texttt{TTSA} and \texttt{ALSET} have an additional $\log$ factors (as stated here), arising due to an error in their analysis; see Appendix~\ref{sec: issue}. **\texttt{SOBA} requires an additional higher-order smoothness condition compared to other methods.}
\label{table: compare_bi-level}
\end{table}

\section{Preliminaries and Methodology}
\label{sec:alg}
In this section, we first provide the main assumptions and some technical results that will be later used in our analysis. We first need the following assumptions which are standard in both bi-level and compositional optimization literature.
\begin{assumption}\label{assump: fi_lips}
All functions $f_1,\dots,f_T$ and their derivatives are Lipschitz continuous with Lipschitz constants $L_{f_i}$ and $L_{\nabla f_i}$, respectively.
\end{assumption}

\begin{assumption}\label{assump: g}
    The lower-level function $g$ is twice continuously differentiable and $\mu_g$-strongly convex with respect to $y$ for any $x$. Its gradient $\nabla g$ and Hessian matrix $\nabla^2 g$ are Lipschitz continuous with Lipschitz constants $L_{\nabla g}$ and $L_{\nabla^2 g}$. 
\end{assumption}

We adopt some useful properties from bi-level optimization and compositional optimization literature. 
\begin{lemma}\label{lem: lip_of_psi}
    Suppose Assumption \ref{assump: fi_lips} holds. Then $\Psi(x,y)$ and $\nabla \Psi(x,y)$ are $L_{\Psi}$ and $L_{\nabla\Psi}$-Lipschitz continuous respectively with the constants given by
    \begin{align*}
        L_{\Psi} = \prod_{i=1}^{T}L_{f_i},\ L_{\nabla \Psi} = \sum_{j=1}^{T}\left[L_{\nabla f_j}\prod_{l=1}^{j-1}L_{f_l}\prod_{l=j+1}^{T}L_{f_l}^2\right]
    \end{align*}
\end{lemma}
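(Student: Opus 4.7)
My plan is to establish both bounds by a telescoping argument over the nested composition, treating the two statements in turn.

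For the first bound, I would introduce the partial compositions $h_j := f_j \circ f_{j+1} \circ \cdots \circ f_T$ for $j = 1,\dots,T$, so that $h_1 = \Psi$. A short induction, starting from $h_T = f_T$ and using Assumption~\ref{assump: fi_lips} at each step via
\[
\|h_j(z_1) - h_j(z_2)\| = \|f_j(h_{j+1}(z_1)) - f_j(h_{j+1}(z_2))\| \le L_{f_j}\,\|h_{j+1}(z_1) - h_{j+1}(z_2)\|,
\]
then immediately yields $L_\Psi = \prod_{i=1}^T L_{f_i}$. As a byproduct I will record that $h_j$ is $\big(\prod_{l=j}^T L_{f_l}\big)$-Lipschitz, since this is needed in the second part.

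For the second bound I would apply the chain rule to write
\[
\nabla \Psi(z) \;=\; \nabla f_T(z)^\mathsf{T}\,\nabla f_{T-1}(h_T(z))^\mathsf{T}\cdots \nabla f_1(h_2(z))^\mathsf{T},
\]
i.e.\ as a product of $T$ Jacobian factors $A_j(z) := \nabla f_j(h_{j+1}(z))^\mathsf{T}$ (with the convention $h_{T+1}(z)=z$). Assumption~\ref{assump: fi_lips} gives the uniform bound $\|A_j(z)\| \le L_{f_j}$. Moreover, combining the Lipschitz continuity of $\nabla f_j$ with the Lipschitz constant of $h_{j+1}$ derived above gives
\[
\|A_j(z_1) - A_j(z_2)\| \;\le\; L_{\nabla f_j}\,\Big(\prod_{l=j+1}^T L_{f_l}\Big)\,\|z_1 - z_2\|.
\]
I would then expand $\nabla \Psi(z_1) - \nabla \Psi(z_2)$ by the standard telescoping identity for differences of products,
\[
\prod_{j=1}^T A_j(z_1) - \prod_{j=1}^T A_j(z_2) \;=\; \sum_{j=1}^T \Big(\prod_{i<j} A_i(z_1)\Big)\bigl(A_j(z_1)-A_j(z_2)\bigr)\Big(\prod_{i>j} A_i(z_2)\Big),
\]
bound the left and right factors by $\prod_{i<j} L_{f_i}$ and $\prod_{i>j} L_{f_i}$ respectively, and substitute the Lipschitz bound on $A_j$. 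The $j$-th term then contributes exactly $L_{\nabla f_j}\,\prod_{l=1}^{j-1}L_{f_l}\,\prod_{l=j+1}^{T}L_{f_l}^{2}\,\|z_1-z_2\|$, and summing over $j$ gives the stated $L_{\nabla \Psi}$.

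The only place that requires care is matching the indexing between the Jacobian factors and their arguments: the extra square on $L_{f_l}$ for $l > j$ in the formula comes from two logically distinct sources, namely the operator-norm bound on the $A_i$ for $i>j$ (one factor of $\prod_{l>j} L_{f_l}$) and the Lipschitz constant of the inner argument $h_{j+1}$ entering the Lipschitz bound on $A_j$ itself (a second factor of $\prod_{l>j} L_{f_l}$). Keeping these two contributions separate in the telescoping bookkeeping is the main subtlety; beyond that, the rest is routine product-rule estimation, and no mini-batching or additional assumption on $g$ is needed for this lemma.
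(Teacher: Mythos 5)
Your proposal is correct and fills in exactly the argument the paper omits: the paper proves $L_\Psi$ by direct appeal to Assumption~\ref{assump: fi_lips} and defers $L_{\nabla\Psi}$ to Lemma~2.1 of the cited compositional-optimization reference, whose proof is the same telescoping product decomposition you give, with the two sources of the squared factor $\prod_{l>j}L_{f_l}^2$ (operator-norm bounds on the outer Jacobians versus the Lipschitz constant of the inner argument $h_{j+1}$) tracked just as you describe. The only cosmetic point is the ordering of the factors $A_j$ in the matrix product, which does not affect the norm bounds.
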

\begin{proof}
    The expression of $L_{\Psi}$ is a direct result of Assumption \ref{assump: fi_lips}. The proof of the closed form expression of $L_{\nabla\Psi}$ can be found in Lemma 2.1 of \cite{balasubramanian2022stochastic}.
\end{proof}
\begin{lemma}[Lemma 2.2 in \cite{ghadimi2018approximation}]\label{lem: bi-level_smoothness}
    Suppose Assumptions \ref{assump: fi_lips} and \ref{assump: g} hold. Then the hypergradient $\nabla\Phi(x)$ takes the form
    \begin{align}
        \nabla \Phi(x) =  \nabla_x \Psi(x, y^*(x)) -  \nabla_{xy}^2 g(x, y^*(x))\cdot \nabla^2_{yy} g(x, y^*(x))^{-1} \nabla_y \Psi(x, y^*(x)).\label{hyper_grad}
    \end{align}
    Moreover, $y^*(x)$ and $\nabla\Phi(x)$ are $L_{y^*}$ and $L_{\nabla\Phi}$-Lipschitz continuous respectively with the constants given by $L_{y^*} = \frac{L_{\nabla g}}{\mu_g}$ and 
    \begin{align*}
        L_{\nabla \Phi} &= L_{\nabla_x \Psi} + \frac{(L_{\nabla_x \Psi} + L_{\nabla_y \Psi})L_{\nabla g} + L_{\Psi}^2L_{\nabla^2 g}}{\mu_g} + \frac{2L_{\Psi}L_{\nabla g}L_{\nabla^2 g}+L_{\nabla_y\Psi}L_{\nabla g}^2}{\mu_g^2} + \frac{L_{\Psi}L_{\nabla^2 g}L_{\nabla g}^2}{\mu_g^3},
    \end{align*}
    where $L_{\nabla_x \Psi}$ and $L_{\nabla_y \Psi}$ represent the Lipschitz constants of $\nabla_x \Psi$ and $\nabla_y \Psi$ respectively.
\end{lemma}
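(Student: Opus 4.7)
The plan is to derive the hypergradient formula via the implicit function theorem, then obtain the two Lipschitz constants by estimating operator norms of the relevant derivatives and using matrix-inverse perturbation identities. Since the result is cited from \cite{ghadimi2018approximation}, the argument is standard, but we sketch the steps that one would track carefully in a self-contained derivation.

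First, applying the chain rule to $\Phi(x) = \Psi(x, y^*(x))$ yields
\begin{align*}
\nabla \Phi(x) = \nabla_x \Psi(x, y^*(x)) + \nabla y^*(x)\T \nabla_y \Psi(x, y^*(x)).
\end{align*}
To obtain $\nabla y^*(x)$ I would invoke the implicit function theorem on the first-order optimality condition $\nabla_y g(x, y^*(x)) = 0$, which is valid because Assumption \ref{assump: g} makes $\nabla^2_{yy} g$ invertible with $\|\nabla^2_{yy} g(x,y)^{-1}\| \leq 1/\mu_g$. Differentiating the optimality condition in $x$ gives
\begin{align*}
\nabla^2_{xy} g(x, y^*(x)) + \nabla y^*(x)\T \nabla^2_{yy} g(x, y^*(x)) = 0,
\end{align*}
so $\nabla y^*(x)\T = -\nabla^2_{xy} g(x, y^*(x))\, \nabla^2_{yy} g(x, y^*(x))^{-1}$. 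Substituting this into the chain-rule expression recovers \eqref{hyper_grad}.

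For the Lipschitz constant of $y^*$, I would simply bound the operator norm of $\nabla y^*(x)$: the strong convexity gives $\|\nabla^2_{yy} g^{-1}\| \leq 1/\mu_g$, and the Lipschitz continuity of $\nabla g$ (Assumption \ref{assump: g}) gives $\|\nabla^2_{xy} g\| \leq L_{\nabla g}$, so $\|\nabla y^*(x)\| \leq L_{\nabla g}/\mu_g =: L_{y^*}$, which then gives the Lipschitz property by the mean value theorem.

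The main obstacle is the Lipschitz estimate for $\nabla \Phi$, since the bound $L_{\nabla \Phi}$ contains four distinct groups of terms arising from different ``sources'' of non-smoothness in \eqref{hyper_grad}. I would write $\nabla \Phi(x_1) - \nabla \Phi(x_2)$ as a telescoping sum and apply the triangle inequality to separately control:
\begin{enumerate}
\item the change $\nabla_x \Psi(x_1, y^*(x_1)) - \nabla_x \Psi(x_2, y^*(x_2))$, handled via $L_{\nabla_x \Psi}$-Lipschitzness of $\nabla_x \Psi$ combined with the $L_{y^*}$-Lipschitzness just established;
\item the change in $\nabla^2_{xy} g$ evaluated at the two arguments, bounded by $L_{\nabla^2 g}(1 + L_{y^*})$;
\item the change in the inverse Hessian, which I would handle with the identity $A^{-1} - B^{-1} = A^{-1}(B-A)B^{-1}$ together with the uniform bound $\|\nabla^2_{yy} g^{-1}\| \leq 1/\mu_g$ and the Lipschitz continuity of $\nabla^2 g$;
\item the change in $\nabla_y \Psi$, handled as in (i).
\end{enumerate}
Throughout I would use the uniform bounds $\|\nabla^2_{xy} g\| \leq L_{\nabla g}$ and $\|\nabla_y \Psi\| \leq L_{\Psi}$ (the latter coming from Lemma \ref{lem: lip_of_psi}) to keep the product terms manageable. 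Collecting the four contributions and grouping terms by their power of $1/\mu_g$ produces exactly the stated expression for $L_{\nabla \Phi}$. The bookkeeping is routine once the telescoping decomposition is fixed, but getting the constants to match the stated form is the delicate part.
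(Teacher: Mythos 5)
The paper does not actually prove this lemma---it imports it verbatim from Lemma~2.2 of the cited reference---and your sketch reproduces the standard argument behind that result: implicit function theorem applied to $\nabla_y g(x,y^*(x))=0$ for the hypergradient formula, the bound $\|\nabla^2_{yy}g^{-1}\|\le 1/\mu_g$ for $L_{y^*}$, and a four-term telescoping decomposition with the identity $A^{-1}-B^{-1}=A^{-1}(B-A)B^{-1}$ for $L_{\nabla\Phi}$; the approach is sound. One bookkeeping caveat before claiming the constants match \emph{exactly}: carrying out your decomposition with $\|\nabla_y\Psi\|\le L_{\Psi}$, $\|\nabla^2_{xy}g\|\le L_{\nabla g}$, and the factor $1+L_{y^*}=1+L_{\nabla g}/\mu_g$ from composing with $y^*$ yields a term $L_{\Psi}L_{\nabla^2 g}/\mu_g$ where the stated constant reads $L_{\Psi}^2L_{\nabla^2 g}/\mu_g$, so that term either carries an extra factor of $L_{\Psi}$ in the paper's statement or reflects a different intermediate bound---worth reconciling rather than asserting the collection of terms ``produces exactly the stated expression.''
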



The difference is that now $\Psi$ is a compositional function and its gradient is:
\begin{equation}
    \begin{pmatrix}
        \nabla_x \Psi(x,y) \\
        \nabla_y \Psi(x,y)
    \end{pmatrix}=\nabla \Psi(x,y) = \nabla f_T(x, y)\nabla f_{T-1}(\tilde{x}_{T-1})\dots\nabla f_1(\tilde{x}_1), 
 \end{equation}
where $\nabla f_i$ denotes the transpose of the Jacobian matrix of $f_i$, and $\tilde{x}_i = f_{i+1}\circ \dots \circ f_T(x, y)$ for $1\leq i<T$.

Our methods generate different random sequences $\{d_k, x_k, u_k^{(i)}, y_k^{(i)}\}$, for which we define the following filtrations
\begin{align*}
    &\F_k = \sigma\left(\{u_0^{(1)},...,u_k^{(1)}, ...,u_0^{(T+1)},...,u_k^{(T+1)}, d_0, ..., d_k\}\right), \\
        &\G_{j}^{(t)} = \sigma\left(\bigcup_{l=0}^{j-1}\{x_l, y_l^{(0)}, y_l^{(1)}, ..., y_l^{(N)}\}\bigcup\{x_j, y_j^{(0)},y_j^{(1)},...,y_j^{(t)}\}\right).
\end{align*}

Moreover, we make the following standard assumptions on the outputs of the stochastic oracle which are used at each iteration of the algorithm.
\begin{assumption}\label{assump: stochastic_oracles}
Denote $u_{k}^{(T+1)} \equiv (x_k, y_k^{(N)})$. For each $i, k$ and $t$, the stochastic oracle outputs $F_{k+1}^{(i)} \in \mathbb{R}^{d_{i-1}},\ J_{k+1}^{(i)} \in \mathbb{R}^{d_i \times d_{i-1}},\ v_k^{(t)}\in \R^{q},\ J_g^{(k+1)}\in\R^{p\times q},$ and $H_{n}^{(k+1)} \in\R^{q\times q}$ such that for $i \in \{1,\ldots,T\}$,
\begin{enumerate}
\item The outputs are unbiased and have bounded variances:
\begin{align*}
    &\E\left[F_{k+1}^{(i)}|\F_k\right] = f_i(u_k^{(i+1)}),\ \E\left[\|F_{k+1}^{(i)}-f_i(u_k^{(i+1)})\|^2
        |\F_k\right]\leq \sigma_{F_i}^2,\\
        &\E\left[J_{k+1}^{(i)}|\F_k\right] = \nabla f_i(u_k^{(i+1)}),\ \E\left[\|J_{k+1}^{(i)} - \nabla f_i(u_k^{(i+1)})\|^2\right]\leq  \sigma_{J_i}^2,\ \E\left[\|J_{k+1}^{(i)}\|^2
        |\F_k\right]\leq \hat \sigma_{J_i}^2,\ \\
        &\E\left[v_k^{(t)}|\G_k^{(t)}\right] = \nabla_y g(x_{k},y_{k}^{(t)}),\ \E\left[\|v_k^{(t)}-\nabla_y g(x_{k},y_{k}^{(t)})\|^2|\G_k^{(t)}\right]\leq \sigma_v^2, \\
        &\E\left[J_g^{(k+1)}|\F_k\right] = \nabla_{xy}^2 g(x_{k},y_{k}^{(N)}),\ \E\left[\|J_g^{(k)}\|^2
        |\F_k\right]\leq \sigma_{J_g}^2, \\
        &\E\left[H_{n}^{(k+1)}|\F_k\right] = \nabla_{yy}^2g(x_k,y_k^{(N)}),\ \E\left[\|H_{n}^{(k+1)} - \nabla_{yy}^2g(x_k,y_k^{(N)})\|^2\right]\leq \sigma_{H_g}^2.
\end{align*}
\item Given $\F_k$, the outputs of the stochastic oracle at each level $i, F_{k+1}^{(i)}, J_{k+1}^{(i)}, J_g^{(k+1)}$ and $H_{n}^{(k+1)}$ are independent.
\item Given $\F_k$, the outputs of the stochastic oracle are independent between levels i.e., $\{F_{k+1}^{(i)}\}_{i=1,\ldots,T}$ are independent and so are $\{J_{k+1}^{(i)}\}_{i=1,\ldots,T}$.
\end{enumerate}
\end{assumption}

Recall from~\eqref{eq: hyper_grad} that the hypergradient of the bi-level problems involves a computationally demanding matrix inverse operation. Algorithms like \texttt{BSA}~\cite{ghadimi2018approximation}, \texttt{TTSA}~\cite{hong2023two}, and \texttt{ALSET}~\cite{chen2021closing} overcome this challenge by using a Neumann series approximation for the matrix inverse. However, we point out that these works suffer from an error in the analysis, which is detailed in Appendix~\ref{sec: issue}. In this work, we fix this issue and as a consequence we identify that the rates in the above works have an additional $\log$ factor, similar to our results. 

In this paper, we provide a different approach in which we estimate the product of the Hessian inverse and partial derivative in \eqref{hyper_grad} by taking a weighted average of stochastic Hessian-vector products as shown in Algorithm~\ref{algo: nhe}.
\begin{algorithm}[htp]
    \caption{\texttt{NHE}: Nested Hypergradient Estimation}
    \label{algo: nhe}
    \begin{algorithmic}[1]
        \REQUIRE{Points $x\in X, y\in \R^q, u^{(i)} \in \R^{d_i} (1\leq i\leq T)$, and parameters $\alpha > 0, M\in\N_+$.}
        \STATE{Call the stochastic oracle to compute stochastic  Jacobian matrix $J_g$ at $(x, y)$ and stochastic gradients\\ $J^{(i)}$ at $u^{(i+1)}$ for $i=1,...,T$ by denoting $u^{(T+1)} = (x, y)$.}
    \STATE{Set $r_0 = \begin{pmatrix}
        r_{0,x} \\
        r_{0,y} \\
    \end{pmatrix} = \prod_{i=1}^{T} J^{(T+1-i)} \in \R^{p+q},\ \bar{r}_{0,y}=0.$}
    \FOR{$n=1,2,...,M$}{
         \STATE{Compute the stochastic Hessian matrix $H_n$ and set
         \begin{equation}
           \bar{r}_{n,y} = (I - \alpha H_n )\bar{r}_{n-1,y} + \alpha r_{0,y}.\label{def_rbar}
             \end{equation}}
    }
    \ENDFOR
    \STATE{Set $r = r_{0,x} - J_g \bar{r}_{M,y}.$}
    \RETURN{$r$}
    \end{algorithmic}
\end{algorithm}
In this method, named as Nested Hypergradient Estimation, we first call the stochastic oracle to estimate the partial derivatives of the composition function $\Psi$ at a given point $(x,y)$ and then estimate the $(\nabla_{yy}^2 g)^{-1} \nabla_y \Psi$ in a loop. Finally, we can estimate the hypergradient by incorporating the stochastic second-order partial derivatives of the inner function $g$ as in Step 6 of this method.
We are now ready to present our method as Algorithm~\ref{algo: bilinasa}.


\begin{algorithm}[htp]
    \caption{\texttt{BiLiNASA}: Bi-level Linearized Nested Averaging Stochastic Approximation}
    \label{algo: bilinasa}
    \begin{algorithmic}[1]
        \REQUIRE{Initial points $x_0 \in X, y_0 \in \R^q, u_0^{(i)} \in \R^{d_i} (1\leq i\leq T)$, number of iterations for the outer loop $K$, number of iterations for the inner loop $N$, number of iterations and the averaging weight for the Hessian approximation subroutine $M$, and $\alpha$, and stepsize parameters $\beta>0, \ \ \tau_k \in (0,1], \ \ \gamma_k>0$.}
    	\FOR{$k=0,1,...,K-1$}{
    	    \STATE{$z_k = \argmin_{z\in X}\left\{\<d_k, z - x_k> + \frac{\beta}{2}\|z-x_k\|^2\right\}$.}
    		\STATE{$x_{k+1} = x_{k} +\tau_k (z_k - x_k)$.}
    	    \STATE{$y_{k+1}^{(0)} = y_{k}^{(N)} \text{ if } k>0, \text{ otherwise }  y_{k+1}^{(0)} = y^{(0)}.$}
    		\FOR{$t=0,1,..., N-1$}{
    		    \STATE{Call the stochastic oracle to compute $v_{k+1}^{(t)}$.}
                \STATE{$y_{k+1}^{(t+1)} = y_{k+1}^{(t)} -\gamma_{k+1} v_{k+1}^{(t)}.$}
            }
            \ENDFOR
    		\STATE{Run Algorithm \texttt{NHE}($x_k, y_k^{(N)}, u_k^{(i)}, \alpha, M$) and let the output be $w_{k+1}$.}
            \STATE{Set $d_{k+1} = (1 - \tau_{k})d_{k} + \tau_{k} w_{k+1}.$}
            \STATE{Compute stochastic functions values $F_{k+1}^{(i)}$ at $u_{k}^{(i+1)}$, and set}
            \STATE{$u_{k+1}^{(i)} = (1 - \tau_{k})u_{k}^{(i)} + \tau_{k} F_{k+1}^{(i)} + (J_{k+1}^{(i)})\T(u_{k+1}^{(i+1)} - u_{k}^{(i+1)})$ for $i = 1,...,T$.}
    	}
    	\ENDFOR
    	\RETURN{$x_K$}
    \end{algorithmic}
\end{algorithm}
We now make a few comments about the above algorithm. First, note that ignoring the inner loop, the NHE method, and assuming that $w_{k+1}$ is an unbiased gradient estimator for $\Phi$, Algorithm~\ref{algo: bilinasa} reduces to the one in \cite{balasubramanian2022stochastic} proposed for solving the nested compositonal optimization problems. However, due to the bi-level structure, we need another sequence ($y_k^{t}$) to update the decision variable for the lower problem. Second, while Algorithm NHE provides a biased estimate for the gradient of the upper problem, taking the weighted average of such estimates in Step 10 of the algorithm will help us to reduce the associated bias. Finally, using the linear approximation of the inner functions in the upper objective function will also reduce the noise associated with their function value estimates. 

\section{Main Result}\label{sec: main_results}
In this section, we will provide the convergence analysis of Algorithm~\ref{algo: bilinasa} under the standard assumptions that have been presented before. We first need to define our termination criterion for which the convergence of Algorithm~\ref{algo: bilinasa} is provided. 

Following \cite{ghadimi2020single, balasubramanian2022stochastic}, we define the measure of optimality $V_k$ as
\begin{equation}\label{eq: V_k}
    V_k = \|z_k - x_k\|^2 + \|d_k - \nabla\Phi(x_k)\|^2,
\end{equation}
which is commonly used as a measure of optimality in compositional optimization literature \cite{ghadimi2020single, balasubramanian2022stochastic}, since it provides an upper bound for gradient mapping \cite{ghadimi2016mini, nesterov2013gradient, nesterov2018lectures}. We now state the main convergence result of Algorithm~\ref{algo: bilinasa}.

\begin{theorem}\label{thm:mainresult}
    Suppose that Assumptions \ref{assump: fi_lips}, \ref{assump: g}, and \ref{assump: stochastic_oracles} hold. Moreover, assume that the stepsize sequences $\{\tau_j\}_{j=0}^{\infty}$ and $\{\gamma_j\}_{j=0}^{\infty}$, for any $k\geq 0$, satisfy
    \begin{equation}
        \begin{aligned}
            \tau_{k+1}\leq \tau_{k},\ 0< \gamma_{k+1}\leq \gamma_k\leq c_{\gamma}\tau_k\leq \tau_k\leq \tau_0<1 \text{ for some }c_{\gamma}>0,\label{assump: stepsize}
        \end{aligned}
    \end{equation}
    and $\alpha,\ N$ and $M$ also satisfy 
    \begin{align*}
        &0<\alpha < \min\left\{\frac{\mu_g}{\mu_g^2 + \sigma_{H_g}^2},\ \frac{1}{L_{\nabla g}} \right\}, N\geq \left(1 + \frac{1}{1-\tau_1}\right)\frac{\tau_k}{2\gamma_k\mu_g}, \delta_g := (1-\alpha\mu_g)^{M}\leq \frac{1}{2}.
    \end{align*}
Letting $R$ be chosen from $\{0,1,...,K\}$ with the probability mass function $P(R=k) = \frac{\tau_k}{\sum_{j=0}^{K}\tau_j}$, we have
    \[
        \E\left[V_R\right] = O_T\left(\frac{\sum_{k=0}^{K}\tau_k^2 + 1}{\sum_{k=0}^{K}\tau_k} + \delta_g^2\right).
    \]
\end{theorem}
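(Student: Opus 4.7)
The plan is to build a Lyapunov (potential) function that simultaneously tracks the four quantities that govern the progress of Algorithm~\ref{algo: bilinasa}: the objective $\Phi(x_k)$; the gradient-tracking error $E_k^{(d)}:=\E\|d_k-\nabla\Phi(x_k)\|^2$; the compositional tracking errors $E_k^{(i)}:=\E\|u_k^{(i)}-f_i\circ\cdots\circ f_T(x_k,y_k^{(N)})\|^2$ for $i=1,\ldots,T$; and the lower-level tracking error $E_k^{(y)}:=\E\|y_k^{(N)}-y^*(x_k)\|^2$. The goal is to derive one-step recursions for each of these, combine them with well-chosen coefficients so that every cross term is absorbed, and conclude that the combined potential drops by $\Omega(\tau_k\E[V_k])$ up to an $O_T(\tau_k^2+\tau_k\delta_g^2)$ noise term. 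The first step is the standard descent inequality: $L_{\nabla\Phi}$-smoothness from Lemma~\ref{lem: bi-level_smoothness} together with the first-order optimality of $z_k$ (substituting $z=x_k$ in the prox problem yields $\langle d_k, z_k-x_k\rangle\leq-\tfrac{\beta}{2}\|z_k-x_k\|^2$) gives
\[
\Phi(x_{k+1})\leq \Phi(x_k)-\tfrac{\tau_k\beta}{4}\|z_k-x_k\|^2+\tfrac{\tau_k}{\beta}E_k^{(d)}+\tfrac{L_{\nabla\Phi}}{2}\tau_k^2\|z_k-x_k\|^2,
\]
which for $\tau_k$ small enough already links $V_k$ to $E_k^{(d)}$.

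The core technical work is then to derive recursions for the remaining three error types. For $E_k^{(y)}$, strong convexity of $g$ and $L_{y^*}$-Lipschitzness of $y^*$ from Lemma~\ref{lem: bi-level_smoothness} give a one-step bound of the form $E_{k+1}^{(y)}\leq (1-\gamma_{k+1}\mu_g)^N E_k^{(y)} + O(\gamma_{k+1})\sigma_v^2 + O(L_{y^*}^2\tau_k^2)\E\|z_k-x_k\|^2$; the hypothesis $N\geq (1+1/(1-\tau_1))\tau_k/(2\gamma_k\mu_g)$ is chosen precisely so that the contraction eats the $\tau_k$ in front of the drift and yields $E_k^{(y)}=O(\gamma_k)=O(\tau_k)$ in steady state. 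For the Neumann subroutine \texttt{NHE}, the recursion $\bar r_{n,y}=(I-\alpha H_n)\bar r_{n-1,y}+\alpha r_{0,y}$, together with $0<\alpha<\mu_g/(\mu_g^2+\sigma_{H_g}^2)$ and $\alpha<1/L_{\nabla g}$, is a contraction in expectation with rate $(1-\alpha\mu_g)$, so $\E\|\bar r_{M,y}-(\nabla_{yy}^2 g(x_k,y_k^{(N)}))^{-1}r_{0,y}\|^2$ is bounded by $\delta_g^2$ times a bounded-in-expectation quantity coming from Assumption~\ref{assump: stochastic_oracles}. Combining this with the unbiasedness of $J_g^{(k+1)}$ and the $J_{k+1}^{(i)}$ gives a clean decomposition of the bias of $w_{k+1}$ relative to $\nabla\Phi(x_k)$ into a sum of terms controlled by $E_k^{(y)}$, the $E_k^{(i)}$, and $\delta_g^2$.

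Next, the gradient-tracking recursion $d_{k+1}=(1-\tau_k)d_k+\tau_k w_{k+1}$ is a variance-reduced moving average; expanding the square and conditioning on $\F_k$ in the standard way yields
\[
E_{k+1}^{(d)}\leq (1-\tau_k)E_k^{(d)}+O(\tau_k^2)+O(\tau_k)\Bigl(E_k^{(y)}+\sum_{i=1}^T E_k^{(i)}+\delta_g^2\Bigr)+O(L_{\nabla\Phi}^2\tau_k)\E\|x_{k+1}-x_k\|^2,
\]
where the last term captures the movement of $\nabla\Phi$ along the trajectory. A parallel linearization argument, using the Jacobian correction $(J_{k+1}^{(i)})^{\mathsf T}(u_{k+1}^{(i+1)}-u_k^{(i+1)})$ in Step~12 of Algorithm~\ref{algo: bilinasa} exactly as in the NASA analyses of \cite{ghadimi2020single,balasubramanian2022stochastic}, produces analogous contractive recursions $E_{k+1}^{(i)}\leq(1-\tau_k)E_k^{(i)}+O(\tau_k^2)+O(\tau_k^2)\E\|z_k-x_k\|^2+O(\gamma_k^2)$.

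The endgame is to set
\[
W_k=\Phi(x_k)-\min_X\Phi+C_d E_k^{(d)}+\sum_{i=1}^T C_i E_k^{(i)}+C_y E_k^{(y)},
\]
with constants $C_d,C_i,C_y$ (depending on $T$ and the Lipschitz constants but independent of $K$) chosen large enough that all cross terms in the four recursions are dominated; the stepsize coupling $\gamma_k\leq c_\gamma\tau_k$ in~\eqref{assump: stepsize} is exactly what is needed for this absorption to go through. This yields $\E[W_{k+1}]\leq \E[W_k]-c\tau_k\E[V_k]+O_T(\tau_k^2+\tau_k\delta_g^2)$; telescoping from $k=0$ to $K$, dividing by $\sum_{k=0}^K\tau_k$, and interpreting the weighted sum as the expectation under $P(R=k)=\tau_k/\sum_j\tau_j$ gives the advertised rate. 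The main obstacle is precisely the tight coupling among the four error sources: the bias of $w_{k+1}$ depends on $E_k^{(y)}$, the $E_k^{(i)}$, and $\delta_g^2$ in a nested way (the Neumann error multiplies the compositional error, which itself depends on the lower-level error), while $E_k^{(y)}$ is driven by $\|x_{k+1}-x_k\|=\tau_k\|z_k-x_k\|$ and the $E_k^{(i)}$ are driven by both $\tau_k$- and $\gamma_k$-drifts. Closing the coefficients simultaneously, without a bounded-iterate assumption and using only the second-moment bounds on the Jacobians from Assumption~\ref{assump: stochastic_oracles}, is the hardest part of the argument.
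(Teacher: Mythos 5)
Your strategy is sound and, at the level of ideas, matches the paper's: a weighted potential combining $\Phi$, the compositional tracking errors, and the lower-level error, telescoped against the descent produced by the prox step, with the Neumann bias isolated as an additive $O(\delta_g^2)$ term and the variance entering only at order $\tau_k^2$. The one structural difference is the bookkeeping of the gradient-tracking error: you place $C_d\,\E\|d_k-\nabla\Phi(x_k)\|^2$ inside the Lyapunov function, whereas the paper instead includes $-\eta(x_k,d_k,\beta)$ (the optimal value of the prox subproblem, smooth by Lemma~\ref{lem: eta_smooth}) in the merit function \eqref{eq: merit_multi}, and only afterwards bounds $\sum_k\tau_k\E\|d_k-\nabla\Phi(x_k)\|^2$ in a separate weighted-sum lemma (Lemma~\ref{lem: decrease_of_any}) using the quantities already controlled by the telescoping. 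Both routes close; yours requires one extra absorption (the $O(\tau_k)\E\|z_k-x_k\|^2$ drift of $\nabla\Phi$ inside the $E^{(d)}$ recursion must be eaten by the $-\beta\tau_k\E\|z_k-x_k\|^2$ descent, tying $\beta$ to $C_d L_{\nabla\Phi}^2$), while the paper's two-stage version decouples $\beta$ from the gradient-tracking constants; the simultaneous choice of coefficients that you flag as the crux is resolved explicitly in the paper via the positive-semidefiniteness computation of Lemma~\ref{lem: technical_lemma}.

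One claim you should correct: $\E\|\bar r_{M,y}-(\nabla^2_{yy}g)^{-1}r_{0,y}\|^2$ is \emph{not} bounded by $\delta_g^2$ times a bounded quantity --- the fluctuation of $\bar r_{M,y}$ about its conditional mean has variance of order $\alpha\sigma_{H_g}^2\|r_{0,y}\|^2/\mu_g^3$, which does not shrink as $M$ grows. This is precisely the error in the BSA/TTSA/ALSET analyses that the paper isolates in Appendix~\ref{sec: issue}. What is true, and what your argument actually needs, is the split made in Lemmas~\ref{lem: SGD_var}--\ref{lem: hypergrad_exp}: the \emph{bias} $\|\E[\bar r_{M,y}]-(\nabla^2_{yy}g)^{-1}\E[r_{0,y}]\|$ is $O(\delta_g)$, and it enters the $E^{(d)}$ recursion multiplied by $\tau_k$, producing the $\delta_g^2$ in the final rate; the \emph{variance} of $w_{k+1}$ is merely bounded by the $M$-independent constant $\sigma_w^2$, and it enters multiplied by $\tau_k^2$, producing the $\sum_k\tau_k^2/\sum_k\tau_k$ term. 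Your stated recursion for $E^{(d)}$ already uses this correct split, so the slip is localized and does not propagate, but as written that sentence reproduces the very mistake the paper is at pains to avoid.
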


\noindent \textbf{Sample complexity.} If we pick $M = \Theta(\log K)$, and $$\tau_k =\Theta \left(\frac{1}{\sqrt{K}}\right), \gamma_k = \Theta\left(\frac{1}{\sqrt{K}}\right), \alpha =  \frac{1}{2}\cdot\min\left\{\frac{\mu_g}{\mu_g^2 + \sigma_{H_g}^2},\ \frac{1}{L_{\nabla g}}\right\},$$
then the oracle complexity of stochastic gradients, Jacobian-vector products and Hessian-vector products will be $O_T(KM) = O_T(\epsilon^{-2}\log\frac{1}{\epsilon})$ to guarantee $\E\left[V_R\right] = O(\epsilon)$, which matches the rate in \cite{balasubramanian2022stochastic}\footnote{The work of \cite{balasubramanian2022stochastic} considers $\E\left[\sqrt{V_R}\right] = O(\epsilon)$ and obtains $O_T(\epsilon^{-4})$.} if we ignore the log factor. 

\noindent \textbf{Difficulty in removing the $\log$ factor.}
A recent work \cite{dagreou2022framework} introduces \texttt{SOBA}, which removes the $\log$ factor in the sample complexity of stochastic bi-level optimization (i.e., $T=1$ in \eqref{eq:multilevel}) comparing to previous results \cite{chen2021closing}. However, it is unclear if a similar method can be incorporated into our algorithm and its convergence analysis. This is due to the fact that the upper-level function of our problem \eqref{eq:multilevel} has a nested structure, which makes the unbiased estimate of the upper-level gradient unavailable. Note that this bias does not exist when $T=1$ and it causes additional major challenges in directly incorporating  the framework of~\cite{dagreou2022framework} to solve \eqref{eq:multilevel}.

\subsection{Proof of Theorem~\ref{thm:mainresult}}
Before providing the proof of Theorem~\ref{thm:mainresult}, we first need a few simple technical results.

\begin{lemma}\label{lem: ineq_of_seqs}
For a sequence $\{\tau_k\}_{k=0}^{\infty} \in (0,1)$ define $\Gamma_0 = 1,\ \Gamma_{j+1} = \prod_{i=0}^{j}(1-\tau_i)$. Then, for any $j\geq 0$, we have
    \begin{align*}
         \sum_{j=0}^{k}\frac{\tau_j\Gamma_{k+1}}{\Gamma_{j+1}}< 1,\quad \sum_{j=k}^{K}\frac{\tau_j\Gamma_j}{\Gamma_k}< 1\quad \text{for any } 0\leq k\leq K,
    \end{align*}
where $K \ge 0$.
\end{lemma}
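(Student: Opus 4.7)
The plan is to prove both inequalities by recognizing them as telescoping sums in disguise, exploiting the one-step recursion $\Gamma_{j+1}=(1-\tau_j)\Gamma_j$, which gives $\Gamma_j - \Gamma_{j+1} = \tau_j \Gamma_j$. This identity is the engine behind both bounds.

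For the second inequality, I would use the identity directly: dividing through by $\Gamma_k$ yields
\[
\sum_{j=k}^{K}\frac{\tau_j\Gamma_j}{\Gamma_k}=\sum_{j=k}^{K}\frac{\Gamma_j-\Gamma_{j+1}}{\Gamma_k}=\frac{\Gamma_k-\Gamma_{K+1}}{\Gamma_k}=1-\frac{\Gamma_{K+1}}{\Gamma_k}.
\]
Since $\tau_i\in(0,1)$ for all $i$, each factor $(1-\tau_i)$ is strictly positive, so $\Gamma_{K+1}/\Gamma_k>0$ and the bound is strictly less than $1$.

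For the first inequality, the same identity gives a telescoping form after dividing by $\Gamma_j\Gamma_{j+1}$: one checks that
\[
\frac{\tau_j}{\Gamma_{j+1}}=\frac{\Gamma_j-\Gamma_{j+1}}{\Gamma_j\Gamma_{j+1}}=\frac{1}{\Gamma_{j+1}}-\frac{1}{\Gamma_j}.
\]
Multiplying by $\Gamma_{k+1}$ (a constant with respect to the summation index) and summing from $j=0$ to $k$ telescopes to
\[
\sum_{j=0}^{k}\frac{\tau_j\Gamma_{k+1}}{\Gamma_{j+1}}=\Gamma_{k+1}\left(\frac{1}{\Gamma_{k+1}}-\frac{1}{\Gamma_0}\right)=1-\Gamma_{k+1}<1,
\]
using $\Gamma_0=1$ and $\Gamma_{k+1}>0$ in the final step.

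There is no real obstacle here; both inequalities are elementary consequences of a single telescoping identity and the positivity of the partial products $\Gamma_j$ under $\tau_i\in(0,1)$. The only thing to be careful about is stating the strictness of the bound, which uses strict positivity of $\Gamma_{K+1}$ (and of $\Gamma_{k+1}$ in the first bound), guaranteed by the assumption $\tau_i\in(0,1)$ rather than $[0,1]$.
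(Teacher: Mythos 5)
Your proof is correct and is essentially the same telescoping argument as the paper's: both exploit $\Gamma_{j+1}=(1-\tau_j)\Gamma_j$ to collapse each sum to $1-\Gamma_{k+1}$ (resp.\ $1-\Gamma_{K+1}/\Gamma_k$) and then invoke strict positivity of the partial products. The only cosmetic difference is that you telescope the reciprocals $1/\Gamma_j$ while the paper telescopes the tail products $\prod_{i=j}^{k}(1-\tau_i)$; these are the same computation.
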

\begin{proof}
    Note that by definition of $\Gamma_j$, we have
    \[
        \sum_{j=0}^{k}\frac{\tau_j\Gamma_{k+1}}{\Gamma_{j+1}} = \sum_{j=0}^{k}\tau_j\prod_{i=j+1}^{k}(1-\tau_i) = \sum_{j=0}^{k}\left(\prod_{i=j+1}^{k}(1-\tau_i) - \prod_{i=j}^{k}(1-\tau_i)\right) = 1 - \prod_{i=0}^{k}(1-\tau_i),
    \]
    where we set $\prod_{i=k+1}^{k}(1-\tau_i) = 1$ in the first equality. Similarly, we have
    \[
        \sum_{j=k}^{K}\frac{\tau_j\Gamma_j}{\Gamma_k} = \sum_{j=k}^{K}\tau_j \prod_{i=k}^{j-1}(1-\tau_i) = \sum_{j=k}^{K}\left(\prod_{i=k}^{j-1}(1-\tau_i) - \prod_{i=k}^{j}(1-\tau_i)\right) = 1 - \prod_{i=k}^{K}(1-\tau_i).
    \]
    Noting that $\prod_{i=0}^{k}(1-\tau_i)$ and $\prod_{i=k}^{K}(1-\tau_i)$ are both positive and less than $1$, the proof is complete.
\end{proof}

The next lemma provides a tight estimation of weighted sums under certain conditions.
\begin{lemma}\label{lem: decrease_lemma}
    Suppose we are given five sequences $\{a_n\}_{n=1}^{\infty}, \{b_n\}_{n=1}^{\infty}, \{c_n\}_{n=0}^{\infty},$ \\ $\{\tau_n\}_{n=0}^{\infty},$ and $\{\delta_n\}_{n=1}^{\infty}$ satisfying
    \begin{align*}
        &a_{k+1}\leq \delta_k a_k + b_{k},\ \Gamma_0 = 1,\ \Gamma_{k} = \prod_{l=1}^{k}\delta_l,\ \sum_{k=i}^{K}\tau_k\Gamma_k\leq c_i\Gamma_i,\\
        &a_k\geq 0,\ b_k\geq 0,\ c_i\geq 0,\ \tau_i \geq 0,\ 0\leq \delta_k < 1,
    \end{align*}
    for all $k=1,2,...$ and $i=0,1,..$. Then, for any $K > 0$, we have
    \begin{equation}
        a_{k+1}\leq a_1\Gamma_{k} + \sum_{i=1}^{k}\frac{b_i\Gamma_k}{\Gamma_i},\qquad \sum_{k=0}^{K}\tau_ka_{k+1}\leq c_0a_1 + \sum_{i=1}^{K}c_i b_i.\label{sum_ak}
    \end{equation}
\end{lemma}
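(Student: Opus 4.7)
My plan is to prove the two claims in order, using the first as the main ingredient for the second. Both proofs are of the classical discrete Gronwall/Bellman flavor, so the core technique is to normalize the recursion by dividing through by the accumulated product $\Gamma_k$ and then either telescope or swap the order of summation.

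For the first inequality $a_{k+1}\le a_1\Gamma_k+\sum_{i=1}^k b_i\Gamma_k/\Gamma_i$, I would start from the given one-step recursion $a_{k+1}\le \delta_k a_k+b_k$ and divide both sides by $\Gamma_k=\prod_{l=1}^k\delta_l$. Since $\delta_k/\Gamma_k=1/\Gamma_{k-1}$, this yields the telescoping inequality
\[
\frac{a_{k+1}}{\Gamma_k}-\frac{a_k}{\Gamma_{k-1}}\le\frac{b_k}{\Gamma_k}.
\]
Summing from $k=1$ to the index of interest and using $\Gamma_0=1$ gives $a_{k+1}/\Gamma_k\le a_1+\sum_{i=1}^{k}b_i/\Gamma_i$, and multiplying back by $\Gamma_k$ yields the first claim. (The $k=0$ case is trivial because the summation over $i$ is empty and $\Gamma_0=1$.) An equivalent route is a one-line induction on $k$, which would be my fallback if I needed to avoid fractions when some $\delta_l$ could vanish; since the hypothesis is $0\le\delta_k<1$ rather than $\delta_k>0$, I would mention that the division argument is still valid whenever $\Gamma_k>0$, and handle $\Gamma_k=0$ separately by noting both sides of the inequality are automatically well-behaved after a direct unrolling.

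For the second inequality, I would substitute the first bound into $\sum_{k=0}^K \tau_k a_{k+1}$ to get
\[
\sum_{k=0}^{K}\tau_k a_{k+1}\le a_1\sum_{k=0}^{K}\tau_k\Gamma_k+\sum_{k=0}^{K}\tau_k\sum_{i=1}^{k}\frac{b_i\Gamma_k}{\Gamma_i}.
\]
The first term is bounded by $c_0 a_1$ directly from the hypothesis $\sum_{k=i}^{K}\tau_k\Gamma_k\le c_i\Gamma_i$ applied at $i=0$ (using $\Gamma_0=1$). For the second term, I would swap the order of summation to rewrite it as $\sum_{i=1}^{K}(b_i/\Gamma_i)\sum_{k=i}^{K}\tau_k\Gamma_k$, and then apply the same hypothesis at each $i$ to collapse the inner sum to $c_i\Gamma_i$, leaving $\sum_{i=1}^{K}c_i b_i$.

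I do not expect a serious obstacle here; the only points to be careful about are (i) the boundary bookkeeping at $k=0$ (the empty sum convention and $\Gamma_0=1$), (ii) ensuring the telescoping step is valid under the assumption $\delta_k<1$ (rather than $\delta_k>0$), and (iii) making the index swap rigorous by noting that $\{(k,i):0\le i\le k\le K,\ i\ge 1\}=\{(k,i):1\le i\le K,\ i\le k\le K\}$. Once these routine items are addressed, the two claims follow in a few lines each.
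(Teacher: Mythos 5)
Your proposal is correct and follows essentially the same route as the paper: divide the recursion by $\Gamma_k$ to telescope for the first bound, then substitute and swap the order of summation, applying the hypothesis $\sum_{k=i}^{K}\tau_k\Gamma_k\le c_i\Gamma_i$ at each $i$ for the second. Your extra care about the degenerate case $\Gamma_k=0$ (allowed since only $0\le\delta_k<1$ is assumed) is a minor refinement the paper glosses over, but it does not change the argument.
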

\begin{proof}
    First note that by the definition of $\Gamma_k$, for any $k\geq 1$, we have
    \[
        \frac{a_{k+1}}{\Gamma_{k}} \leq \frac{a_k}{\Gamma_{k-1}} + \frac{b_{k}}{\Gamma_{k}},
    \]
which implies that
    \[
        a_{k+1}\leq 
        a_1\Gamma_{k} + \sum_{i=1}^{k}\frac{b_i\Gamma_k}{\Gamma_i}.
    \]
    By denoting $\sum_{i=1}^{0}\frac{b_i\Gamma_k}{\Gamma_i} = 0$, we obtain
    \begin{align*}
        &\sum_{k=0}^{K}\tau_ka_{k+1}\leq \sum_{k=0}^{K}a_1\tau_k\Gamma_{k} + \sum_{k=0}^{K}\sum_{i=1}^{k}\frac{b_i\tau_k\Gamma_k}{\Gamma_i} = \sum_{k=0}^{K}a_1\tau_k\Gamma_{k} + \sum_{i=1}^{K}\sum_{k=i}^{K}\frac{b_i\tau_k\Gamma_k}{\Gamma_i} \\
        \leq &a_1\sum_{k=0}^{K}\tau_k\Gamma_{k} + \sum_{i=1}^{K}\frac{b_i}{\Gamma_i}\sum_{k=i}^{K}\tau_k\Gamma_k \le c_0a_1 + \sum_{i=1}^{K}c_i b_i,
    \end{align*}
    where the last inequality holds due to the fact that $\sum_{k=i}^{K}\tau_k\Gamma_k\leq c_i\Gamma_i$.
\end{proof}
{\bf Remark:} By choosing $\delta_k \equiv \delta \in (0, 1)$ and $c_i = \frac{\tau_i}{1-\delta}$ with ${\tau_i}$ being a decreasing sequence, 
we know
\[
    \sum_{k=i}^K \tau_k\Gamma_k = \sum_{k=i}^K\tau_k\delta^k\leq \tau_i\delta^i\cdot \left(\sum_{k=i}^{K}\delta^{k-i}\right)\leq c_i\Gamma_i.
\]
Furthermore, if $a_k, b_k$ and $c_k$ are chosen such that other conditions are satisfied,  we know by \eqref{sum_ak} that
\begin{align}
    \sum_{k=0}^{K}\tau_ka_{k+1} &\leq \frac{\tau_0 a_1}{1-\delta} + \sum_{i=1}^{K}\frac{\tau_i b_i}{1-\delta},\notag \\ 
    a_{k+1}^2&\leq \left(a_1\delta^k + \sum_{i=1}^{k}b_i\delta^{k-i}\right)^2
    \leq \left(\delta^k + \sum_{i=1}^{k}\delta^{k-i}\right)\left(a_1^2\delta^k + \sum_{i=1}^{k}b_i^2\delta^{k-i}\right)\nonumber \\
    &<\frac{1}{1-\delta}\left(a_1^2\delta^{k} + \sum_{i=1}^{k}b_i^2\delta^{k-i}\right),\nonumber
\end{align}
which, also gives
\begin{equation}\label{ineq: decrease_coro2}
    \sum_{k=0}^{K}\tau_ka_{k+1}^2\leq \frac{\tau_0a_1^2}{(1-\delta)^2} + \sum_{i=1}^{K}\frac{\tau_ib_i^2}{(1-\delta)^2}.
\end{equation}
We also need the following standard result in stochastic optimization.
\begin{lemma}[Lemma 10 in \cite{qu2017harnessing}]\label{lem: gd_decrease}
    Suppose $f(x)$ is $\mu$-strongly convex and $L-smooth$. For any $x$ and $\gamma<\frac{2}{\mu + L}$, define $x^+ = x - \gamma\nabla f(x),\ x^*=\argmin f(x)$. Then we have $\|x^+ - x^*\| \leq (1-\gamma\mu)\|x-x^*\|$.
\end{lemma}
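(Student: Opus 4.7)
The plan is to reduce the contraction claim to a spectral bound on an averaged-Hessian operator. First I would exploit the optimality condition $\nabla f(x^*)=0$ to rewrite the gradient step as $x^+-x^*=(x-x^*)-\gamma\bigl(\nabla f(x)-\nabla f(x^*)\bigr)$. Then, using the fundamental theorem of calculus along the segment from $x^*$ to $x$, I would express
\[
\nabla f(x)-\nabla f(x^*)=H(x-x^*),\qquad H\;:=\;\int_{0}^{1}\nabla^{2} f\bigl(x^{*}+t(x-x^{*})\bigr)\,dt,
\]
so that $x^+-x^*=(I-\gamma H)(x-x^*)$. By $\mu$-strong convexity and $L$-smoothness, every $\nabla^{2} f(z)$ is symmetric with spectrum in $[\mu,L]$, so the averaged operator $H$ is symmetric and satisfies $\mu I\preceq H\preceq L I$. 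Consequently $\|x^{+}-x^{*}\|\le \|I-\gamma H\|_{\mathrm{op}}\,\|x-x^{*}\|$, and the task reduces to bounding the operator norm of $I-\gamma H$ by $1-\gamma\mu$.

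Because $H$ is symmetric with eigenvalues in $[\mu,L]$, the eigenvalues of $I-\gamma H$ lie in $[1-\gamma L,\,1-\gamma\mu]$, hence
\[
\|I-\gamma H\|_{\mathrm{op}}=\max\bigl(|1-\gamma\mu|,\,|1-\gamma L|\bigr).
\]
The stepsize condition $\gamma<2/(\mu+L)$ together with $\mu\le L$ yields $\gamma\mu<2\mu/(\mu+L)\le 1$, so $1-\gamma\mu>0$ and $|1-\gamma\mu|=1-\gamma\mu$. It then suffices to verify $|1-\gamma L|\le 1-\gamma\mu$: if $\gamma L\le 1$ this is immediate from $\mu\le L$, and if $\gamma L>1$ it rearranges to $\gamma(\mu+L)\le 2$, which is precisely our hypothesis. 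Combining, $\|I-\gamma H\|_{\mathrm{op}}\le 1-\gamma\mu$, which concludes the proof.

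The only (mild) obstacle is that the statement does not explicitly assume twice differentiability, which I used to form the averaged Hessian $H$. In every application in this paper the lemma is invoked on the lower-level function $g$, which Assumption~\ref{assump: g} declares $C^{2}$, so the integral representation is legitimate. A purely $C^{1}$ version could instead be attempted via the co-coercivity inequality $\<x-x^{*},\nabla f(x)-\nabla f(x^{*})>\ge \tfrac{\mu L}{\mu+L}\|x-x^{*}\|^{2}+\tfrac{1}{\mu+L}\|\nabla f(x)-\nabla f(x^{*})\|^{2}$, but that route produces only $\|x^{+}-x^{*}\|^{2}\le \bigl(1-\tfrac{2\gamma\mu L}{\mu+L}\bigr)\|x-x^{*}\|^{2}$, which does not recover the tight factor $(1-\gamma\mu)$ across the whole admissible range $\gamma<2/(\mu+L)$. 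Hence the averaged-Hessian spectral approach is the natural path to the claimed contraction.
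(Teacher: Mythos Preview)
Your argument is correct. The paper itself does not prove this lemma---it is quoted as Lemma~10 of \cite{qu2017harnessing} and invoked without proof---so there is no in-paper argument to compare against. The averaged-Hessian spectral approach you take (writing $x^{+}-x^{*}=(I-\gamma H)(x-x^{*})$ and bounding $\|I-\gamma H\|_{\mathrm{op}}$ via the eigenvalue range $[\mu,L]$) is the standard route to this contraction estimate and is essentially the argument in the cited reference. Your caveat about twice differentiability is accurate and, as you note, immaterial here since the lemma is applied only to $g(x_k,\cdot)$, which Assumption~\ref{assump: g} declares $C^{2}$.
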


The next result establishes the smoothness of the optimal value of the subproblem solved at Step 2 of Algorithm~\ref{algo: bilinasa}.
\begin{lemma}[\cite{ghadimi2020single}]\label{lem: eta_smooth}
    Define $\eta(x, d, \beta) = \min_{y\in X}\left\{\<z,y-x> + \frac{\beta}{2}\|y-x\|^2\right\}.$
    Then $\nabla \eta$ is $L_{\nabla\eta}$-Lipschitz continuous with the constant given by \[
    L_{\nabla\eta} = 2 \sqrt{\left(1+\beta\right)^2 + \left(1 + \frac{1}{2\beta}\right)^2}.\]
\end{lemma}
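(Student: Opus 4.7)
\begin{hproof}
The plan is to compute $\nabla\eta$ explicitly via Danskin's envelope theorem and then quantify the variation of each partial derivative using nonexpansivity of the Euclidean projection onto $X$. I read the lemma as asserting Lipschitz continuity of $\nabla\eta$ jointly in $(x,d)$ with $\beta>0$ fixed (the statement appears to use $z$ and $d$ interchangeably for the linear term).

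First I would note that, since the inner objective $\phi(x,d,y):=\langle d,y-x\rangle+\frac{\beta}{2}\|y-x\|^2$ is $\beta$-strongly convex in $y$ on the closed convex set $X$, the minimizer $y^*(x,d):=\argmin_{y\in X}\phi(x,d,y)$ is unique and, by completing the square, admits the closed form $y^*(x,d)=\Pi_X(x-d/\beta)$. Smoothness of $\phi$ together with uniqueness of $y^*$ then lets me invoke Danskin's theorem, which yields
\[
\nabla_x\eta(x,d)= -d - \beta(y^*(x,d)-x),\qquad \nabla_d\eta(x,d)= y^*(x,d)-x.
\]

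Second, I would exploit the identity $\nabla_x\eta(x,d)=\beta(v-\Pi_X(v))$ with $v:=x-d/\beta$ together with firm nonexpansivity of $I-\Pi_X$ (hence $1$-Lipschitz) to conclude
\[
\|\nabla_x\eta(x_1,d_1)-\nabla_x\eta(x_2,d_2)\|\le \beta\|v_1-v_2\|\le \beta\|x_1-x_2\|+\|d_1-d_2\|,
\]
and, after rewriting $\nabla_d\eta=-(v-\Pi_X(v))-d/\beta$,
\[
\|\nabla_d\eta(x_1,d_1)-\nabla_d\eta(x_2,d_2)\|\le \|v_1-v_2\|+\tfrac{1}{\beta}\|d_1-d_2\|\le \|x_1-x_2\|+\tfrac{2}{\beta}\|d_1-d_2\|.
\]

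Finally I would square the two componentwise bounds, sum them, and apply the two-dimensional Cauchy--Schwarz inequality $(au+bv)^2\le (a^2+b^2)(u^2+v^2)$ to each to produce a single bound on $\|\nabla\eta(x_1,d_1)-\nabla\eta(x_2,d_2)\|$ in terms of $\sqrt{\|x_1-x_2\|^2+\|d_1-d_2\|^2}$; regrouping the resulting coefficients in the form used in \cite{ghadimi2020single} yields the stated constant $L_{\nabla\eta}=2\sqrt{(1+\beta)^2+(1+\tfrac{1}{2\beta})^2}$. The hard part is bookkeeping rather than conceptual: the underlying content is just Danskin plus nonexpansivity of $\Pi_X$, so the care lies in choosing how to split the mixed norms $\|\beta\Delta x-\Delta d\|$ and $\|\Delta x-\Delta d/\beta\|$ before applying Cauchy--Schwarz in order to match the precise prefactor rather than a looser constant of the same order in $\beta$.
\end{hproof}
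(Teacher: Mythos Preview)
The paper does not supply a proof of this lemma; it simply cites \cite{ghadimi2020single}. Your sketch is a correct and complete argument: the identification $y^*(x,d)=\Pi_X(x-d/\beta)$, the Danskin computation of $\nabla_x\eta$ and $\nabla_d\eta$, and the use of nonexpansivity of $I-\Pi_X$ are all valid, and they yield a Lipschitz bound for $\nabla\eta$ that is in fact \emph{strictly smaller} than the stated $L_{\nabla\eta}$ (for instance, your componentwise estimates plus Cauchy--Schwarz give $L^2\le \beta^2+2+4/\beta^2$, whereas the stated constant squares to $4\beta^2+8\beta+8+4/\beta+1/\beta^2$). Since the lemma only asserts that $L_{\nabla\eta}$ is \emph{a} valid Lipschitz constant, your tighter bound certainly proves it.

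The one inaccuracy is the last sentence: ``regrouping'' will not reproduce the exact constant $2\sqrt{(1+\beta)^2+(1+\tfrac{1}{2\beta})^2}$ from your inequalities, because your bounds are already sharper. Reverse-engineering the stated constant, the derivation in \cite{ghadimi2020single} presumably does \emph{not} pass through the identity $\nabla_x\eta=\beta(v-\Pi_X(v))$; instead it bounds $\nabla_x\eta=-d-\beta(y^*-x)$ and $\nabla_d\eta=y^*-x$ term by term using only nonexpansivity of $\Pi_X$ (not of $I-\Pi_X$), obtaining $\|\Delta\nabla_x\eta\|\le 2\beta\|\Delta x\|+2\|\Delta d\|$ and $\|\Delta\nabla_d\eta\|\le 2\|\Delta x\|+\tfrac{1}{\beta}\|\Delta d\|$, and then uses $au+bv\le(a+b)\sqrt{u^2+v^2}$ to arrive at $(2+2\beta)^2+(2+\tfrac{1}{\beta})^2=L_{\nabla\eta}^2$. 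So your route is genuinely a refinement, not merely a rearrangement, of the original argument.
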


To establish the convergence of Algorithm~\ref{algo: bilinasa}, we first analyze the convergence of its inner loop updates in the next lemma.
\begin{lemma}\label{lem: inner_analysis}
Let the sequence $y_k^{(t)}$ be generated by Algorithm \ref{algo: bilinasa} and denote $y_k^* := y^*(x_k)$. Suppose that Assumption \ref{assump: g} holds and stepsizes satisfy \eqref{assump: stepsize}. If $\gamma_k<\frac{2}{\mu + L}$, for any $k\geq 1$, the inner loop updates, for any $0\leq t\leq N$, satisfy
    \begin{align}
            &\E\left[\|y_{k+1}^{(N)} - y_k^{(N)}\|^2\right]\label{ineq: inner_part1_2} \\
            &\leq N^2\gamma_{k+1}^2\sigma_v^2
            + \min\left\{N\gamma_{k+1},\ \frac{1}{\mu_g}\right\}NL_{\nabla g}\gamma_{k+1}\E\left[\|y_{k+1}^{(0)} - y_{k+1}^*\|^2\right] + \frac{N^3L_{\nabla g}\gamma_{k+1}^4\sigma_v^2}{2}.\notag
    \end{align}
    Moreover, if $N\geq (1 + \frac{1}{1-\tau_1})\frac{\tau_k}{2\gamma_k\mu_g}$, we have
    \begin{align}
        \E\left[\|y_{k}^{(0)} - y_k^*\|^2\right] &\leq \frac{\Gamma_k}{\Gamma_1}\E\left[\|y_{1}^{(0)} - y_1^*\|^2\right] + \frac{2\sigma_v^2c_{\gamma}}{\mu_g} + 2L_{y^*}^2\max_{1\leq i\leq k}\E\left[\|x_k-z_k\|^2\right]\label{ineq: inner_part2_1} \\
            \sum_{k=1}^{K}\tau_{k}\E\left[\|y_k^{(0)} - y_k^*\|^2\right] &\le \E\left[\|y_1^{(0)} - y_1^*\|^2\right] + 2 \sum_{k=1}^{K}\tau_k \Big(N\sigma_v^2c_{\gamma}^2\tau_k+ L_{y^*}^2\E\left[\|x_k-z_k\|^2\right] \Big). \label{ineq: inner_part2_2}
    \end{align}
\end{lemma}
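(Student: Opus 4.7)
The plan is to establish a one-step SGD contraction for the inner loop and then apply it in two different ways: directly to control the inter-iterate displacement of $y$ in \eqref{ineq: inner_part1_2}, and recursively across outer iterations to control the optimization error in \eqref{ineq: inner_part2_1}--\eqref{ineq: inner_part2_2}. Since $g(x_{k+1},\cdot)$ is $\mu_g$-strongly convex and $L_{\nabla g}$-smooth by Assumption \ref{assump: g}, combining Lemma \ref{lem: gd_decrease} applied to the deterministic step with the unbiasedness and variance bound on $v_{k+1}^{(t)}$ from Assumption \ref{assump: stochastic_oracles} yields
\[\E\left[\|y_{k+1}^{(t+1)} - y_{k+1}^*\|^2 \,\middle|\, \G_{k+1}^{(t)}\right] \leq (1-\gamma_{k+1}\mu_g)^2 \|y_{k+1}^{(t)} - y_{k+1}^*\|^2 + \gamma_{k+1}^2 \sigma_v^2.\]
Iterating over $t = 0,\ldots,N-1$ gives the standard bound $\E\|y_{k+1}^{(N)} - y_{k+1}^*\|^2 \leq (1-\gamma_{k+1}\mu_g)^{2N}\E\|y_{k+1}^{(0)} - y_{k+1}^*\|^2 + \sigma_v^2 R_{k+1}$, where $R_{k+1}$ can be taken as either $\gamma_{k+1}/\mu_g$ (from the geometric-series bound on $\sum_{i}(1-\gamma_{k+1}\mu_g)^{2i}$, used for \eqref{ineq: inner_part2_1}) or $N\gamma_{k+1}^2$ (cruder, used for \eqref{ineq: inner_part2_2}).

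For \eqref{ineq: inner_part1_2}, I would use $y_{k+1}^{(0)} = y_k^{(N)}$ to rewrite $y_{k+1}^{(N)} - y_k^{(N)} = -\gamma_{k+1}\sum_{t=0}^{N-1} v_{k+1}^{(t)}$, apply Jensen's inequality to pull out a factor of $N$, and split each $v_{k+1}^{(t)}$ into its mean $\nabla_y g(x_{k+1}, y_{k+1}^{(t)})$ plus its zero-mean noise. The variance bound on the noise produces the leading $N^2\gamma_{k+1}^2\sigma_v^2$ term. For the deterministic part I would apply $\|\nabla_y g(x_{k+1},y_{k+1}^{(t)})\| \leq L_{\nabla g}\|y_{k+1}^{(t)}-y_{k+1}^*\|$ and sum the iterated contraction via $\sum_{t=0}^{N-1}(1-\gamma_{k+1}\mu_g)^{2t} \leq \min\{N, 1/(\gamma_{k+1}\mu_g)\}$, producing the $\min\{N\gamma_{k+1}, 1/\mu_g\}$ factor on $\E\|y_{k+1}^{(0)}-y_{k+1}^*\|^2$ and the residual $N^3 L_{\nabla g}\gamma_{k+1}^4\sigma_v^2/2$ from the SGD noise floor inside that sum.

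For \eqref{ineq: inner_part2_1}, I would split $\|y_{k+1}^{(0)} - y_{k+1}^*\|^2 = \|y_k^{(N)} - y_{k+1}^*\|^2$ by a Young's inequality with a tuned parameter $\rho_k = \tau_k/(1-\tau_1)$:
\[\|y_k^{(N)} - y_{k+1}^*\|^2 \leq (1+\rho_k)\|y_k^{(N)} - y_k^*\|^2 + (1+1/\rho_k)\|y_k^* - y_{k+1}^*\|^2.\]
The second term is controlled via $\|y_k^*-y_{k+1}^*\|\leq L_{y^*}\tau_k\|z_k-x_k\|$ from Lemma \ref{lem: bi-level_smoothness}, and the bound $(1+1/\rho_k)\tau_k^2 \leq 2\tau_k$ turns it into $2L_{y^*}^2\tau_k\|z_k-x_k\|^2$. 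The first term is handled by the iterated SGD contraction above; the condition $N \geq (1+1/(1-\tau_1))\tau_k/(2\gamma_k\mu_g)$ is precisely calibrated so that $(1+\rho_k)(1-\gamma_k\mu_g)^{2N} \leq 1-\tau_k$, yielding the clean recursion $a_{k+1} \leq (1-\tau_k) a_k + b_k$ with $a_k = \E\|y_k^{(0)}-y_k^*\|^2$ and $b_k$ collecting the two noise contributions. Unrolling and invoking $\sum_i \tau_i\Gamma_{k+1}/\Gamma_{i+1} \leq 1$ from Lemma \ref{lem: ineq_of_seqs} (with $\gamma_i\leq c_\gamma\tau_i$ absorbing the noise term and $\tau_i^2 \leq \tau_i$ absorbing the Lipschitz term into the $\max_i$) gives the constants $2c_\gamma\sigma_v^2/\mu_g$ and $2L_{y^*}^2\max_i\E\|z_i-x_i\|^2$ in the statement. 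For \eqref{ineq: inner_part2_2}, I would rewrite the same recursion as $\tau_k a_k \leq a_k - a_{k+1} + b_k$ and telescope, now using the cruder $R_k = N\gamma_k^2 \leq Nc_\gamma^2\tau_k^2$ so that the noise contribution matches the quadratic-in-$\tau_k$ term in the target inequality.

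The main obstacle is the Young-parameter choice in the contraction step: the naive $2\|a\|^2+2\|b\|^2$ split would demand $(1-\gamma_k\mu_g)^{2N} \leq (1-\tau_k)/2$, forcing a logarithmic $N$-condition incompatible with the one stated. Choosing $\rho_k = \tau_k/(1-\tau_1)$ is what produces exactly the $(2-\tau_1)/(1-\tau_1) = 1 + 1/(1-\tau_1)$ factor in the $N$-condition and balances the SGD contraction, the Lipschitz shift in $y^*$, and the target $(1-\tau_k)$-geometric unrolling. Once this calibration is in place, the remaining steps are routine bookkeeping through Lemma \ref{lem: ineq_of_seqs}.
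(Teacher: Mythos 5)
Your proposal follows the paper's proof essentially step for step: the one-step contraction from Lemma~\ref{lem: gd_decrease} iterated over $t$, the decomposition $y_{k+1}^{(N)}-y_{k+1}^{(0)}=-\gamma_{k+1}\sum_{t}v_{k+1}^{(t)}$ with the mean/noise split for \eqref{ineq: inner_part1_2}, and the Young-plus-$L_{y^*}$-Lipschitz recursion (with the tight noise floor $\gamma_k\sigma_v^2/\mu_g$ for \eqref{ineq: inner_part2_1} and the crude $N\gamma_k^2\sigma_v^2$ for \eqref{ineq: inner_part2_2}) unrolled via Lemmas~\ref{lem: ineq_of_seqs} and~\ref{lem: decrease_lemma}. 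The one divergence is your Young parameter $\rho_k=\tau_k/(1-\tau_1)$: the obvious chain $1+\rho_k\le e^{\rho_k}$ only yields $(1+\rho_k)(1-\gamma_k\mu_g)^{2N}\le e^{-\tau_k}$, which is \emph{larger} than $1-\tau_k$, so your ``precise calibration'' actually requires the finer inequality $\log(1+\rho_k)-\rho_k\le\log(1-\tau_k)+\tau_k$ (which does hold because $\rho_k\ge\tau_k/(1-\tau_k)$ when $\tau_k\le\tau_1$, but needs to be checked); the paper sidesteps this by taking the Young parameter to be $\tau_k$ itself and spending the $1/(1-\tau_1)$ slack in the $N$-condition on the bound $1-\tau_k\ge e^{-\tau_k/(1-\tau_k)}$, which closes the contraction in one line.
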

\begin{proof}
By definition of the $\sigma$-algebra $\G_k^{(t)}$, the update rule of $y_k^{(t)}$ in Step 7 of Algorithm \ref{algo: bilinasa}, and under Assumption~\ref{assump: stochastic_oracles}, we have
    \begin{align*}
        &\E\left[\|y_k^{(t+1)} - y_k^*\|^2|\G_k^{(t)}\right] \\
        = &\E\left[\|y_k^{(t)} - \gamma_k\nabla_yg(x_k, y_k^{(t)}) - y_k^*+ \gamma_k(\nabla_yg(x_k,y_k^{(t)}) - v_k^{(t)})\|^2|\G_k^{(t)}\right] \\
        = &\|y_k^{(t)} - \gamma_k\nabla_yg(x_k, y_k^{(t)}) - y_k^*\|^2 + \gamma_k^2\E\left[\|\nabla_yg(x_k,y_k^{(t)}) - v_k^{(t)}\|^2|\G_k^{(t)}\right] \\
        \leq & (1-\gamma_k\mu_g)^2\|y_k^{(t)} - y_k^*\|^2 + \gamma_k^2\sigma_v^2,
    \end{align*}
where the inequality follows from Lemma \ref{lem: gd_decrease}. Taking expectation from both sides of the above inequality, we obtain 
    $\E\left[\|y_k^{(t+1)} - y_k^*\|^2\right]\leq (1-\gamma_k\mu_g)^2\E\left[\|y_k^{(t)} - y_k^*\|^2\right] + \gamma_k^2\sigma_v^2$, implying that
        \begin{align}
            \E\left[\|y_k^{(t)} - y_k^*\|^2\right]&\leq (1-\gamma_k\mu_g)^{2t}\E\left[\|y_k^{(0)} - y_k^*\|^2\right] + \gamma_k^2\sigma_v^2\sum_{i=0}^{t-1}(1-\gamma_k\mu_g)^{2i} \notag\\ 
            &\leq (1-\gamma_k\mu_g)^{2t}\E\left[\|y_k^{(0)} - y_k^*\|^2\right] + \min\left\{t\gamma_k^2\sigma_v^2, \frac{\gamma_k\sigma_v^2}{\mu_g}\right\},\label{ineq: inner_part1_1}
        \end{align}
    where the second inequality follows from the fact that 
    \begin{equation}\label{sum_delta}
        \sum_{i=0}^{t-1}(1-\gamma_k\mu_g)^{2i} \leq \sum_{i=0}^{t-1} (1-\gamma_k\mu_g)^i \leq \frac{1}{\gamma_k\mu_g}.
    \end{equation}
    Now, observe that
    \begin{align*}
        \|y_{k+1}^{(N)} - y_k^{(N)}\|^2 = \|y_{k+1}^{(N)} - y_{k+1}^{(0)}\|^2 = \Big\|\sum_{t=0}^{N-1}(y_{k+1}^{(t+1)} - y_{k+1}^{(t)})\Big\|^2 = \gamma_{k+1}^2\Big\|\sum_{t=0}^{N-1}v_{k+1}^{(t)}\Big\|^2 
        \leq N\gamma_{k+1}^2\sum_{t=0}^{N-1}\Big\|v_{k+1}^{(t)}\Big\|^2,
    \end{align*}
    which together with the fact that $\nabla_yg(x_{k+1},y_{k+1}^*) = 0$ and under Assumption~\ref{assump: stochastic_oracles},
    imply that
    \begin{align*}
        &\E\left[\|y_{k+1}^{(N)} - y_k^{(N)}\|^2|\G_{k+1}^{(N-1)}\right] \\
        \leq &N\gamma_{k+1}^2\sum_{t=0}^{N-1}\left\{\E\left[\|v_{k+1}^{(t)}-\nabla_y g(x_{k+1},y_{k+1}^{(t)})\|^2|\G_{k+1}^{(t)}\right] + \|\nabla_y g(x_{k+1}, y_{k+1}^{(t)})\|^2 \right\} \\
        \leq & N^2\gamma_{k+1}^2\sigma_v^2 + N\gamma_{k+1}^2\sum_{t=0}^{N-1}\|\nabla_yg(x_{k+1},y_{k+1}^{(t)}) - \nabla_y g(x_{k+1},y_{k+1}^*)\|^2\\
        \leq &N^2\gamma_{k+1}^2\sigma_v^2 + NL^2_{\nabla g}\gamma_{k+1}^2\sum_{t=0}^{N-1}\|y_{k+1}^{(t)} - y_{k+1}^*\|^2.
    \end{align*}
    Taking expectation on both sides and noting \eqref{ineq: inner_part1_1}, we obtain
    \begin{align*}
        &\E\left[\|y_{k+1}^{(N)} - y_k^{(N)}\|^2\right]\leq N^2\gamma_{k+1}^2\sigma_v^2 + NL^2_{\nabla g}\gamma_{k+1}^2\sum_{t=0}^{N-1}\left\{(1-\gamma_{k+1}\mu_g)^{2t}\E\left[\|y_{k+1}^{(0)} - y_{k+1}^*\|^2\right] + t\gamma_{k+1}^2\sigma_v^2\right\}
    \end{align*}
which together with \eqref{sum_delta}, imply \eqref{ineq: inner_part1_2}.

To show \eqref{ineq: inner_part2_2}, we first consider the decrease of $\E\left[\|y_k^{(0)} - y_k^*\|^2\right]$. Noting \eqref{ineq: inner_part1_1}, the fact that $y^*(x)$ is $L_{y^*}$-Lipschitz continuous due to Lemma \ref{lem: bi-level_smoothness}, and Step 3 of Algorithm~\ref{algo: bilinasa}, we have
    \begin{equation}\label{ineq: y_k0_decrease}
        \begin{aligned}
            &\E\left[\|y_{k+1}^{(0)} - y_{k+1}^*\|^2\right] = \E\left[\|y_{k}^{(N)} - y_{k}^* + y_{k}^* - y_{k+1}^*\|^2\right] \\
            \leq &\E\left[(1+\tau_k)\|y_{k}^{(N)} - y_{k}^*\|^2 + \left(1+\frac{1}{\tau_k}\right)\|y_k^* - y_{k+1}^*\|^2\right] \\
            \leq &(1+\tau_k) \left[(1-\gamma_k\mu_g)^{2N}\E\left[\|y_{k}^{(0)} - y_{k}^*\|^2\right] + N\gamma_{k}^2\sigma_v^2 \right]+ (\tau_k+\tau_k^2)L_{y^*}^2\E\left[\|x_{k}-z_{k}\|^2\right].
        \end{aligned}
    \end{equation}
   If we set $N\geq (1 + \frac{1}{1-\tau_1})\frac{\tau_k}{2\gamma_k\mu_g}$, we have
    \[
        (1+\tau_k)(1-\gamma_k\mu_g)^{2N} = e^{\log(1+\tau_k) + 2N\log(1-\mu_g\gamma_k)}\leq e^{\tau_k - 2N\mu_g\gamma_k}\leq e^{\frac{-\tau_k}{1-\tau_k}}\leq 1-\tau_k,
    \]
    where the first and third inequality follow from the fact that $\frac{x}{1+x}\leq \log(1+x)\leq x$ for any $x>-1$, and the second inequality follows from $N \geq (1 + \frac{1}{1-\tau_1})\frac{\tau_k}{2\gamma_k\mu_g}\geq (1 + \frac{1}{1-\tau_k})\frac{\tau_k}{2\gamma_k\mu_g}$. The above observation together with \eqref{ineq: y_k0_decrease} imply that
    \begin{align}
        \E\left[\|y_{k+1}^{(0)} - y_{k+1}^*\|^2\right] - \E\left[\|y_{k}^{(0)} - y_{k}^*\|^2\right]\leq -\tau_k\E\left[\|y_{k}^{(0)} - y_{k}^*\|^2\right] + 2N\gamma_{k}^2\sigma_v^2 + 2L_{y^*}^2\tau_{k}\E\left[\|x_{k}-z_{k}\|^2\right].\label{ineq: delta_inner_multi_original}
    \end{align}
    Taking summation on both sides and using \ref{assump: stepsize}, we obtain \eqref{ineq: inner_part2_2}. To prove \eqref{ineq: inner_part2_1}, we
    first notice that in \eqref{ineq: y_k0_decrease} we use another upper bound in \eqref{ineq: inner_part1_1} and follow the same process of proving the above inequality we may get:
    \begin{align*}
        &\E\left[\|y_{k+1}^{(0)} - y_{k+1}^*\|^2\right] \\
        \leq &(1+\tau_k)\delta_{k}^N\E\left[\|y_{k}^{(0)} - y_{k}^*\|^2\right] + (1+\tau_k)\frac{\gamma_k\sigma_v^2}{\mu_g} + (\tau_{k} + \tau_{k}^2)L_{y^*}^2\E\left[\|x_{k}-z_{k}\|^2\right] \\
        \leq &(1-\tau_k)\E\left[\|y_{k}^{(0)} - y_{k}^*\|^2\right] + \frac{2\gamma_k\sigma_v^2}{\mu_g} + 2L_{y^*}^2\tau_k\E\left[\|x_k-z_k\|^2\right].
    \end{align*}
    In the view of Lemma \ref{lem: decrease_lemma}, we have
    \begin{align*}
        &\frac{\E\left[\|y_{k+1}^{(0)} - y_{k+1}^*\|^2\right]}{\Gamma_{k+1}}\\
        \leq &\frac{\E\left[\|y_{1}^{(0)} - y_{1}^*\|^2\right]}{\Gamma_1} + \frac{2\sigma_v^2}{\mu_g}\sum_{i=1}^{k}\frac{\gamma_i}{\Gamma_{i+1}} + 2L_{y^*}^2\sum_{i=1}^{k}\frac{\tau_i}{\Gamma_{i+1}}\E\left[\|x_i-z_i\|^2\right] \\
            \leq &\frac{\E\left[\|y_{1}^{(0)} - y_{1}^*\|^2\right]}{\Gamma_{1}} + \frac{2\sigma_v^2c_{\gamma}}{\mu_g\Gamma_{k+1}} + \frac{2L_{y^*}^2}{\Gamma_{k+1}}\max_{1\leq i\leq k}\E\left[\|x_i-z_i\|^2\right].
    \end{align*}
    The second inequality uses Assumption \ref{assump: stepsize} and Lemma \ref{lem: ineq_of_seqs}. Multiplying $\Gamma_{k+1}$ on both sides completes the proof.
\end{proof}
{\bf Remark:} If we pick $\gamma_k = \Theta(\tau_k)$ then it suffices to pick $N=1$, which is independent of the iteration number $k$. This suggests using the same timescale for both loops, which matches the result in \cite{chen2021closing}.

Now, note that \eqref{def_rbar} can be written as
\begin{equation}\label{eq: sgdlike_update}
    \bar{r}_{n,y}^{(k+1)} = \bar{r}_{n-1,y}^{(k+1)} - \alpha H_n^{(k+1)} \bar{r}_{n-1,y}^{(k+1)} + \alpha r_{0,y}^{(k+1)},\quad n=1,2,...,M,
\end{equation}
which is essentially a SGD-like update. If we fix $k$ and define 
\[
    x_n := \bar{r}_{n,y}^{(k+1)},\ A_n := H_n^{(k+1)},\ A := \nabla_{yy}^2g(x_k,y_k^{(N)}),\ b_0 := r_{0,y}^{(k+1)},
\]
the above equation becomes $
    x_n = x_{n-1} - \alpha A_n x_{n-1} + \alpha b_0$, with $A_n$ being the unbiased estimator of $A$ and $b_0$ being a biased estimator of $\nabla_y\Psi(x_k,y_k^{(N)})$, since under Assumption~\ref{assump: stochastic_oracles}, we have
\begin{align*}
    \E\left[A_n\right] &= A,\ \E\left[\|A_n - A\|^2\right]\leq \sigma_{H_g}^2,\\
    \E\left[b_0\right] &= \E\left[r_{0,y}^{(k+1)}\right] = \nabla_y f_T(x_k, y_k^{(N)})\prod_{i=2}^{T} \nabla f_{T+1-i}(u_k^{(T+2-i)}).
\end{align*}
Thus, \eqref{eq: sgdlike_update} can be viewed as a $M$-step SGD applied to the following quadratic optimization problem $\min_x \{\frac{1}{2}x\T Ax - b_0\T x\}$, where $b_0$ is obtained from the stochastic oracle ahead of the first update $x_1$ and is fixed during the $M$-step updates. Using standard analysis of SGD, we can bound the variance and the second moment of $x_k$ via the following lemma:

\begin{lemma}\label{lem: SGD_var}
    Suppose that we are given a vector $b_0\in\R^{q}$ and a symmetric positive definite matrix $A\in\R^{q\times q}$  satisfying $\mu I\preceq A\preceq L I$ for $0<\mu\leq L$. Moreover, a sequence $\{x_k\}_{k=0}^{\infty}$ is defined as
    \[
        x_k = (I - \alpha A_{k})x_{k-1} + \alpha b_0,\ x_0 = 0,
    \]
    where $A_k$ satisfies $
        \E\left[A_k\right] = A$, $ \E\left[\|A_k-A\|^2\right]\leq \sigma^2$,     and $A_1,..., A_k$ are independent, $A_i$ and $x_{i-1}$ are also independent. If $\alpha$ satisfies
    \[
        0<\alpha <\min\left\{\frac{\mu}{\mu^2 + \sigma^2},\ \frac{1}{L}\right\},
    \]
    we have $ \E\left[\|x_k - \E\left[x_k\right]\|^2\right]< \frac{\|b_0\|^2}{\mu^2}$ and $ \E\left[\|x_k\|^2\right]< \frac{2\|b_0\|^2}{\mu^2}.$
\end{lemma}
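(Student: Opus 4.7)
The plan is to decompose $x_k$ into its mean and a zero-mean fluctuation, bound each separately, and recombine. First I would take expectations on both sides of the recursion $x_k = (I-\alpha A_k)x_{k-1} + \alpha b_0$, exploiting the independence of $A_k$ from $x_{k-1}$, to obtain the deterministic recursion $\E[x_k] = (I-\alpha A)\E[x_{k-1}] + \alpha b_0$ with $\E[x_0]=0$. Unrolling gives $\E[x_k] = \alpha\sum_{i=0}^{k-1}(I-\alpha A)^i b_0$, and since $\alpha\leq 1/L$ forces $\norm{I-\alpha A}_{\mathrm{op}} \leq 1-\alpha\mu$, a geometric series yields $\norm{\E[x_k]} \leq \norm{b_0}/\mu$.

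Next I would set $e_k := x_k - \E[x_k]$, subtract the two recursions, and obtain
\begin{equation*}
    e_k = (I-\alpha A_k)e_{k-1} + \alpha(A-A_k)\E[x_{k-1}].
\end{equation*}
Expanding $\norm{e_k}^2$ produces three terms. The cross term, after conditioning on $e_{k-1}$ (which is independent of $A_k$), equals $\alpha^2\, e_{k-1}\T\, \E[(A_k-A)\T(A_k-A)]\,\E[x_{k-1}]$; its full expectation vanishes because $\E[e_{k-1}]=0$. For the remaining quadratic term I would use the identity $\E[A_k\T A_k] = A\T A + \E[(A_k-A)\T(A_k-A)]$ to write
\begin{equation*}
    \E\bigl[\norm{(I-\alpha A_k)e_{k-1}}^2\bigm|e_{k-1}\bigr] = e_{k-1}\T(I-\alpha A)^2 e_{k-1} + \alpha^2\, e_{k-1}\T\, \E[(A_k-A)\T(A_k-A)]\, e_{k-1}.
\end{equation*}
Since $\mu I \preceq A \preceq L I$ and by Jensen's inequality $\norm{\E[(A_k-A)\T(A_k-A)]}_{\mathrm{op}} \leq \E[\norm{A_k-A}^2] \leq \sigma^2$, the spectrum of this quadratic form is bounded above by $(1-\alpha\mu)^2 + \alpha^2\sigma^2$. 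The hypothesis $\alpha < \mu/(\mu^2+\sigma^2)$ is exactly what makes this contraction factor strictly smaller than $1-\alpha\mu$. Combining with $\alpha^2\E[\norm{(A-A_k)\E[x_{k-1}]}^2] \leq \alpha^2\sigma^2\norm{b_0}^2/\mu^2$ gives the one-step recursion
\begin{equation*}
    \E[\norm{e_k}^2] \leq (1-\alpha\mu)\,\E[\norm{e_{k-1}}^2] + \frac{\alpha^2\sigma^2\norm{b_0}^2}{\mu^2}.
\end{equation*}

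Iterating from $e_0 = 0$ and summing the resulting geometric series yields $\E[\norm{e_k}^2] \leq \frac{\alpha\sigma^2}{\mu^3}\norm{b_0}^2$, and the assumed bound on $\alpha$ makes this strictly less than $\norm{b_0}^2/\mu^2$, proving the first claim. The second claim follows from the orthogonal decomposition $\E[\norm{x_k}^2] = \norm{\E[x_k]}^2 + \E[\norm{e_k}^2] < 2\norm{b_0}^2/\mu^2$. The main obstacle I anticipate is verifying that the cross term in the expansion of $\norm{e_k}^2$ truly drops out; this depends crucially on the independence of $A_k$ from the full history through iteration $k-1$ and on the fact that the apparent cross term, being linear in $e_{k-1}$, averages out once the outer expectation is taken. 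The other delicate point is keeping the contraction rate sharp: bounding the noise quadratic form by $\alpha^2\sigma^2$ (rather than something larger) is what lets the hypothesis on $\alpha$ close the argument.
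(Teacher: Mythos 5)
Your proof is correct and follows essentially the same route as the paper: bound $\|\E\left[x_k\right]\|\leq \|b_0\|/\mu$ via the geometric series, establish the one-step contraction $\E\left[\|e_k\|^2\right]\leq \big((1-\alpha\mu)^2+\alpha^2\sigma^2\big)\E\left[\|e_{k-1}\|^2\right]+\alpha^2\sigma^2\|b_0\|^2/\mu^2$, use $\alpha\leq\mu/(\mu^2+\sigma^2)$ to reduce the rate to $1-\alpha\mu$, and sum. The only difference is in the bookkeeping of the error recursion: the paper writes $e_k=(I-\alpha A)e_{k-1}+\alpha(A-A_k)x_{k-1}$, so the cross term vanishes conditionally on $x_{k-1}$ and the $\alpha^2\sigma^2\E\left[\|e_{k-1}\|^2\right]$ contribution enters through $\E\left[\|(A-A_k)x_{k-1}\|^2\right]$, whereas your split $e_k=(I-\alpha A_k)e_{k-1}+\alpha(A-A_k)\E\left[x_{k-1}\right]$ moves that contribution into the second moment of the random operator and kills the cross term only after the outer expectation via $\E\left[e_{k-1}\right]=0$ --- a step you correctly identify as the delicate one and justify; both decompositions yield the identical recursion and constants.
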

\begin{proof}
For each $x_k$, we have $x_k = (I - \alpha A_{k})x_{k-1} + \alpha b_0,$ and hence $\E\left[x_k\right] = (I - \alpha A)\E\left[x_{k-1}\right] + \alpha b_0$, which give the closed form of $x_k$ and $\E\left[x_k\right]$ as 
\begin{align*}
    x_{k} &= \alpha\sum_{p=0}^{k-1}\prod_{i=1}^{p}(I - \alpha A_{k+1-i})b_0 + \prod_{i=1}^k(I - \alpha A_{k+1-i})x_0, \\
    \E\left[x_k\right]&=\alpha\left[\sum_{p=0}^{k-1}(I-\alpha A)^p\right]b_0 + (I - \alpha A)^k \E\left[x_0\right].
\end{align*}
Together with $x_0 = 0$, the above implies that
\begin{equation}\label{ineq: Ex}
    \begin{aligned}
        \|\E\left[x_k\right]\| &= \|(I - (I-\alpha A)^{k})A^{-1}b_0\|\leq \frac{\|b_0\|}{\mu}.
    \end{aligned}
\end{equation}
Hence we know
\begin{equation}\label{eq: x_recursion}
    \begin{aligned}
        &\|x_k - \E\left[x_k\right]\|^2 = \|(I - \alpha A)(x_{k-1} - \E\left[x_{k-1}\right]) + \alpha (A - A_k)x_{k-1}\|^2 \\
    =&\|(I - \alpha A)(x_{k-1} - \E\left[x_{k-1}\right])\|^2 + \alpha^2 \|(A - A_{k})x_{k-1}\|^2 \\
    + &2\alpha \<(I - \alpha A)(x_{k-1} - \E\left[x_{k-1}\right]), (A - A_{k})x_{k-1}>.
    \end{aligned}
\end{equation}
Taking expectation on both sides, we know:
\begin{equation}\label{ineq: x_var}
    \begin{aligned}
        &\E\left[\|x_k - \E\left[x_k\right]\|^2\right] \\
    = &\E\left[\|(I - \alpha A)(x_{k-1} - \E\left[x_{k-1}\right])\|^2\right] + \alpha^2\E\left[\|(A - A_{k})x_{k-1}\|^2\right]\\
    \leq &(1-\alpha\mu)^2\E\left[\|x_{k-1} - \E\left[x_{k-1}\right]\|^2\right] + \alpha^2\sigma^2(\E\left[\|x_{k-1}-\E\left[x_{k-1}\right]\|^2\right] + \|\E\left[x_{k-1}\right]\|^2) \\
    \leq & (1-\alpha\mu)\E\left[\|x_{k-1} - \E\left[x_{k-1}\right]\|^2\right] + \frac{\alpha^2\sigma^2\|b_0\|^2}{\mu^2} \\
    \leq &(1-\alpha\mu)^k\E\left[\|x_0 - \E\left[x_0\right]\|^2\right] + \frac{\alpha^2\sigma^2\|b_0\|^2}{\mu^2}\cdot \left(\sum_{i=0}^{k-1}(1-\alpha\mu)^i\right) < \frac{\alpha\sigma^2\|b_0\|^2}{\mu^3}\leq \frac{\|b_0\|^2}{\mu^2}.
    \end{aligned}
\end{equation}
The second inequality is due to a direct result of $\alpha\leq \frac{\mu}{\mu^2+\sigma^2}$:
\[
    (1-\alpha\mu)^2 + \alpha^2\sigma^2\leq 1 - \alpha\mu,
\]
and the fifth inequality uses $\alpha \leq \frac{\mu}{\mu^2 + \sigma^2}\leq \frac{\mu}{\sigma^2}$. For the second moment we have
\begin{equation}\label{ineq: x_2m}
    \E\left[\|x_k\|^2\right] = \E\left[\|x_k - \E\left[x_k\right]\|^2\right] + \|\E\left[x_k\right]\|^2 < \frac{2\|b_0\|^2}{\mu^2}.
\end{equation}
\eqref{ineq: x_var} and \eqref{ineq: x_2m} completes the proof.

\end{proof}
A direct result of Lemma \ref{lem: SGD_var} is the following lemma, which indicates $\bar{r}_{M,y}^{(k+1)}$ in Algorithm \ref{algo: nhe} has bounded variance and bounded second moment, and so does $r^{(k+1)}$.
\begin{lemma}\label{lem: bdd_var}
Suppose that Assumptions \ref{assump: fi_lips}, \ref{assump: stochastic_oracles}, and \ref{assump: g} hold. Define positive constants $\hat \sigma_r, \sigma_{\bar{r}}, \sigma_w$ as
\begin{align*}
    \hat \sigma_r^2 = \prod_{l=1}^{T}\hat \sigma_{J_l}^2,\quad \sigma_{\bar{r}}^2 =\frac{2\hat\sigma_r^2}{\mu_g^2},\quad \sigma_w^2 = (\hat \sigma_r + \sigma_{J_g}\sigma_{\bar{r}})^2.
\end{align*}
    In Algorithm \ref{algo: nhe}, if $\alpha$ satisfy
    \begin{equation}\label{ineq: epsilon_and_M}
        0<\alpha< \min\left\{\frac{\mu_g}{\mu_g^2 + \sigma_{H_g}^2},\ \frac{1}{L_{\nabla g}}\right\},
    \end{equation}
    the output $r$ satisfies $
        \E\left[\|r - \E\left[r\right]\|^2\right]\leq  \E\left[\|r\|^2\right]\leq \sigma_w^2$.
\end{lemma}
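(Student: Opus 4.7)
The plan is to apply Lemma~\ref{lem: SGD_var} to the recursion \eqref{eq: sgdlike_update} in order to control $\bar r_{M,y}$, and then to combine the resulting bound with the decomposition $r = r_{0,x} - J_g \bar r_{M,y}$ through Minkowski's inequality. Matching variables, I would identify $A$ with $\nabla_{yy}^2 g(x_k, y_k^{(N)})$, $A_n$ with $H_n$, $b_0$ with $r_{0,y}$, and $x_n$ with $\bar r_{n,y}$. Assumption~\ref{assump: g} then supplies $\mu_g I \preceq A \preceq L_{\nabla g} I$; Assumption~\ref{assump: stochastic_oracles} supplies unbiasedness of $H_n$ with variance bounded by $\sigma_{H_g}^2$; and the hypothesis \eqref{ineq: epsilon_and_M} matches verbatim the stepsize restriction of Lemma~\ref{lem: SGD_var}.

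The one genuine subtlety is that $b_0 = r_{0,y}$ is random, whereas Lemma~\ref{lem: SGD_var} is stated for deterministic $b_0$. I would therefore apply the lemma after conditioning on the $\sigma$-algebra generated by $\F_k$ together with the Jacobian estimates $\{J^{(i)}\}$ that define $r_0$, and then take total expectation. This is legitimate since, by Assumption~\ref{assump: stochastic_oracles}(2), given $\F_k$ the Hessian samples $\{H_n\}$ are conditionally independent of $\{J^{(i)}\}$ and hence of $r_{0,y}$. The conditional lemma together with the tower property yields
\begin{equation*}
    \E\bigl[\|\bar r_{M,y}\|^2 \bigm| \F_k\bigr] \leq \frac{2}{\mu_g^2}\, \E\bigl[\|r_{0,y}\|^2 \bigm| \F_k\bigr].
\end{equation*}

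To bound the right-hand side, since $r_0 = \prod_{i=1}^{T} J^{(T+1-i)}$, sub-multiplicativity of the spectral norm combined with the cross-level independence of Assumption~\ref{assump: stochastic_oracles}(3) gives $\E[\|r_0\|^2 \mid \F_k] \leq \prod_{i=1}^{T}\hat\sigma_{J_i}^2 = \hat\sigma_r^2$; the blocks satisfy $\|r_{0,x}\|,\|r_{0,y}\| \leq \|r_0\|$, so the same bound propagates, yielding $\E[\|\bar r_{M,y}\|^2 \mid \F_k] \leq \sigma_{\bar r}^2$ and $\E[\|r_{0,x}\|^2 \mid \F_k] \leq \hat\sigma_r^2$. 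A final Minkowski inequality for $r = r_{0,x} - J_g \bar r_{M,y}$ conditional on $\F_k$, using cross-oracle independence of $J_g$ and $\bar r_{M,y}$ given $\F_k$ to factor $\E[\|J_g \bar r_{M,y}\|^2 \mid \F_k] \leq \sigma_{J_g}^2 \sigma_{\bar r}^2$, then delivers $\sqrt{\E[\|r\|^2 \mid \F_k]} \leq \hat\sigma_r + \sigma_{J_g}\sigma_{\bar r} = \sigma_w$. Total expectation gives $\E[\|r\|^2] \leq \sigma_w^2$, and the variance bound follows since variance is dominated by the second moment.

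The main obstacle is keeping track of the $\sigma$-algebra at each conditioning step so that the independence clauses of Assumption~\ref{assump: stochastic_oracles} apply cleanly, both in order to freeze $r_{0,y}$ when invoking Lemma~\ref{lem: SGD_var} and in order to factor the second moment of $J_g \bar r_{M,y}$ into a product of expectations. Once this bookkeeping is organized, the remainder is a direct unfurling of the second-moment bounds provided by the assumption.
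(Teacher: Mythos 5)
Your proposal is correct and follows essentially the same route as the paper's proof: invoke Lemma~\ref{lem: SGD_var} on the recursion for $\bar r_{M,y}$, bound $\E[\|r_0\|^2]$ by $\hat\sigma_r^2$ via submultiplicativity and cross-level independence, and finish with a Minkowski-type bound on $r = r_{0,x} - J_g\bar r_{M,y}$. Your explicit conditioning argument to justify applying Lemma~\ref{lem: SGD_var} with the random $b_0 = r_{0,y}$ is in fact more careful than the paper, which takes that step implicitly.
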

\begin{proof}
    The variance and second moment of $r_0$ in Step 2 of Algorithm \ref{algo: nhe} are bounded under Assumption~\ref{assump: stochastic_oracles} since
    \begin{equation}\label{ineq: r_0_bdd_var_multi}
        \begin{aligned}
            \E\left[\|r_0 - \E\left[r_0\right]\|^2\right]\leq \E\left[\|r_0\|^2\right]= \E\left[\left\|\prod_{l=1}^{T}J^{(l)}\right\|^2\right]\leq \prod_{l=1}^{T}\hat \sigma_{J_l}^2 =\hat \sigma_r^2.
        \end{aligned}
    \end{equation}

    By \eqref{def_rbar} and in the view of Lemma \ref{lem: SGD_var}, we have $\E\left[\|\bar{r}_{M,y}\|^2\right] \leq \frac{2\|r_{0,y}\|^2}{\mu_g^2}$. Taking expectation on both sides of the above inequality, noting that $\bar{r}_{0,y} = 0$ and \eqref{ineq: r_0_bdd_var_multi}, we have
    \begin{equation}\label{ineq: r_bar_2m}
        \E\left[\|\bar{r}_{M,y}\|^2\right] < \frac{2\hat \sigma_r^2}{\mu_g^2} = \sigma_{\bar{r}}^2.
    \end{equation}
    Then for the second moment of $r$, we have:
    \begin{align*}
        \E\left[\|r\|^2\right] = \E\left[\|r_{0,x} - J_g\cdot \bar{r}_{M,y}\|^2\right]\leq\E\left[(\|r_{0,x}\|+\|J_g\cdot \bar{r}_{M,y}\|)^2\right]\leq (\hat \sigma_r + \sigma_{J_g}\sigma_{\bar{r}})^2,
    \end{align*}
    implying $
        \E\left[\|r - \E\left[r\right]\|^2\right]\leq  \E\left[\|r\|^2\right]\leq \sigma_w^2$.
\end{proof}

We also need the following result about the output of Algorithm \ref{algo: nhe}.

\begin{lemma}\label{lem: hypergrad_exp}
    Suppose that Assumptions \ref{assump: fi_lips}, \ref{assump: g}, \ref{assump: stochastic_oracles} hold, and $\alpha$ satisfies \eqref{ineq: epsilon_and_M}. Then we have $\E\left[r\right] = \E\left[r_{0,x}\right] - \nabla_{xy}^2 g(x, y)\left[\nabla_y^2g(x,y)\right]^{-1}\E\left[r_{0,y}\right] + \mathcal{E}$, where $r$ is the output of Algorithm \ref{algo: nhe} and
    \begin{align}
        \mathcal{E} = \nabla_{xy}^2 g(x, y)\left[I - \alpha \nabla_y^2 g(x, y)\right]^M\bar{r}_{*,y},\ \bar{r}_{*,y} = \left[\nabla_y^2g(x, y)\right]^{-1} \E\left[r_{0,y}\right],\label{def_er}
    \end{align}
    and 
    \[
        \|\mathcal{E}\|\leq (1-\alpha \mu_g)^M\cdot\frac{L_{\nabla g} \hat \sigma_r}{\mu_g}.
    \]
\end{lemma}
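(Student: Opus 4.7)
The identification of $\E[r]$ reduces to computing $\E[\bar{r}_{M,y}]$, because the final step of Algorithm~\ref{algo: nhe} gives $r = r_{0,x} - J_g \bar{r}_{M,y}$, and by the between-oracle independence in Assumption~\ref{assump: stochastic_oracles}, $J_g$ is independent of $\bar{r}_{M,y}$ (which only depends on $r_{0,y}$ and $H_1,\dots,H_M$); hence $\E[J_g \bar{r}_{M,y}] = \E[J_g]\,\E[\bar{r}_{M,y}] = \nabla^2_{xy}g(x,y)\,\E[\bar{r}_{M,y}]$.

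\noindent\textbf{Deterministic recursion.} Denote $A := \nabla^2_{yy}g(x,y)$, $b := \E[r_{0,y}]$, and $e_n := \E[\bar{r}_{n,y}]$. Taking expectation in \eqref{def_rbar} and using that $H_n$ is independent of $\bar{r}_{n-1,y}$ (the latter is measurable w.r.t.\ the sigma-algebra generated by $r_{0,y}$ and $H_1,\dots,H_{n-1}$) and of $r_{0,y}$, I obtain $e_n = (I - \alpha A) e_{n-1} + \alpha b$, with $e_0 = 0$. The fixed point is exactly $\bar{r}_{*,y} = A^{-1} b$, and subtracting it from both sides gives $e_n - \bar{r}_{*,y} = (I-\alpha A)(e_{n-1} - \bar{r}_{*,y})$, so by induction $e_M = \bar{r}_{*,y} - (I-\alpha A)^M \bar{r}_{*,y}$. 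Plugging this into the expression for $\E[r]$ yields
\[
\E[r] = \E[r_{0,x}] - \nabla^2_{xy}g(x,y)\,A^{-1}\E[r_{0,y}] + \nabla^2_{xy}g(x,y)\,(I-\alpha A)^M \bar{r}_{*,y},
\]
which is exactly the claimed formula with $\mathcal{E}$ as in \eqref{def_er}.

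\noindent\textbf{Norm bound on $\mathcal{E}$.} I will bound the three factors in $\mathcal{E}$ separately. First, $\|\nabla^2_{xy}g(x,y)\| \le L_{\nabla g}$ by Assumption~\ref{assump: g}. Second, since $\mu_g I \preceq A \preceq L_{\nabla g} I$ and $0 < \alpha \le 1/L_{\nabla g}$, the matrix $I - \alpha A$ is symmetric positive semidefinite with spectral radius at most $1-\alpha\mu_g$, so $\|(I-\alpha A)^M\| \le (1-\alpha\mu_g)^M$. Third, $\|\bar{r}_{*,y}\| \le \|A^{-1}\|\,\|\E[r_{0,y}]\| \le \mu_g^{-1}\,\|\E[r_{0,y}]\|$, and Jensen's inequality together with \eqref{ineq: r_0_bdd_var_multi} from Lemma~\ref{lem: bdd_var} gives $\|\E[r_{0,y}]\|^2 \le \E[\|r_{0,y}\|^2] \le \E[\|r_0\|^2] \le \hat\sigma_r^2$. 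Multiplying the three bounds yields $\|\mathcal{E}\| \le (1-\alpha\mu_g)^M\,L_{\nabla g}\hat\sigma_r/\mu_g$, as claimed.

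\noindent\textbf{Main obstacle.} The only subtle point is justifying that the inner-product cross terms disappear when taking expectations in the recursion, i.e., that $\E[H_n \bar{r}_{n-1,y}]$ factors as $A\,\E[\bar{r}_{n-1,y}]$. This is where the between-oracle independence in Assumption~\ref{assump: stochastic_oracles} is essential: the fresh draw $H_n$ is drawn independently of the previously used $H_1,\dots,H_{n-1}$ and of $r_{0,y}$ (the vector built from the Jacobian samples $J^{(i)}$). Once that independence is spelled out, the rest is a deterministic closed-form computation plus standard operator-norm estimates.
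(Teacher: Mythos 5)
Your proposal is correct and follows essentially the same route as the paper: both reduce the claim to computing $\E[\bar r_{M,y}]$ via the independence structure in Assumption~\ref{assump: stochastic_oracles}, arrive at $\E[\bar r_{M,y}]=\bar r_{*,y}-(I-\alpha \nabla^2_{yy}g)^M\bar r_{*,y}$ (the paper by unrolling the recursion into a geometric sum, you by iterating the expected fixed-point recursion --- an equivalent computation), and then bound $\mathcal{E}$ by the same three operator-norm factors, with $\|\E[r_{0,y}]\|\le\hat\sigma_r$ via Jensen. No gaps.
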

\begin{proof}
Note that the output $\bar{r}^{(k+1)}$ of Algorithm \ref{algo: nhe} takes the following form
\begin{equation}\label{eq: hypergrad_expression_multi}
    \begin{aligned}
        r &= r_{0,x} - J_g\cdot \bar{r}_{M,y}, \\
        \bar{r}_{M,y}&= \alpha\cdot\sum_{i=0}^{M-1}\prod_{n=1}^{i}(I - \alpha H_{M+1-n})\cdot r_{0,y}.
    \end{aligned}
\end{equation}
Noting \eqref{ineq: epsilon_and_M}, definition of $\bar r_{*,y}$ in \eqref{def_er}, Neumann series, 
and under Assumption~\ref{assump: g}, we have
\begin{align}
    \|\bar{r}_{*,y}\| &\leq  \|\left[\nabla_{yy}^2g(x, y)\right]^{-1}\|\cdot \|\E\left[r_{0,y}\right]\|\leq \frac{\hat \sigma_r}{\mu_g},\label{ineq: bar_r_star_bound}\\
    \E\left[\bar{r}_{M,y}\right] &=\alpha\left[\sum_{n=0}^{M-1}(I-\alpha \nabla_{yy}^2 g(x, y))^n\right]\E\left[r_{0,y}\right] \notag\\
    & =\left[I - \left(I - \alpha \nabla_y^2 g(x, y) \right)^M\right]\left[\nabla_y^2g(x, y)\right]^{-1}\E\left[r_{0,y}\right] \notag\\
     &= \bar{r}_{*,y} - \left(I - \alpha \nabla_y^2 g(x, y) \right)^M\bar{r}_{*,y}.\label{eq: bar_r_M_y}
\end{align}

Moreover, by \eqref{eq: hypergrad_expression_multi} and \eqref{eq: bar_r_M_y}, we have
    \begin{equation}\label{eq: algo1_output_multi}
    \begin{aligned}
        \E\left[r\right] = \E\left[r_{0,x} - J_g\cdot \bar{r}_{M,y}\right]= \E\left[r_{0,x}\right] - \nabla_{xy}^2 g(x, y)\left[\nabla_y^2g(x, y)\right]^{-1}\E\left[r_{0,y}\right] + \mathcal{E},
    \end{aligned}
\end{equation}
where $\mathcal{E}$ defined in \eqref{def_er}. 
Under Assumption \ref{assump: g} and by \eqref{ineq: bar_r_star_bound}, we have
    \begin{equation}\label{ineq: hyper_error_multi}
        \|\mathcal{E}\|\leq L_{\nabla g}(1-\alpha\mu_g)^M\cdot \frac{\hat \sigma_r}{\mu_g},
    \end{equation}
    which together with \eqref{eq: algo1_output_multi}, complete the proof.
\end{proof}

Next, we prove the boundedness of some error terms that will be later used in our convergence analysis.
\begin{lemma}\label{lem: z_x_and_delta_d}
    Suppose Assumption \ref{assump: fi_lips}, \ref{assump: stochastic_oracles}, and \ref{assump: g} hold. Then in Algorithm \ref{algo: bilinasa} we have 
    \begin{align*}
        &\beta^2\E\left[\|z_k - x_k\|^2\right] \leq \E\left[\|d_k\|^2\right]\leq \sigma_w^2,\quad \E\left[\|d_{k+1}-d_k\|^2\right] \leq 4\tau_k^2\sigma_w^2,\ \text{for all } k\geq 0
    \end{align*}
\end{lemma}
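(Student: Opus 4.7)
The plan is to establish the three inequalities sequentially, each being a short consequence of the algorithm's update rules together with the second-moment bound on $w_{k+1}$ from Lemma~\ref{lem: bdd_var}.

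First, to prove $\beta^2 \E[\|z_k - x_k\|^2] \leq \E[\|d_k\|^2]$, I would exploit the first-order optimality of the subproblem in Step~2 of Algorithm~\ref{algo: bilinasa}. Since $z_k$ minimizes the strongly convex quadratic $\langle d_k, z - x_k\rangle + \tfrac{\beta}{2}\|z - x_k\|^2$ over the closed convex set $X$, and $x_k \in X$ (which follows inductively from $x_0 \in X$, $z_k \in X$, and the convex combination update $x_{k+1} = x_k + \tau_k(z_k - x_k)$ with $\tau_k \in (0,1]$), testing the variational inequality at $z = x_k$ yields $\beta\|z_k - x_k\|^2 \leq \langle d_k, x_k - z_k\rangle \leq \|d_k\|\|z_k - x_k\|$ via Cauchy--Schwarz. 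Squaring and taking expectation completes this part.

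Next, for $\E[\|d_k\|^2] \leq \sigma_w^2$, I would proceed by induction on $k$, with base case $d_0 = 0$ (or, more generally, any initialization satisfying $\|d_0\|^2 \leq \sigma_w^2$) so that the bound holds trivially at $k=0$. From Step~10 we have $d_{k+1} = (1-\tau_k) d_k + \tau_k w_{k+1}$, so convexity of $\|\cdot\|^2$ and $\tau_k \in (0,1]$ give
\[
    \E\left[\|d_{k+1}\|^2\right] \leq (1-\tau_k) \E\left[\|d_k\|^2\right] + \tau_k \E\left[\|w_{k+1}\|^2\right].
\]
Combining the inductive hypothesis with Lemma~\ref{lem: bdd_var} (which yields $\E[\|w_{k+1}\|^2] \leq \sigma_w^2$) preserves the bound $\sigma_w^2$.

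Finally, the increment bound follows directly from the update: $d_{k+1} - d_k = \tau_k(w_{k+1} - d_k)$, hence $\|d_{k+1} - d_k\|^2 \leq 2\tau_k^2\bigl(\|w_{k+1}\|^2 + \|d_k\|^2\bigr)$, and taking expectation while invoking Lemma~\ref{lem: bdd_var} and the just-established bound $\E[\|d_k\|^2] \leq \sigma_w^2$ yields $\E[\|d_{k+1}-d_k\|^2] \leq 4\tau_k^2\sigma_w^2$. There is no genuine obstacle in this lemma; the only point that requires a small amount of care is ensuring $x_k \in X$ when applying the variational inequality, and fixing a convention on the initialization of $d_0$ so that the induction starts cleanly.
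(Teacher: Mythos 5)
Your proposal is correct and follows essentially the same route as the paper: the variational inequality of the prox subproblem tested at $z=x_k$ (the paper phrases this via the projection $z_k=\Pi_X(x_k-\tfrac{1}{\beta}d_k)$, which is equivalent), a convex-combination/induction argument for $\E[\|d_k\|^2]\leq\sigma_w^2$ using Lemma~\ref{lem: bdd_var}, and the identity $d_{k+1}-d_k=\tau_k(w_{k+1}-d_k)$ for the increment. The only convention you flag (initializing $d_0$ so that $\E[\|d_0\|^2]\leq\sigma_w^2$) is likewise implicit in the paper's argument, so there is no substantive difference.
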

\begin{proof}
    Note that $z_k$ in Algorithm \ref{algo: bilinasa} can be written as $
        z_k = \Pi_X(x_k - \frac{1}{\beta}d_k)$.
    The optimality condition of the projection gives $
        \<d_k + \beta(z_k - x_k), z - z_k>\geq 0, \forall z\in X$. Setting $z = x_k$ and using Cauchy-Schwartz inequality, we note that $
        \beta\|z_k - x_k\|\leq \|d_k\|$,
    which is due to the nonexpansiveness of projection operator. Then we know
    \begin{align*}
        \beta^2\E\left[\|z_k-x_k\|^2\right]\leq \E\left[\|d_k\|^2\right] 
        \leq\max\left(\E\left[\|d_{k-1}\|^2\right], \E\left[\|w_k\|^2\right]\right)\leq \max_{i\leq k}\E\left[\|w_i\|^2\right]\leq \sigma_w^2.
    \end{align*}
    The second inequality uses the fact that $d_k$ is a convex combination of $d_{k-1}$ and $w_k$, the third inequality applies the second inequality on each $\E\left[\|d_i\|^2\right]$, and the fourth ienquality uses Lemma \ref{lem: bdd_var}. Hence the first conclusion is proved. For $\|d_{k+1} - d_k\|$ we have:
    \begin{align*}
        \E\left[\|d_{k+1} - d_k\|^2\right] = \tau_k^2\E\left[\|d_k - w_{k+1}\|^2\right]\leq 2\tau_k^2(\E\left[\|d_k\|^2\right] + \E\left[\|w_{k+1}\|^2\right])\leq 4\tau_k^2\sigma_w^2,
    \end{align*}
    which completes the proof.
\end{proof}

For each $\|u_{k+1}^{(i)} - f_i(u_{k+1}^{(i+1)})\|$ we adopt Lemma 3.1 from \cite{balasubramanian2022stochastic}:
\begin{lemma}\label{lem: u_fu_decrease}
    Suppose Assumption \ref{assump: fi_lips} and \ref{assump: stochastic_oracles} hold. Define 
    \begin{align*}
        \theta_{k+1}^{(i)} &:=2\tau_k\langle \eta_{k+1}^{(i)}, E_{k,i} + (1-\tau_k)(f_i(u_k^{(i+1)}) - u_k^{(i)}) + (\hat \eta_{k+1}^{(i)})\T(u_{k+1}^{(i+1)} - u_k^{(i+1)})\rangle  \nonumber \\ 
        &+2\langle (\hat \eta_{k+1}^{(i)})\T(u_{k+1}^{(i+1)} - u_k^{(i+1)}), E_{k,i}+(1-\tau_k)(f_i(u_k^{(i+1)})-u_k^{(i)})\rangle,\\
        \hat{\theta}_{k+1}^{(i)} &:= \tau_k \langle -\eta_{k+1}^{(i)}, \tau_k(f_i(u_k^{(i+1)}) - u_k^{(i)}) + (J_{k+1}^{(i)})\T(u_{k+1}^{(i+1)}-u_k^{(i+1)})\rangle, \\
        \eta_{k+1}^{(i)} &:= f_i(u_k^{(i+1)}) - F_{k+1}^{(i)},\ \hat \eta_{k+1}^{(i)}: = \nabla f_i(u_k^{(i+1)}) - J_{k+1}^{(i)}, \\
        E_{k,i}  &:= f_i(u_{k+1}^{(i+1)}) - f_i(u_k^{(i+1)}) - \nabla f_i(u_k^{(i+1)})\T (u_{k+1}^{(i+1)}-u_k^{(i+1)}).
    \end{align*}
    Then in Algorithm \ref{algo: bilinasa} the following hold.
    \begin{itemize}
        \item [a)] For $1\leq i \leq T$, 
        \begin{equation}\label{ineq: u_and_fu_multi}
            \begin{aligned}
                \|u_{k+1}^{(i)} - f_i(u_{k+1}^{(i+1)})\|^2 \leq &(1 - \tau_k)\|u_k^{(i)} - f_i(u_k^{(i+1)})\|^2 + \tau_k^2 \|\eta_{k+1}^{(i)}\|^2 + \theta_{k+1}^{(i)} \\
                + &\left[4L_{\nabla f_i}^2 + \|f_i(u_{k}^{(i+1)}) - u_k^{(i)}\| +  \|\hat \eta_{k+1}^{(i)}\|^2\right]\|u_{k+1}^{(i+1)} - u_k^{(i+1)}\|^2,
            \end{aligned}
        \end{equation}
        
        \item [b)] For $1\leq i \leq T$,
        \begin{align*}
            \|u_{k+1}^{(i)} - u_k^{(i)}\|^2\leq \tau_k^2\left[2\| f_i(u_k^{(i+1)}) - u_k^{(i)}\|^2 + \|\eta_{k+1}^{(i)}\|^2 + \frac{2}{\tau_k^2}\|J_{k+1}^{(i)}\|^2 \| u_{k+1}^{(i+1)} - u_k^{(i+1)} \|^2\right]+2 \hat{\theta}_{k+1}^{(i)},
        \end{align*}
    \end{itemize}
\end{lemma}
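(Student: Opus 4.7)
The plan is to derive both bounds by substituting the update rule for $u_{k+1}^{(i)}$ from Algorithm~\ref{algo: bilinasa} into the left-hand sides and then carefully bookkeeping the resulting inner-product cross terms against the prescribed definitions of $\theta_{k+1}^{(i)}$ and $\hat{\theta}_{k+1}^{(i)}$.

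For part (a), the first step is to establish the algebraic identity
\begin{align*}
u_{k+1}^{(i)} - f_i(u_{k+1}^{(i+1)}) = (1-\tau_k)(u_k^{(i)} - f_i(u_k^{(i+1)})) - \tau_k \eta_{k+1}^{(i)} - (\hat{\eta}_{k+1}^{(i)})\T(u_{k+1}^{(i+1)} - u_k^{(i+1)}) - E_{k,i},
\end{align*}
obtained by substituting the defining relation $f_i(u_{k+1}^{(i+1)}) = f_i(u_k^{(i+1)}) + \nabla f_i(u_k^{(i+1)})\T(u_{k+1}^{(i+1)} - u_k^{(i+1)}) + E_{k,i}$ into the expression and regrouping the three contributions coming from $(1-\tau_k)u_k^{(i)}$, $\tau_k F_{k+1}^{(i)}$, and $(J_{k+1}^{(i)})\T(u_{k+1}^{(i+1)} - u_k^{(i+1)})$ using the definitions of $\eta_{k+1}^{(i)}$ and $\hat{\eta}_{k+1}^{(i)}$.

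I would then square both sides. Writing $A = (1-\tau_k)(u_k^{(i)}-f_i(u_k^{(i+1)}))$, $B = -\tau_k \eta_{k+1}^{(i)}$, and $C = -(\hat{\eta}_{k+1}^{(i)})\T(u_{k+1}^{(i+1)} - u_k^{(i+1)}) - E_{k,i}$, the diagonal terms give $\|A\|^2 \le (1-\tau_k)\|u_k^{(i)}-f_i(u_k^{(i+1)})\|^2$ (using $\tau_k\in(0,1]$) and $\|B\|^2 = \tau_k^2\|\eta_{k+1}^{(i)}\|^2$, while expanding $\|C\|^2$ produces $\|\hat{\eta}_{k+1}^{(i)}\|^2\|u_{k+1}^{(i+1)}-u_k^{(i+1)}\|^2$ by Cauchy--Schwarz, a $\|E_{k,i}\|^2$ contribution bounded by $4L_{f_i}^2\|u_{k+1}^{(i+1)}-u_k^{(i+1)}\|^2$ via the triangle inequality and Assumption~\ref{assump: fi_lips}, and one internal cross product that I group with the $2\langle A,B\rangle+2\langle A,C\rangle+2\langle B,C\rangle$ terms. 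A direct calculation shows that $2\langle A,B\rangle + 2\langle B,C\rangle$ collapses to the first line of $\theta_{k+1}^{(i)}$, and $2\langle A,C\rangle$ together with the internal cross product of $\|C\|^2$ collapses to the second line, except for a single leftover $2(1-\tau_k)\langle f_i(u_k^{(i+1)}) - u_k^{(i)}, E_{k,i}\rangle$. I would absorb this via Cauchy--Schwarz and the quadratic Taylor bound $\|E_{k,i}\|\le \tfrac{L_{\nabla f_i}}{2}\|u_{k+1}^{(i+1)}-u_k^{(i+1)}\|^2$, which supplies the $\|f_i(u_k^{(i+1)}) - u_k^{(i)}\|\|u_{k+1}^{(i+1)}-u_k^{(i+1)}\|^2$ factor appearing in the statement.

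For part (b), the update rule directly gives $u_{k+1}^{(i)} - u_k^{(i)} = -\tau_k\eta_{k+1}^{(i)} + \tau_k(f_i(u_k^{(i+1)}) - u_k^{(i)}) + (J_{k+1}^{(i)})\T(u_{k+1}^{(i+1)} - u_k^{(i+1)})$. Squaring isolates the $\tau_k^2\|\eta_{k+1}^{(i)}\|^2$ contribution, and the cross term of $-\tau_k\eta_{k+1}^{(i)}$ with the other two summands is precisely $2\hat{\theta}_{k+1}^{(i)}$ by definition. The squared norm of the remaining two summands is handled by $\|a+b\|^2\le 2\|a\|^2+2\|b\|^2$, yielding $2\tau_k^2\|f_i(u_k^{(i+1)}) - u_k^{(i)}\|^2$ and $2\|J_{k+1}^{(i)}\|^2\|u_{k+1}^{(i+1)}-u_k^{(i+1)}\|^2$, which matches the bracket in the stated bound after pulling out $\tau_k^2$.

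The main obstacle is the bookkeeping in part (a): the definition of $\theta_{k+1}^{(i)}$ bundles three right arguments inside one $\langle \eta_{k+1}^{(i)},\cdot\rangle$ slot and two right arguments inside one $\langle (\hat{\eta}_{k+1}^{(i)})\T(\cdot),\cdot\rangle$ slot, so one must match this precise grouping against the six pairwise cross terms arising from $\|A+B+C\|^2$ together with the extra cross term hidden inside $\|C\|^2$, rather than prematurely bounding them by Young's inequality and losing structure. Once the grouping is verified by direct inspection, the rest of the argument is routine Cauchy--Schwarz, Young's inequality, and the standard Taylor and Lipschitz bounds on $E_{k,i}$.
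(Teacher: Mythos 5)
Your proposal is correct and follows essentially the same route as the paper: the same algebraic identity $u_{k+1}^{(i)} - f_i(u_{k+1}^{(i+1)}) = (1-\tau_k)(u_k^{(i)}-f_i(u_k^{(i+1)})) - \tau_k\eta_{k+1}^{(i)} - (\hat\eta_{k+1}^{(i)})\T(u_{k+1}^{(i+1)}-u_k^{(i+1)}) - E_{k,i}$, the same grouping of cross terms into $\theta_{k+1}^{(i)}$ (with the leftover $2(1-\tau_k)\langle f_i(u_k^{(i+1)})-u_k^{(i)}, E_{k,i}\rangle$ absorbed via Cauchy--Schwarz and $\|E_{k,i}\|\le\tfrac{L_{\nabla f_i}}{2}\|u_{k+1}^{(i+1)}-u_k^{(i+1)}\|^2$), and the identical expansion for part (b) with the $-\tau_k\eta_{k+1}^{(i)}$ cross term giving $2\hat\theta_{k+1}^{(i)}$.
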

\begin{proof}
    For $1\leq i\leq T$, by definition of $E_{k,i}, \hat \eta_{k+1}^{(i)},F_{k+1}^{(i)},u_{k+1}^{(i)}$, and $\theta_{k+1}^{(i)}$, we have
    \begin{align}
        & \|f_i(u_{k+1}^{(i+1)}) - u_{k+1}^{(i)}\|^2 \notag\\ 
        = & \|E_{k,i} + f_i(u_{k}^{(i+1)}) + \nabla f_i(u_{k}^{(i+1)})\T (u_{k+1}^{(i+1)}-u_{k}^{(i+1)}) - (1-\tau_k)u_k^{(i)} - \tau_kF_{k+1}^{(i)} - (J_{k+1}^{(i)})\T(u_{k+1}^{(i+1)}-u_{k}^{(i+1)})\|^2 \notag\\  
        = & \|E_{k,i} + (\hat \eta_{k+1}^{(i)})\T(u_{k+1}^{(i+1)}-u_{k}^{(i+1)}) + (1-\tau_k)(f_i(u_{k}^{(i+1)}) - u_k^{(i)}) + \tau_k\eta_{k+1}^{(i)}\|^2\notag \\  
        =& \|(\hat \eta_{k+1}^{(i)})\T(u_{k+1}^{(i+1)}-u_{k}^{(i+1)})\|^2 + \|E_{k,i} + (1-\tau_k)(f_i(u_{k}^{(i+1)})-u_k^{(i)})\|^2 +\tau_k^2\|\eta_{k+1}^{(i)}\|^2 + \theta_{k+1}^{(i)} \notag\\
        \leq & \|E_{k,i} + (1-\tau_k)(f_i(u_{k}^{(i+1)}) - u_k^{(i)})\|^2 +\tau_k^2\|\eta_{k+1}^{(i)}\|^2 + \theta_{k+1}^{(i)} + \|\hat \eta_{k+1}^{(i)}\|^2\|u_{k+1}^{(i+1)}-u_{k}^{(i+1)}\|^2\notag\\
        \leq & (1-\tau_k)\|f_i(u_{k}^{(i+1)}) - u_k^{(i)}\|^2+\|E_{k,i}\|^2 + 2(1-\tau_k)\<E_{k,i}, (f_i(u_{k}^{(i+1)}) - u_k^{(i)})>\notag \\
        &\qquad +\tau_k^2\|\eta_{k+1}^{(i)}\|^2 + \theta_{k+1}^{(i)}+\|\hat \eta_{k+1}^{(i)}\|^2\|u_{k+1}^{(i+1)}-u_{k}^{(i+1)}\|^2.\label{fi_wi_modified_nasa2}
    \end{align}
    where the second inequality holds by convexity of $\|\cdot\|^2$. By Assumption \ref{assump: fi_lips}, we have
    \begin{equation}\label{ineq: A_ki}
        \|E_{k,i}\| \le \frac{1}{2}\min\left\{4 L_{f_i}\|u_{k+1}^{(i+1)}-u_{k}^{(i+1)}\|, L_{\nabla f_i}\|u_{k+1}^{(i+1)}-u_{k}^{(i+1)}\|^2 \right\},
    \end{equation}
    and using Cauchy–Schwarz inequality in \eqref{fi_wi_modified_nasa2}, we obtain \eqref{ineq: u_and_fu_multi}.
    To show part b), noting definition of $\eta_{k+1}^{(i)}, \hat \eta_{k+1}^{(i)}$ and $\hat{\theta}_{k+1}^{(i)}$, Cauchy-Schwartz and Young's inequality, for $1\leq i \leq T$,
    \begin{align*}
        &\| u_{k+1}^{(i)} - u_k^{(i)}\|^2 \\
        =&\|\tau_k(F_{k+1}^{(i)} - f_i(u_k^{(i+1)})) + \tau_k(f_i(u_k^{(i+1)}) - u_k^{(i)}) + (J_{k+1}^{(i)})\T(u_{k+1}^{(i+1)} - u_{k}^{(i+1)}))\|^2 \\
        =&\tau_k^2\|\eta_{k+1}^{(i)}\|^2 + \|\tau_k(f_i(u_k^{(i+1)}) - u_k^{(i)}) + (J_{k+1}^{(i)})\T(u_{k+1}^{(i+1)} - u_{k}^{(i+1)}))\|^2 \\
        + &2\tau_k \< -\eta_{k+1}^{(i)}, \tau_k(f_i(u_{k}^{(i+1)}) - u_k^{(i)}) + (J_{k+1}^{(i)})\T(u_{k+1}^{(i+1)}-u_{k}^{(i+1)})> \\
        \leq &2\tau_k^2 \| f_i(u_{k}^{(i+1)}) - u_k^{(i)}\|^2 + \tau_k^2 \|\eta_{k+1}^{(i)}\|^2 + 2\|J_{k+1}^{(i)}\|^2 \|u_{k+1}^{(i+1)} - u_{k}^{(i+1)}\|^2 + 2 \hat{\theta}_{k+1}^{(i)}.
    \end{align*}
\end{proof}
Hence we know the decrease of $\|u_k^{(i)} - f_i(u_k^{(i+1)})\|^2$ for $1\leq i\leq T$:
\begin{align}
    &\|u_{k+1}^{(i)} - f_i(u_{k+1}^{(i+1)})\|^2 - \|u_k^{(i)} - f_i(u_k^{(i+1)})\|^2 \leq -\tau_k\|u_k^{(i)} - f_i(u_k^{(i+1)})\|^2 + \tilde{\theta}_{k+1}^{(i)}, \label{ineq: ufu_recursion}\\
    &\tilde{\theta}_{k+1}^{(i)} = \left[4L_{\nabla f_i}^2 + \|f_i(u_{k}^{(i+1)}) - u_k^{(i)}\| +  \|\hat \eta_{k+1}^{(i)}\|^2\right] \|u_{k+1}^{(i+1)} - u_{k}^{(i+1)}\|^2 +\tau_k^2\|\eta_{k+1}^{(i)}\|^2 + \theta_{k+1}^{(i)}. \label{eq: hat_r}
\end{align}
We adopt Lemma 3.2 in \cite{balasubramanian2022stochastic} to characterize $\| u_{k+1}^{(i)} - f_i(u_{k+1}^{(i+1)})\|^2$ and $\|u_{k+1}^{(i)} - u_k^{(i)}\|^2$.
\begin{lemma}\label{lem: uu_ufu_bound}
    Suppose Assumption \ref{assump: fi_lips}, \ref{assump: g}, \ref{assump: stochastic_oracles} and \ref{assump: stepsize} hold. In Algorithm \ref{algo: bilinasa} we have
    \begin{align}
        &\E\left[\|u_{k+1}^{(i)} - u_k^{(i)}\|^2|\F_k\right]\leq a_i\tau_k^2,\ \E\left[\|u_{k+1}^{(T+1)} - u_k^{(T+1)}\|^2|\F_k\right]\leq a_{T+1}\tau_k^2, \label{dif_u}\\ 
        &\E\left[\|u_{k}^{(i)} - f_i(u_{k}^{(i+1)})\|^2\right]\leq b_{i}^2 := \E\left[\|u_0^{(i)} - f_i(u_0^{(i+1)})\|^2\right] + \sigma_{F_i}^2+ (4L_{f_i}^2 + \hat \sigma_{J_i}^2)a_{i+1},\label{dif_uf}
    \end{align}
    for $1\leq i\leq T$. The constants are defined as
    \begin{equation}\label{ineq: A_B_const}
        \begin{aligned}
            a_i := &2b_i + \sigma_{F_i}^2 + 2\hat \sigma_{J_i}^2a_{i+1},\ b_i\geq 0,\\
            a_{T+1} := &\frac{\sigma_w^2}{\beta^2} + N^2c_{\gamma}^2\sigma_v^2 + N^2c_{\gamma}^2L_{\nabla g}\left[\|y_{1}^{(0)} - y_1^*\|^2 + \frac{2\sigma_v^2c_{\gamma}}{\mu_g} + \frac{2L_{y^*}^2\sigma_w^2}{\beta^2}\right] +\frac{N^3c_{\gamma}^4L_{\nabla g}\sigma_v^2}{2}, \\
        \end{aligned}
    \end{equation}
\end{lemma}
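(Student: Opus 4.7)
My plan is to argue by downward induction on the level index $i$, starting from the base case $i = T+1$ and working down to $i = 1$. At each level $i \le T$, I will first establish the $b_i^2$ bound on $\E[\|u_k^{(i)} - f_i(u_k^{(i+1)})\|^2]$ (an auxiliary induction on $k$), and then use that bound, together with the $a_{i+1}\tau_k^2$ bound at the next level up, to conclude the $a_i\tau_k^2$ bound from Lemma~\ref{lem: u_fu_decrease}(b). This mirrors the structure in~\cite{balasubramanian2022stochastic}, except that the top layer now carries both $x_k$ and the inner-loop iterate $y_k^{(N)}$.

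\textbf{Base case ($i = T+1$).} Since $u_k^{(T+1)} = (x_k, y_k^{(N)})$, I split $\|u_{k+1}^{(T+1)} - u_k^{(T+1)}\|^2 = \|x_{k+1}-x_k\|^2 + \|y_{k+1}^{(N)} - y_k^{(N)}\|^2$. The first summand equals $\tau_k^2\|z_k - x_k\|^2$ by Step 3 of Algorithm~\ref{algo: bilinasa}, and Lemma~\ref{lem: z_x_and_delta_d} bounds this by $\tau_k^2\sigma_w^2/\beta^2$. For the second summand I apply \eqref{ineq: inner_part1_2} from Lemma~\ref{lem: inner_analysis}, bounding $\E[\|y_{k+1}^{(0)} - y_{k+1}^*\|^2]$ via \eqref{ineq: inner_part2_1} and then replacing every occurrence of $\E[\|x_k - z_k\|^2]$ by $\sigma_w^2/\beta^2$. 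Using the stepsize constraint $\gamma_{k+1} \le c_\gamma \tau_k$ from \eqref{assump: stepsize} pulls out a common $\tau_k^2$, and collecting coefficients yields exactly the constant $a_{T+1}$ listed in \eqref{ineq: A_B_const}.

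\textbf{Inductive step: the $b_i^2$ bound.} Assume $\E[\|u_{k+1}^{(i+1)} - u_k^{(i+1)}\|^2 \mid \F_k] \le a_{i+1}\tau_k^2$. Starting from the one-step decrease \eqref{ineq: ufu_recursion} with the error term $\tilde\theta_{k+1}^{(i)}$ in \eqref{eq: hat_r}, I take conditional expectation given $\F_k$. The compound noise term $\theta_{k+1}^{(i)}$ vanishes in expectation because $\eta_{k+1}^{(i)}$ and $\hat\eta_{k+1}^{(i)}$ are conditionally mean-zero under Assumption~\ref{assump: stochastic_oracles}(1) and the other factors are $\F_k$-measurable. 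The term $\tau_k^2\|\eta_{k+1}^{(i)}\|^2$ contributes at most $\tau_k^2\sigma_{F_i}^2$. For the coefficient on $\|u_{k+1}^{(i+1)} - u_k^{(i+1)}\|^2$, the $4L_{f_i}^2$ piece gives $4L_{f_i}^2 a_{i+1}\tau_k^2$ directly, and the $\|\hat\eta_{k+1}^{(i)}\|^2$ piece factors by the independence of the oracles across levels (Assumption~\ref{assump: stochastic_oracles}(3)), yielding at most $\hat\sigma_{J_i}^2 \cdot a_{i+1}\tau_k^2$. Combining,
\[
\E[\|u_{k+1}^{(i)} - f_i(u_{k+1}^{(i+1)})\|^2] \le (1-\tau_k)\,\E[\|u_k^{(i)} - f_i(u_k^{(i+1)})\|^2] + \tau_k^2\bigl(\sigma_{F_i}^2 + (4L_{f_i}^2 + \hat\sigma_{J_i}^2)a_{i+1}\bigr).
\]
A simple inner induction on $k$ (with base $k=0$ matching $b_i^2$ by construction, and the inductive step requiring only $\tau_k \le 1$) then gives \eqref{dif_uf}.

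\textbf{Inductive step: the $a_i\tau_k^2$ bound.} With $b_i^2$ in hand, I plug into Lemma~\ref{lem: u_fu_decrease}(b). The $\hat\theta_{k+1}^{(i)}$ term has zero conditional mean by the same argument as above. The remaining three contributions are $2\tau_k^2\|f_i(u_k^{(i+1)}) - u_k^{(i)}\|^2$, $\tau_k^2\|\eta_{k+1}^{(i)}\|^2$, and $2\|J_{k+1}^{(i)}\|^2\|u_{k+1}^{(i+1)} - u_k^{(i+1)}\|^2$. Taking (total) expectation and using $\E[\|f_i(u_k^{(i+1)})-u_k^{(i)}\|^2] \le b_i^2$, $\E[\|\eta_{k+1}^{(i)}\|^2] \le \sigma_{F_i}^2$, and (by cross-level independence) $\E[\|J_{k+1}^{(i)}\|^2\|u_{k+1}^{(i+1)} - u_k^{(i+1)}\|^2] \le \hat\sigma_{J_i}^2 \cdot a_{i+1}\tau_k^2$, I obtain $\E[\|u_{k+1}^{(i)} - u_k^{(i)}\|^2] \le (2b_i^2 + \sigma_{F_i}^2 + 2\hat\sigma_{J_i}^2 a_{i+1})\tau_k^2$, which matches the definition of $a_i$ in \eqref{ineq: A_B_const}.

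\textbf{Main obstacle.} The delicate point is justifying the factorization in the mixed terms $\E[\|\hat\eta_{k+1}^{(i)}\|^2\|u_{k+1}^{(i+1)} - u_k^{(i+1)}\|^2]$ and $\E[\|J_{k+1}^{(i)}\|^2\|u_{k+1}^{(i+1)} - u_k^{(i+1)}\|^2]$: this requires the cross-level independence in Assumption~\ref{assump: stochastic_oracles}(3) together with the fact that $u_{k+1}^{(i+1)}$ depends only on oracles at levels $\ge i+1$, so $J_{k+1}^{(i)}$ is independent of $\|u_{k+1}^{(i+1)} - u_k^{(i+1)}\|^2$ given $\F_k$. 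Once this conditional independence is made explicit, the rest is straightforward propagation of the one-level recursions.
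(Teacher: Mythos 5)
Your overall architecture (backward induction over levels, with level $T+1$ handled via Lemma~\ref{lem: z_x_and_delta_d} and Lemma~\ref{lem: inner_analysis}, then alternating $b_i$- and $a_i$-bounds down to $i=1$) is the same as the paper's, and your base case and your $a_i\tau_k^2$ step are essentially the paper's argument. However, there is a genuine gap in your derivation of the $b_i^2$ bound. You start from the one-step decrease \eqref{ineq: ufu_recursion} with error term $\tilde{\theta}_{k+1}^{(i)}$ from \eqref{eq: hat_r}, and when you enumerate the contributions of $\tilde{\theta}_{k+1}^{(i)}$ you account for the $\theta_{k+1}^{(i)}$ piece, the $\tau_k^2\|\eta_{k+1}^{(i)}\|^2$ piece, and the $4L_{f_i}^2$ and $\|\hat\eta_{k+1}^{(i)}\|^2$ parts of the coefficient multiplying $\|u_{k+1}^{(i+1)}-u_k^{(i+1)}\|^2$ — but you silently drop the remaining part of that coefficient, namely $\|f_i(u_k^{(i+1)})-u_k^{(i)}\|$. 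This term originates from the cross term $2(1-\tau_k)\langle E_{k,i}, f_i(u_k^{(i+1)})-u_k^{(i)}\rangle$ in \eqref{fi_wi_modified_nasa2}, and since $E_{k,i}$ is a Taylor remainder (not conditionally mean-zero), it does not vanish in expectation. Keeping it forces an extra contribution of order $\tau_k^2\,b_i\,a_{i+1}$ into your recursion, and then your ``simple inner induction on $k$ requiring only $\tau_k\le 1$'' no longer closes against the constant $b_i^2$ as defined in \eqref{dif_uf}: one would need something like $\tau_0 a_{i+1}\lesssim b_i$, which is not assumed. In short, \eqref{ineq: ufu_recursion}--\eqref{eq: hat_r} is the inequality the paper reserves for the merit-function analysis in Theorem~\ref{thm:mainresult} (where the cross term is summed and controlled in Lemma~\ref{lem: residual_and_Ek} \emph{after} the $b_i$ bounds are already available); it is the wrong starting point for proving the $b_i$ bounds themselves.

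The paper avoids this circularity by a different decomposition: it writes $u_{k+1}^{(i)}-f_i(u_{k+1}^{(i+1)}) = (1-\tau_k)(u_k^{(i)}-f_i(u_k^{(i+1)})) - \Lambda_{k,i}$ with $\Lambda_{k,i}=E_{k,i}+\tau_k\eta_{k+1}^{(i)}+\hat\eta_{k+1}^{(i)}(u_{k+1}^{(i+1)}-u_k^{(i+1)})$, and applies convexity of $\|\cdot\|^2$ to get $\|u_{k+1}^{(i)}-f_i(u_{k+1}^{(i+1)})\|^2\le(1-\tau_k)\|u_k^{(i)}-f_i(u_k^{(i+1)})\|^2+\frac{1}{\tau_k}\|\Lambda_{k,i}\|^2$. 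The conditional expectation of $\|\Lambda_{k,i}\|^2$ contains no factor of $\|f_i(u_k^{(i+1)})-u_k^{(i)}\|$ (see \eqref{ineq: D_ki}), so dividing by $\Gamma_{k+1}$, telescoping, and invoking Lemma~\ref{lem: ineq_of_seqs} yields exactly the constant $b_i^2$ in \eqref{dif_uf}. You should replace your $b_i$-step with this $\Lambda$-based argument; the rest of your proposal then goes through.
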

\begin{proof}
    Recall definitions of $E_{k,i}, \eta_{k+1}^{(i)}, \hat \eta_{k+1}^{(i)}$, and for $1\leq i\leq T$, define
    \[
        \Lambda_{k,i} = E_{k,i} + \tau_k\eta_{k+1}^{(i)} + \hat \eta_{k+1}^{(i)}(u_{k+1}^{(i+1)}-u_{k}^{(i+1)}).
    \]
    Then we know for $1\leq i\leq T$,
    \[
        u_{k+1}^{(i)} - f_i(u_{k+1}^{(i+1)}) = (1-\tau_k)(u_k^{(i)} - f_i(u_k^{(i+1)})) - \Lambda_{k,i}.
    \]
    Hence by convexity of $\|\cdot\|^2$ we know
    \begin{equation}\label{ineq: u_fu_decrease}
        \|u_{k+1}^{(i)} - f_i(u_{k+1}^{(i+1)})\|^2\leq (1-\tau_k)\|u_{k}^{(i)} - f_i(u_{k}^{(i+1)})\|^2 + \frac{1}{\tau_k}\|\Lambda_{k,i}\|^2.
    \end{equation}
    For $\Lambda_{k,i}$ we have
    \begin{align*}
        &\|\Lambda_{k,i}\|^2 = \|E_{k,i}\|^2 + \tau_k^2 \|\eta_{k+1}^{(i)}\|^2 + \|(\hat \eta_{k+1}^{(i)})\T(u_{k+1}^{(i+1)}-u_{k}^{(i+1)})\|^2 +2 \theta_{k,i}',\\
        &\theta_{k,i}' = \langle E_{k,i},\tau_k \eta_{k+1}^{(i)}+(\hat \eta_{k+1}^{(i)})\T(u_{k+1}^{(i+1)}-u_{k}^{(i+1)}) \rangle + \tau_k \langle \eta_{k+1}^{(i)}, (\hat \eta_{k+1}^{(i)})\T(u_{k+1}^{(i+1)}-u_{k}^{(i+1)})\rangle,
    \end{align*}
    which together with $\E\left[\theta_{k,i}'|\F_k\right] = 0$ imply
    \begin{equation}\label{ineq: D_ki}
        \begin{aligned}
            &\E[\|\Lambda_{k,i}\|^2| \F_k] \\
            = &\E[\|E_{k,i}\|^2| \F_k] + \tau_k^2 \E[\|\eta_{k+1}^{(i)}\|^2| \F_k] +
        \E[\|\hat \eta_{k+1}^{(i)}(u_{k+1}^{(i+1)}-u_{k}^{(i+1)})\|^2| \F_k] \\
        \leq &\tau_k^2 \E[\|\eta_{k+1}^{(i)}\|^2| \F_k] +
        \left(4L_{f_i}^2+\E[\|\hat \eta_{k+1}^{(i)}\|^2|\F_k]\right) \E[\|u_{k+1}^{(i+1)}-u_{k}^{(i+1)}\|^2| \F_k], \\
        \leq &\tau_k^2\sigma_{F_i}^2 + \left(4L_{f_i}^2 + \hat \sigma_{J_i}^2\right)\E[\|u_{k+1}^{(i+1)}-u_{k}^{(i+1)}\|^2| \F_k],
        \end{aligned}
    \end{equation}
    where the inequality follows from \eqref{ineq: A_ki}. Hence we know by $\tau_{k+1}\leq \tau_k$, \eqref{ineq: u_fu_decrease} and \eqref{ineq: D_ki} that
    \begin{equation}
        \begin{aligned}
            &\frac{1}{\Gamma_{k+1}}\E\left[\|u_{k+1}^{(i)} - f_i(u_{k+1}^{(i+1)})\|^2\right]\\
            \leq &\frac{1}{\Gamma_k}\E\left[\|u_k^{(i)} - f_i(u_k^{(i+1)})\|^2\right] +\frac{\sigma_{F_i}^2\tau_k}{\Gamma_k} + \frac{(4L_{f_i}^2 + \hat \sigma_{J_i}^2)}{\tau_k\Gamma_{k+1}}\E\left[\|u_{k+1}^{(i+1)}-u_{k}^{(i+1)}\|^2\right],
        \end{aligned}
    \end{equation}
    which gives
    \begin{equation}\label{ineq: ufu_bound}
        \begin{aligned}
            &\frac{1}{\Gamma_{k+1}}\E\left[\|u_{k+1}^{(i)} - f_i(u_{k+1}^{(i+1)})\|^2\right]\\
            \leq &\E\left[\|u_0^{(i)} - f_i(u_0^{(i+1)})\|^2\right] + \sigma_{F_i}^2\sum_{j=0}^{k}\frac{\tau_j}{\Gamma_j} + (4L_{f_i}^2 + \hat \sigma_{J_i}^2)\sum_{j=0}^{k}\frac{\E\left[\|u_{j+1}^{(i+1)}-u_{j}^{(i+1)}\|^2\right]}{\tau_j\Gamma_{j+1}} \\
            \leq &\E\left[\|u_0^{(i)} - f_i(u_0^{(i+1)})\|^2\right] + \frac{\sigma_{F_i}^2}{\Gamma_{k+1}}+ (4L_{f_i}^2 + \hat \sigma_{J_i}^2)\sum_{j=0}^{k}\frac{\E\left[\|u_{j+1}^{(i+1)}-u_{j}^{(i+1)}\|^2\right]}{\tau_j\Gamma_{j+1}}
        \end{aligned}
    \end{equation}
    By Lemma \ref{lem: u_fu_decrease} we know
    \begin{equation}\label{ineq: uu_bound}
        \begin{aligned}
            &\E\left[\|u_{k+1}^{(i)} - u_k^{(i)}\|^2\right]\\
            \leq &\tau_k^2\E\left[2\|u_k^{(i)} - f_i(u_k^{(i+1)})\|^2 + \|\eta_{k+1}^{(i)}\|^2 + \frac{2}{\tau_k^2}\|J_{k+1}^{(i)}\|^2 \| u_{k+1}^{(i+1)} - u_k^{(i+1)} \|^2\right], \\
            \leq &2\tau_k^2\E\left[\|u_k^{(i)} - f_i(u_k^{(i+1)})\|^2\right] + \tau_k^2\sigma_{F_i}^2 + 2\hat \sigma_{J_i}^2\E\left[\| u_{k+1}^{(i+1)} - u_k^{(i+1)} \|^2\right].
        \end{aligned}
    \end{equation}
    Notice that by definition of $u_k^{(T+1)}$, we have $\|u_{k+1}^{(T+1)} - u_k^{(T+1)}\|^2 = \|x_{k+1}-x_k\|^2 + \|y_{k+1}^{(N)} - y_k^{(N)}\|^2$. Hence by using Lemma \ref{lem: inner_analysis} and \ref{lem: z_x_and_delta_d}, we know
    \begin{align*}
        &\E\left[\|u_{k+1}^{(T+1)} - u_k^{(T+1)}\|^2|\F_k\right]\\
        = &\E\left[\|x_{k+1}-x_k\|^2|\F_k\right] + \E\left[\|y_{k+1}^{(N)} - y_k^{(N)}\|^2|\F_k\right] \\
        \leq &\tau_k^2\frac{\sigma_w^2}{\beta^2} + N^2\gamma_{k+1}^2\sigma_v^2 + \min\left\{N\gamma_{k+1},\ \frac{1}{\mu_g}\right\}NL_{\nabla g}\gamma_{k+1} \E\left[\|y_{k+1}^{(0)} - y_{k+1}^*\|^2|\F_k\right] + \frac{N^3L_{\nabla g}\gamma_{k+1}^4\sigma_v^2}{2} \\
        \leq &\tau_k^2\frac{\sigma_w^2}{\beta^2} + N^2c_{\gamma}^2\sigma_v^2\tau_k^2 + N^2c_{\gamma}^2L_{\nabla g}\tau_k^2\left[\E\left[\|y_{1}^{(0)} - y_1^*\|^2\right] + \frac{2\sigma_v^2c_{\gamma}}{\mu_g} + \frac{2L_{y^*}^2\sigma_w^2}{\beta^2}\right] + \frac{N^3c_{\gamma}^4L_{\nabla g}\sigma_v^2\tau_k^4}{2} \\
        \leq &a_{T+1}\tau_k^2,
    \end{align*}
    where we use \eqref{ineq: inner_part1_1} in the first inequality and \eqref{ineq: inner_part2_1} in the second inequality. Combining the above inequality, \eqref{ineq: ufu_bound}, \eqref{ineq: uu_bound} and Lemma \ref{lem: ineq_of_seqs}, the proof is complete by using backward induction.
\end{proof}
As a direct result of the above lemma, we can now characterize $\sum_{k=0}^{K}\E\left[\tilde{\theta}_{k+1}^{(i)}\right]$. 
\begin{lemma}\label{lem: residual_and_Ek}
    Suppose that Assumptions \ref{assump: fi_lips}, \ref{assump: g}, \ref{assump: stochastic_oracles} and \ref{assump: stepsize} hold. Then, we have
    \begin{align*}                  \sum_{k=0}^{K}\E\left[\tilde{\theta}_{k+1}^{(i)}\right]\leq \left[(4L_{\nabla f_i}^2 + \sigma_{J_i}^2 + b_i)a_{i+1} + \sigma_{F_i}^2\right]\sum_{k=0}^{K}\tau_k^2.
    \end{align*}
\end{lemma}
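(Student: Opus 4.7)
The plan is to bound $\E[\tilde{\theta}_{k+1}^{(i)}]$ term-by-term using the decomposition in~\eqref{eq: hat_r}, and then sum over $k$. Recall
\[
    \tilde{\theta}_{k+1}^{(i)} = \left[4L_{\nabla f_i}^2 + \|f_i(u_{k}^{(i+1)}) - u_k^{(i)}\| +  \|\hat \eta_{k+1}^{(i)}\|^2\right] \|u_{k+1}^{(i+1)} - u_{k}^{(i+1)}\|^2 +\tau_k^2\|\eta_{k+1}^{(i)}\|^2 + \theta_{k+1}^{(i)}.
\]
Each summand will contribute a constant multiple of $\tau_k^2$, after which the desired bound follows by summing over $k = 0, \ldots, K$.

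The first step is to show that $\E[\theta_{k+1}^{(i)} \mid \F_k] = 0$. This is the only place where structural arguments about the filtration are needed. Inspecting the definition of $\theta_{k+1}^{(i)}$, each summand is an inner product in which at least one factor is either $\eta_{k+1}^{(i)}$ or $\hat\eta_{k+1}^{(i)}$ (a mean-zero level-$i$ noise term given $\F_k$), while every other factor depends either on $\F_k$ alone, or on the level-$i+1,\ldots,T$ oracles at iteration $k+1$ (through $u_{k+1}^{(i+1)}$ and $E_{k,i}$). By the across-level independence in Assumption~\ref{assump: stochastic_oracles}(3), together with the within-level independence of $F_{k+1}^{(i)}$ and $J_{k+1}^{(i)}$ in Assumption~\ref{assump: stochastic_oracles}(2), we can pull out a zero-mean factor from each inner product, yielding $\E[\theta_{k+1}^{(i)}\mid \F_k]=0$.

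For the remaining (nonnegative) terms I would condition on $\F_k$ and invoke \eqref{dif_u} from Lemma~\ref{lem: uu_ufu_bound} together with Assumption~\ref{assump: stochastic_oracles}. Concretely: (i) $\E[4L_{\nabla f_i}^2 \|u_{k+1}^{(i+1)}-u_k^{(i+1)}\|^2\mid \F_k]\le 4L_{\nabla f_i}^2 a_{i+1}\tau_k^2$; (ii) since $\hat\eta_{k+1}^{(i)}$ is independent of $u_{k+1}^{(i+1)}-u_k^{(i+1)}$ given $\F_k$ (by across-level independence), the cross term factorizes and is bounded by $\sigma_{J_i}^2 a_{i+1}\tau_k^2$; (iii) $\tau_k^2\E[\|\eta_{k+1}^{(i)}\|^2\mid\F_k]\le \sigma_{F_i}^2 \tau_k^2$. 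The mildly nonstandard piece is the term $\|f_i(u_k^{(i+1)})-u_k^{(i)}\|\cdot \|u_{k+1}^{(i+1)}-u_k^{(i+1)}\|^2$: the first factor is $\F_k$-measurable, so conditioning on $\F_k$ yields at most $\|f_i(u_k^{(i+1)})-u_k^{(i)}\|\, a_{i+1}\tau_k^2$, and taking unconditional expectation with Jensen's inequality gives
\[
    \E\left[\|f_i(u_k^{(i+1)})-u_k^{(i)}\|\right]\le \sqrt{\E\left[\|u_k^{(i)}-f_i(u_k^{(i+1)})\|^2\right]}\le b_i,
\]
where the last step uses \eqref{dif_uf}.

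Combining the four bounds yields $\E[\tilde{\theta}_{k+1}^{(i)}] \le [(4L_{\nabla f_i}^2 + \sigma_{J_i}^2 + b_i)a_{i+1} + \sigma_{F_i}^2]\tau_k^2$, and summing over $k=0,\ldots,K$ gives the claim. I expect the main obstacle to be the careful bookkeeping in Step~1: verifying that each cross term in $\theta_{k+1}^{(i)}$ really has a zero-mean factor that is conditionally independent of the rest. Once that is established, the remaining calculation is routine application of Lemma~\ref{lem: uu_ufu_bound} and Assumption~\ref{assump: stochastic_oracles}.
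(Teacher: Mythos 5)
Your proposal is correct and follows essentially the same route as the paper: drop $\theta_{k+1}^{(i)}$ in expectation, bound the remaining nonnegative terms by conditioning on $\F_k$ and invoking \eqref{dif_u}, handle the mixed term $\|f_i(u_k^{(i+1)})-u_k^{(i)}\|\,\E[\|u_{k+1}^{(i+1)}-u_k^{(i+1)}\|^2\mid\F_k]$ via Jensen and \eqref{dif_uf}, and sum the resulting $\tau_k^2$ bounds. Your explicit verification that $\E[\theta_{k+1}^{(i)}\mid\F_k]=0$ via the within- and across-level independence in Assumption~\ref{assump: stochastic_oracles} is a detail the paper leaves implicit, but it is the same underlying argument.
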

\begin{proof}
    Recalling the definition of $\tilde{\theta}_{k+1}^{(i)}$ in \eqref{eq: hat_r}, we have
    \begin{align*}
        &\sum_{k=0}^{K}\E\left[\tilde{\theta}_{k+1}^{(i)}\right] \\
        = &\sum_{k=0}^{K}\E\left[\left(4L_{\nabla f_i}^2 + \|f_i(u_{k}^{(i+1)}) - u_k^{(i)}\| +  \|\hat \eta_{k+1}^{(i)}\|^2\right) \|u_{k+1}^{(i+1)} - u_{k}^{(i+1)}\|^2 +\tau_k^2\|\eta_{k+1}^{(i)}\|^2\right] \\
        \leq &\sum_{k=0}^{K}\left[(4L_{\nabla f_i}^2 + \sigma_{J_i}^2)a_{i+1} + \sigma_{F_i}^2\right]\tau_k^2 + \sum_{k=0}^{K}\E\left[\E\left[\|u_{k+1}^{(i+1)} - u_{k}^{(i+1)}\|^2|\F_k\right]\|f_i(u_{k}^{(i+1)}) - u_k^{(i)}\|\right], 
    \end{align*}
    which together with \eqref{dif_uf}, complete the proof.
\end{proof}
Now, we define the following merit function 
\begin{equation}\label{eq: merit_multi}
    W_k = \Phi(x_k) - \Phi^* - \eta(x_k,d_k,\beta) + \sum_{i=1}^{T}\rho_i\|u_k^{(i)} - f_i(u_k^{(i+1)})\|^2 + \nu\|y_{k}^{(0)} - y_{k}^*\|^2, 
\end{equation}
which will be used to combine the previous results.  We emphasize here that the above merit function is different from prior analyses of nested compositional problems, e.g.,~\cite{balasubramanian2022stochastic,ruszczynski2020convergence} in that it  is also designed to handle the additional bi-level structure. Recall the measure of optimality,$V_k$, as defined in~\eqref{eq: V_k}.  
The following result analyzes $V_k$ in terms of the above merit function.
\begin{lemma}\label{lem: decrease_of_any}
    Suppose that Assumptions \ref{assump: fi_lips}, \ref{assump: g}, \ref{assump: stochastic_oracles} and \ref{assump: stepsize} hold. Then, we have
    \begin{align*}
        \sum_{i=0}^{K}\tau_i\E\left[\|d_i - \nabla\Phi(x_{i})\|^2\right]=O_T\left(\sum_{i=0}^{K}\tau_i\E\left[\|z_i-x_i\|^2 + \sum_{j=2}^{T}\|f_j(u_i^{(j+1)}) - u_i^{(j)}\|^2 + \|y_i^{(0)} - y_i^*\|^2+ \delta_g^2\right] + 1\right).
    \end{align*}
\end{lemma}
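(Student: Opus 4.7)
The plan is to derive a one-step recursion for $\E\left[\|d_{k+1} - \nabla\Phi(x_{k+1})\|^2\right]$ and then apply Lemma~\ref{lem: decrease_lemma} to sum it against the weights $\tau_k$. Starting from the update $d_{k+1} = (1-\tau_k)d_k + \tau_k w_{k+1}$, I would add and subtract $\nabla\Phi(x_k)$ to write
\[
d_{k+1} - \nabla\Phi(x_{k+1}) = (1-\tau_k)\bigl(d_k - \nabla\Phi(x_k)\bigr) + \tau_k\bigl(w_{k+1} - \nabla\Phi(x_k)\bigr) + \bigl(\nabla\Phi(x_k) - \nabla\Phi(x_{k+1})\bigr),
\]
then square and apply Young's inequality with the split $(1+\tau_k)\|\cdot\|^2 + (1+1/\tau_k)\|\cdot\|^2$ so that the dominant contraction factor in front of $\|d_k - \nabla\Phi(x_k)\|^2$ is $(1-\tau_k)$. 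The smoothness of $\Phi$ (Lemma~\ref{lem: bi-level_smoothness}) together with $\|x_{k+1}-x_k\| = \tau_k\|z_k - x_k\|$ controls the third bracket in terms of $\tau_k^2\|z_k-x_k\|^2$.

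Next, to handle $\E[\|w_{k+1} - \nabla\Phi(x_k)\|^2 \mid \F_k]$, I would decompose it via the bias/variance split
\[
\E\bigl[\|w_{k+1} - \nabla\Phi(x_k)\|^2 \mid \F_k\bigr] = \E\bigl[\|w_{k+1} - \E[w_{k+1}\mid \F_k]\|^2 \mid \F_k\bigr] + \bigl\|\E[w_{k+1}\mid \F_k] - \nabla\Phi(x_k)\bigr\|^2.
\]
Lemma~\ref{lem: bdd_var} bounds the variance piece by $\sigma_w^2$. For the bias, Lemma~\ref{lem: hypergrad_exp} expresses $\E[w_{k+1}\mid \F_k]$ as the exact hypergradient formula evaluated at $(x_k, y_k^{(N)})$ but with $\nabla\Psi$ replaced by $\prod_{i=1}^T \nabla f_{T+1-i}(u_k^{(T+2-i)})$, plus a residual $\mathcal{E}$ of norm $O(\delta_g)$. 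I would then split the bias into the three sources highlighted in the introduction: (i) the compositional bias $\prod_{i=1}^T \nabla f_{T+1-i}(u_k^{(T+2-i)}) - \nabla\Psi(x_k, y_k^{(N)})$, controlled via a telescoping-product argument and Lipschitzness of each $\nabla f_i$ (Assumption~\ref{assump: fi_lips}) by a $T$-dependent constant times $\sum_{j=2}^T \|u_k^{(j)} - f_j(u_k^{(j+1)})\|$; (ii) the bi-level bias from replacing $y_k^* = y^*(x_k)$ by $y_k^{(N)}$ throughout the formula, handled by Lipschitzness of $\nabla_x\Psi$, $\nabla_{xy}^2 g$, $(\nabla_{yy}^2 g)^{-1}$ and $\nabla_y\Psi$ so that it reduces to a constant times $\|y_k^{(N)} - y_k^*\|$; and (iii) the Neumann-series error, directly bounded by $\|\mathcal{E}\| \leq \delta_g \cdot L_{\nabla g}\hat\sigma_r/\mu_g$.

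The tracking error $\|y_k^{(N)} - y_k^*\|^2$ is then converted into $\|y_k^{(0)} - y_k^*\|^2$ using the contraction estimate \eqref{ineq: inner_part1_1}, which yields $\|y_k^{(N)} - y_k^*\|^2 \leq \|y_k^{(0)} - y_k^*\|^2 + N\gamma_k^2\sigma_v^2$ (the second term being $O(\tau_k^2)$). Combining everything, the one-step recursion takes the form
\[
\E\bigl[\|d_{k+1}-\nabla\Phi(x_{k+1})\|^2\bigr] \leq (1-\tau_k)\E\bigl[\|d_k - \nabla\Phi(x_k)\|^2\bigr] + C_T\tau_k\, \E[B_k] + C'_T\tau_k^2,
\]
where $B_k = \|z_k-x_k\|^2 + \sum_{j=2}^T \|f_j(u_k^{(j+1)}) - u_k^{(j)}\|^2 + \|y_k^{(0)} - y_k^*\|^2 + \delta_g^2$ and the constants depend on $T$ through Lemmas~\ref{lem: lip_of_psi} and \ref{lem: bi-level_smoothness}. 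Applying Lemma~\ref{lem: decrease_lemma} with $\delta_k = 1-\tau_k$, so that $\sum_{k=i}^K \tau_k \Gamma_k \leq \Gamma_i$ (i.e.\ $c_i \equiv 1$ by Lemma~\ref{lem: ineq_of_seqs}), and taking a weighted sum against $\tau_i$ yields the claim; the stray $1$ on the right-hand side absorbs both the initialization term $\|d_0 - \nabla\Phi(x_0)\|^2$ and the $\sum_k \tau_k^2 \leq \sum_k \tau_k \cdot \tau_0$ variance accumulation.

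The main obstacle is the careful accounting of the compositional bias in step~(i). The object $\prod_{i=1}^T \nabla f_{T+1-i}(u_k^{(T+2-i)})$ must be compared to the true chain $\nabla_x\Psi(x_k,y_k^{(N)})$, which equals $\prod_{i=1}^T \nabla f_{T+1-i}(\tilde u_k^{(T+2-i)})$ where $\tilde u_k^{(j)} = f_j \circ\cdots\circ f_T(x_k, y_k^{(N)})$. Writing the difference as a telescoping sum over the $T$ factors and using boundedness of $\|\nabla f_i\|$ produces $T$ error terms each proportional to $\|u_k^{(j)} - \tilde u_k^{(j)}\|$, which must then be re-expressed in terms of the algorithmic residuals $\|u_k^{(j)} - f_j(u_k^{(j+1)})\|$ via another induction down the composition. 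Keeping track of the $T$-dependent constants, and then squaring with a factor of $T$ from Cauchy--Schwarz while still preserving the decomposition on the right-hand side of the lemma, is the delicate book-keeping that the $O_T$ notation is meant to absorb.
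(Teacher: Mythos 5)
Your overall architecture --- a one-step recursion for $\E[\|d_k-\nabla\Phi(x_k)\|^2]$, a bias/variance split of $w_{k+1}$, the three-way bias decomposition (compositional, bi-level, Neumann), conversion of $\|y_k^{(N)}-y_k^*\|$ into $\|y_k^{(0)}-y_k^*\|$, and the final summation via Lemma~\ref{lem: decrease_lemma} with $c_i\equiv 1$ --- is exactly the paper's route. However, there is a genuine quantitative gap in how you feed the stochastic fluctuation of $w_{k+1}$ into the recursion. With your Young split, the term $\tau_k(w_{k+1}-\nabla\Phi(x_k))$ sits inside the bracket multiplied by $(1+1/\tau_k)\approx 2/\tau_k$, so after squaring, the variance of $w_{k+1}$ enters the recursion as roughly $4\tau_k\sigma_w^2$, i.e.\ at \emph{first} order in $\tau_k$. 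Summing the resulting $b_k$ in Lemma~\ref{lem: decrease_lemma} then produces a term of order $\sigma_w^2\sum_k\tau_k$, which after dividing by $\sum_j\tau_j$ in Theorem~\ref{thm:mainresult} leaves a non-vanishing constant $\sigma_w^2$. The claimed bound $O_T\bigl(\sum_i\tau_i\E[\cdots+\delta_g^2]+1\bigr)$ contains no such term (and cannot absorb it, since $\sigma_w^2$ is not $O(\delta_g^2)$), so your derivation as written proves a strictly weaker statement that would not suffice for the main theorem; your closing claim that the recursion ends with $C_T'\tau_k^2$ is not what the Young split actually yields.

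The fix is precisely what the paper does: decompose the \emph{vector} before squaring, writing $d_k-\nabla\Phi(x_k) = (1-\tau_{k-1})(d_{k-1}-\nabla\Phi(x_{k-1})) + \tau_{k-1}(e_{k-1}+\Delta_k^F)$ with $\Delta_k^F = w_k-\E[w_k|\F_{k-1}]$ and $e_{k-1}$ collecting the ($\F_{k-1}$-measurable) bias and drift. Upon expanding the square, the cross term involving $\Delta_k^F$ has zero expectation because its partner is $\F_{k-1}$-measurable, so the variance enters only as $\tau_{k-1}^2\E[\|\Delta_k^F\|^2]\le\tau_{k-1}^2\sigma_w^2$, while only the bias $\|e_{k-1}\|^2$ --- which is controlled by $\|z-x\|^2$, $\sum_j\|f_j(u^{(j+1)})-u^{(j)}\|^2$, $\|y^{(N)}-y^*\|^2$ and $\|\mathcal{E}\|^2=O(\delta_g^2)$ --- is charged at order $\tau_{k-1}$. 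With that single repair, the rest of your argument (including the telescoping treatment of the compositional bias and the $T$-dependent bookkeeping) goes through essentially as the paper's proof does.
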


\begin{proof}
    We first analyze the decrease of $\|d_k-\nabla\Phi(x_k)\|^2$. Noting Step 10 of Algorithm \ref{algo: bilinasa} and convexity of $\|\cdot\|^2$, we have
\begin{align}
    \|d_k - \nabla\Phi(x_{k})\|^2 &= \|(1 - \tau_{k-1})(d_{k-1} - \nabla\Phi(x_{k-1})) + \tau_{k-1}(e_{k-1} + \Delta_k^{F})\|^2 \notag\\
    &\leq (1-\tau_{k-1})\|(d_{k-1} - \nabla\Phi(x_{k-1}))\|^2 + \tau_{k-1}\|e_{k-1}\|^2 + \tau_{k-1}^2\|\Delta_k^F\|^2 \notag\\
    &+2\tau_{k-1}\<(1 - \tau_{k-1})(d_{k-1} - \nabla\Phi(x_{k-1})) + \tau_{k-1}e_{k-1}, \Delta_k^F>,\label{d-grad}
\end{align}
where
\begin{align}\label{eq: e_and_delta}
    \begin{aligned}
        e_{k-1} &= \frac{1}{\tau_{k-1}}(\nabla\Phi(x_{k-1}) - \nabla\Phi(x_{k})) + (\E\left[w_k|\F_{k-1}\right] - \nabla\Phi(x_{k-1})), \\
        \Delta_k^{F} &= w_k - \E\left[w_k|\F_{k-1}\right].
    \end{aligned}
\end{align}
Taking expectation on both sides of \eqref{d-grad}, we obtain
\begin{equation}\label{ineq: d_decrease_multi}
    \E\left[\|d_k - \nabla\Phi(x_{k})\|^2\right]\leq (1-\tau_{k-1})\E\left[\|(d_{k-1} - \nabla\Phi(x_{k-1}))\|^2\right] + \tau_{k-1} \E\left[\|e_{k-1}\|^2\right] + \tau_{k-1}^2\E\left[\|\Delta_k^F\|^2\right].
\end{equation}
Noting Lemma~\ref{lem: ineq_of_seqs}, defining
\begin{align*}
    &a_{k+1} = \E\left[\|d_k - \nabla\Phi(x_{k})\|^2\right],\quad b_k = \tau_{k-1} \E\left[\|e_{k-1}\|^2\right] + \tau_{k-1}^2\E\left[\|\Delta_k^F\|^2\right],\quad c_k = 1,\quad \delta_k = 1 - \tau_{k-1},
\end{align*}
and in the view of Lemma~\ref{lem: decrease_lemma}, for $k\geq 1$, we have 
\begin{align}\label{ineq: main_term1_multi}
    \begin{aligned}
        &\sum_{i=0}^{K}\tau_i\E\left[\|d_i - \nabla\Phi(x_{i})\|^2\right]\\
        \leq &\E\left[\|d_0 - \nabla\Phi(x_0)\|^2\right] + \sum_{i=1}^{K}\left(\tau_{i-1} \E\left[\|e_{i-1}\|^2\right] + \tau_{i-1}^2\E\left[\|\Delta_i^F\|^2\right]\right)\\
    \leq &\E\left[\|d_0 - \nabla\Phi(x_0)\|^2\right] + \sum_{i=0}^{K-1}\tau_i\E\left[\|e_i\|^2\right] + \sigma_w^2\sum_{i=0}^{K-1}\tau_i^2,
    \end{aligned}
\end{align}
where the second inequality follows from Lemma \ref{lem: bdd_var} and the definition of $\Delta_i^{F}$ in \eqref{eq: e_and_delta}. Now, to bound $\|e_i\|$, we need to first analyze $\E\left[r_0^{(k+1)}|\F_k\right] - \nabla\Psi(x_k,y_k^*)$. We adopt Lemma 2.4 from \cite{balasubramanian2022stochastic} that 
\begin{align*}
    \|\nabla \Psi(x,y) - \nabla f_T(x,y)\prod_{i=2}^{T}\nabla f_{T+1-i}(u^{(T+2-i)})\|\leq \sum_{j=2}^{T-1}C_j\|f_j(u^{(j+1)}) - u^{(j)}\| + C_T\|f_T(x,y) - u^{(T)}\|,
\end{align*}
where, according to \cite{balasubramanian2022stochastic}, the constants are defined as
\begin{align*}
    &R_1 = L_{\nabla f_1}L_{f_2}\cdots L_{f_T},\ R_j = L_{f_1}\cdots L_{f_{j-1}}L_{\nabla f_j}L_{f_{j+1}}\cdots L_{f_T}/L_{f_j}\quad 2\leq j\leq T-1, \\
    &C_2 = R_1,\ C_j = \sum_{i=1}^{j-2}R_i\left(\prod_{l=i+1}^{j-1}L_{f_l}\right)\quad 3\leq j\leq T.
\end{align*}
By replacing $r_0$ and $\mathcal{E}$ with $r_0^{(k+1)}$ and $\mathcal{E}_k$ in Algorithm \ref{algo: nhe} and Lemma \ref{lem: hypergrad_exp}, to represent their corresponding vectors when the inputs are $x_k, y_k^{(N)}$ and $u_k^{(i)} (1\leq i\leq T)$, we have
\begin{align*}
    \E\left[r_0^{(k+1)}|\F_k\right] = \nabla f_T(x_k,y_k^{(N)})\prod_{i=2}^{T}\nabla f_{T+1-i}(u_k^{(T+2-i)}),
\end{align*}
which together with the Lipschitz smoothness assumption on $\nabla \Psi$, imply that
\begin{align}\label{ineq: exp_and_psi}
    \begin{aligned}
        &\|\E\left[r_0^{(k+1)}|\F_k\right] - \nabla\Psi(x_k,y_k^*)\|\\
    \leq &\|\nabla f_T(x_k,y_k^{(N)})\prod_{i=2}^{T}\nabla f_{T+1-i}(u_k^{(T+2-i)}) - \nabla \Psi(x_k,y_k^{(N)})\| + \|\nabla \Psi(x_k,y_k^{(N)}) - \nabla\Psi(x_k,y_k^*)\|\\
    \leq &\sum_{j=2}^{T}C_j\|f_j(u_k^{(j+1)}) - u_k^{(j)}\| + L_{\nabla\Psi}\|y_k^{(N)} - y_k^*\|
    \end{aligned}
\end{align}
Define the positive constant $\tilde{C}$ as
\begin{align*}
    \tilde{C}^2 = \max\left\{L_{\Phi}^2, \max_{2\leq j\leq T}\left[\left(1 + \frac{L_{\nabla g}^2}{\mu_g^2}\right)C_j^2\right], \left(\sqrt{1 + \frac{L_{\nabla g}^2}{\mu_g^2}}L_{\nabla\Psi} + \frac{(L_{\nabla g}+\mu_g)L_{\nabla^2 g}}{\mu_g^2}\right)^2\right\}
\end{align*}
From \eqref{ineq: exp_and_psi} and Lemma \ref{lem: hypergrad_exp}, we obtain
\begin{align}\label{eq: w_k_decom_multi}
    \begin{aligned}
        &\|\E\left[w_{k+1}|\F_k\right] - \nabla\Phi(x_{k})- \mathcal{E}_k\|\\
        = &\|\E\left[r_{0,x}^{(k+1)}|\F_k\right] -\nabla_{xy}^2 g(x_k, y_k^{(N)})\left[\nabla_y^2g(x_k,y_k^{(N)})\right]^{-1}\E\left[r_{0,y}^{(k+1)}|\F_k\right]\\
        & - \nabla_x \Psi(x_k, y_k^*)+ \nabla_{xy}^2 g(x_k, y_k^*)\left[\nabla_y^2g(x_k,y_k^*)\right]^{-1}\nabla_y \Psi(x_k, y_k^*)\|\\
        \leq&\|\begin{pmatrix}
            I, -\nabla_{xy}^2 g(x_k, y_k^{(N)})\left[\nabla_y^2g(x_k,y_k^{(N)})\right]^{-1}
        \end{pmatrix}\left(\E\left[r_0^{(k+1)}|\F_k\right] - \nabla\Psi(x_k,y_k^*)\right)\| \\
        + &\|\nabla_{xy}^2 g(x_k, y_k^*)\left[\nabla_y^2g(x_k,y_k^*)\right]^{-1}\nabla\Psi_y(x_k,y_k^*) -\nabla_{xy}^2 g(x_k, y_k^{(N)})\left[\nabla_y^2g(x_k,y_k^{(N)})\right]^{-1}\nabla\Psi_y(x_k,y_k^*)\| \\
        \leq & \sqrt{1 + \frac{L_{\nabla g}^2}{\mu_g^2}}\left[\sum_{j=2}^{T}C_j\|f_j(u_k^{(j+1)}) - u_k^{(j)}\| + L_{\nabla\Psi}\|y_k^{(N)} - y_k^*\|\right] +\frac{(L_{\nabla g} + \mu_g)L_{\nabla^2 g}}{\mu_g^2}\|y_k^{(N)} - y_k^*\| \\
        \leq &\tilde{C}\left(\sum_{j=2}^{T}\|f_j(u_k^{(j+1)}) - u_k^{(j)}\| + \|y_k^{(N)} - y_k^*\|\right),
    \end{aligned}
\end{align}
where the third inequality follows from \eqref{ineq: exp_and_psi} and the fact that
\begin{align*}
    &\|\nabla_{xy}^2 g(x_k, y_k^*)\left[\nabla_y^2g(x_k,y_k^*)\right]^{-1}-\nabla_{xy}^2 g(x_k, y_k^{(N)})\left[\nabla_y^2g(x_k,y_k^{(N)})\right]^{-1}\| \\
    \leq &\|\nabla_{xy}^2 g(x_k, y_k^*)\left[\nabla_y^2g(x_k,y_k^*)\right]^{-1}-\nabla_{xy}^2 g(x_k, y_k^{(N)})\left[\nabla_y^2g(x_k,y_k^*)\right]^{-1}\| \\
    + &\|\nabla_{xy}^2 g(x_k, y_k^{(N)})\left[\nabla_y^2g(x_k,y_k^*)\right]^{-1}-\nabla_{xy}^2 g(x_k, y_k^{(N)})\left[\nabla_y^2g(x_k,y_k^{(N)})\right]^{-1}\| \\
    \leq &\frac{L_{\nabla^2g}}{\mu_g}\|y_k^{(N)} - y_k^*\|+L_{\nabla_g}\|\left[\nabla_y^2g(x_k,y_k^*)\right]^{-1}(\nabla_y^2g(x_k,y_k^{(N)}) -  \nabla_y^2g(x_k,y_k^*))\left[\nabla_y^2g(x_k,y_k^{(N)})\right]^{-1}\| \\
    \leq &\frac{(L_{\nabla g} + \mu_g)L_{\nabla^2 g}}{\mu_g^2}\|y_k^{(N)} - y_k^*\|.
\end{align*}
Inequality \eqref{eq: w_k_decom_multi} indicates that
\begin{equation}\label{ineq: E_w_and_nabla_phi}
    \|\E\left[w_{k+1}|\F_k\right] - \nabla\Phi(x_{k})\|\leq \tilde{C}\left(\sum_{j=2}^{T}\|f_j(u_k^{(j+1)}) - u_k^{(j)}\| + \|y_k^{(N)} - y_k^*\|\right) + \|\mathcal{E}_k\|,
\end{equation}
which, together with the definition of $e_i$ in \eqref{eq: e_and_delta}, imply that $\|e_i\|^2\leq \tilde{C}^2(T+2)(\|z_i-x_i\|^2 + \sum_{j=2}^{T}\|f_j(u_i^{(j+1)}) - u_i^{(j)}\|^2 + \|y_i^{(N)} - y_i^*\|^2) + (T+2)\|\mathcal{E}_i\|^2$.
    
By the above inequality and \eqref{ineq: main_term1_multi}, we have
\begin{align}
    &\sum_{i=0}^{K}\tau_i\E\left[\|d_i - \nabla\Phi(x_{i})\|^2\right]\label{ineq: newbound_d_phi}\\
    \leq &\E\left[\|d_0 - \nabla\Phi(x_0)\|^2\right] + \sum_{i=0}^{K-1}\tau_i\E\left[\|e_i\|^2\right] + \sigma_w^2\sum_{i=0}^{K-1}\tau_i^2 \notag\\
    \leq &\tilde{C}^2(T+2)\sum_{i=0}^{K}\tau_i\E\left[\|z_i-x_i\|^2 + \sum_{j=2}^{T}\|f_j(u_i^{(j+1)}) - u_i^{(j)}\|^2 + \|y_i^{(N)} - y_i^*\|^2\right] \notag\\
    + &\frac{L_{\nabla g}^2 \hat \sigma_r^2}{\mu_g^2}(T+2)\delta_g^2\sum_{i=0}^{K}\tau_i + \E\left[\|d_0 - \nabla\Phi(x_0)\|^2\right] + \sigma_w^2\sum_{i=0}^{K-1}\tau_i^2 \notag\\
    = & O_T\left(\sum_{i=0}^{K}\tau_i\E\left[\|z_i-x_i\|^2 + \sum_{j=2}^{T}\|f_j(u_i^{(j+1)}) - u_i^{(j)}\|^2 + \|y_i^{(0)} - y_i^*\|^2 + \delta_g^2\right] + 1\right).\notag
\end{align}
The equality holds due to Lemma \ref{lem: inner_analysis} and under Assumption \ref{assump: stepsize}.
\end{proof}

We are now ready to provide the proof of Theorem~\ref{thm:mainresult}.
\begin{proof}[{\bf Proof of Theorem~\ref{thm:mainresult}}]
Observe that
\begin{align}
    &\E\left[\<\nabla\Phi(x_k) - w_{k+1}, z_k - x_k>\right] = \E\left[\E\left[\<\nabla\Phi(x_k) - w_{k+1}, z_k - x_k>|\F_k\right]\right] \label{ineq: hypergrad_dot_error} \\
        =&\E\left[\<\nabla\Phi(x_k)-\E\left[w_{k+1}|\F_k\right], z_k-x_k>\right]\leq \E\left[\|\nabla\Phi(x_k)-\E\left[w_{k+1}|\F_k\right]\|\cdot \|z_k-x_k\|\right] \notag\\
        \leq & \E\left[\tilde{C}\sum_{j=2}^{T}\|f_j(u_k^{(j+1)}) - u_k^{(j)}\|\|z_k-x_k\| + \tilde{C}\|y_k^{(N)}-y_k^*\|\|z_k-x_k\| + \|\mathcal{E}_k\| \|z_k-x_k\|\right] \notag\\
        \leq &\tilde{C}\E\left[\sum_{j=2}^{T}\|f_j(u_k^{(j+1)}) - u_k^{(j)}\|\|z_k-x_k\| + (\|y_k^{(0)}-y_k^*\| + \sqrt{N}\gamma_k\sigma_v)\|z_k-x_k\|\right] \notag\\
        + & \E\left[\frac{1}{4\lambda}\|\mathcal{E}_k\|^2 + \lambda\|z_k-x_k\|^2\right],\notag
\end{align}
where $\lambda$ is a positive constant to be determined, the second inequality follows from \eqref{ineq: E_w_and_nabla_phi}, the third one follows from Lemma \ref{lem: inner_analysis}, and the fact that
\begin{align*}
    &\E\left[\|y_k^{(N)}-y_k^*\|\|z_k-x_k\|\right] 
    \\
    = &\E\left[\E\left[\|y_k^{(N)}-y_k^*\|\|z_k-x_k\||x_k,z_k,y_k^{(0)}\right]\right] = \E\left[\E\left[\|y_k^{(N)} - y_k^*\||x_k,y_k^{(0)}\right]\|z_k-x_k\|\right] \\
    \leq &\E\left[\sqrt{\E\left[\|y_k^{(N)}-y_k^*\|^2|x_k,y_k^{(0)}\right]}\|z_k-x_k\|\right]\leq \E\left[\sqrt{\|y_k^{(0)}-y_k^*\|^2 + N\gamma_k^2\sigma_v^2}\|z_k-x_k\|\right]\\
    \leq&\E\left[(\|y_k^{(0)}-y_k^*\|+\sqrt{N}\gamma_k\sigma_v)\|z_k-x_k\|\right].
\end{align*}
We also have
\begin{align}
    &\Phi(x_{k+1}) - \Phi(x_k)\leq \frac{L_{\Phi}\tau_k^2}{2}\|z_k - x_k\|^2 + \tau_k\<\nabla\Phi(x_k), z_k - x_k>, \label{ineq: delta_phi_multi}\\
    &\eta(x_k,d_k,\beta) - \eta(x_{k+1},d_{k+1},\beta) \label{ineq: delta_eta_multi}\\
    \leq &-\beta\tau_k\|z_k - x_k\|^2 - \tau_k\<w_{k+1}, z_k-x_k> + \frac{L_{\nabla\eta}}{2}\left[\|x_{k+1}-x_k\|^2 + \|d_{k+1} - d_k\|^2\right], \notag\\
    &\E\left[\|y_{k+1}^{(0)} - y_{k+1}^*\|^2\right] - \E\left[\|y_{k}^{(0)} - y_{k}^*\|^2\right]\leq-\tau_k\E\left[\|y_{k}^{(0)} - y_{k}^*\|^2\right] + 2N\gamma_{k}^2\sigma_v^2 + 2L_{y^*}^2\tau_{k}\E\left[\|x_{k}-z_{k}\|^2\right],\label{ineq: delta_inner_multi}
\end{align}
where \eqref{ineq: delta_phi_multi} uses $L_{\Phi}$-smoothness of $\Phi$, \eqref{ineq: delta_eta_multi} uses $L_{\nabla\eta}$-smoothness of $\eta$ and the definition of $z_k$ (see (3.9) of \cite{ghadimi2020single}), and \eqref{ineq: delta_inner_multi} is from \eqref{ineq: delta_inner_multi_original}. Now, we are ready to bound $W_{k+1} - W_k$. By \eqref{eq: merit_multi}, \eqref{ineq: delta_phi_multi}, \eqref{ineq: delta_eta_multi}, \eqref{ineq: delta_inner_multi}, and Lemma \ref{lem: u_fu_decrease} we have
\begin{align}
    &\E\left[W_{k+1} - W_k\right] \label{ineq: W_decrease}\\
    \leq & \E\left[\frac{L_{\Phi}\tau_k^2}{2}\|z_k - x_k\|^2 + \tau_k\<\nabla\Phi(x_{k}), z_k - x_k>\right] \notag\\
    - &\E\left[\beta\tau_k\|z_k - x_k\|^2 + \tau_k\<w_{k+1}, z_k-x_k>\right] + \frac{L_{\nabla\eta}}{2}\E\left[\|x_{k+1}-x_k\|^2 + \|d_{k+1} - d_k\|^2\right] \notag\\
    - &\sum_{i=1}^{T}\rho_i\tau_k\E\left[\|u_k^{(i)} - f_i(u_k^{(i+1)})\|^2\right] +\sum_{i=1}^{T}\rho_i\E\left[\tilde{\theta}_{k+1}^{(i)}\right] + \nu\left(\E\left[\|y_{k+1}^* - y_{k+1}^{(0)}\|^2\right] - \E\left[\|y_{k}^* - y_{k}^{(0)}\|^2\right]\right) \notag\\
    = & - \beta\tau_k\E\left[\|z_k - x_k\|^2\right] +  \nu\left(\E\left[\|y_{k+1}^* - y_{k+1}^{(0)}\|^2\right] - \E\left[\|y_{k}^* - y_{k}^{(0)}\|^2\right]\right) \notag\\
    - &\sum_{i=1}^{T}\rho_i\tau_k\E\left[\|u_k^{(i)} - f_i(u_k^{(i+1)})\|^2\right] + \tau_k\E\left[\<\nabla\Phi(x_k) - w_{k+1}, z_k - x_k>\right] \notag\\
    + &\frac{(L_{\nabla\eta}+L_{\Phi})\tau_k^2}{2}\E\left[\|z_k-x_k\|^2\right] + \frac{L_{\nabla\eta}}{2}\E\left[\|d_{k+1}-d_k\|^2\right] + \sum_{i=1}^{T}\rho_i\E\left[\tilde{\theta}_{k+1}^{(i)}\right] \notag\\
    \leq &- \beta\tau_k\E\left[\|z_k - x_k\|^2\right] -\nu\tau_k\E\left[\|y_{k}^{(0)} - y_{k}^*\|^2\right] + 2\nu N\gamma_{k}^2\sigma_v^2 + 2\nu L_{y^*}^2\tau_{k}\E\left[\|x_{k}-z_{k}\|^2\right] \notag\\
    -& \sum_{i=1}^{T}\rho_i\tau_k\E\left[\|u_k^{(i)} - f_i(u_k^{(i+1)})\|^2\right] + \tau_k\E\left[\tilde{C}\sum_{j=2}^{T}\|f_j(u_k^{(j+1)}) - u_k^{(j)}\|\|z_k-x_k\|\right] \notag\\
    +&\tau_k\E\left[\tilde{C}(\|y_k^{(0)}-y_k^*\| + \sqrt{N}\gamma_k\sigma_v)\|z_k-x_k\| +\frac{1}{4\lambda}\|\mathcal{E}_k\|^2 + \lambda\|z_k-x_k\|^2\right] \notag\\
    +&\frac{(L_{\nabla\eta}+L_{\Phi})\tau_k^2}{2}\E\left[\|z_k-x_k\|^2\right] + \frac{L_{\nabla\eta}}{2}\E\left[\|d_{k+1}-d_k\|^2\right] + \sum_{i=1}^{T}\rho_i\E\left[\tilde{\theta}_{k+1}^{(i)}\right]. \notag
\end{align}
Assume that we choose the constants $\beta, \nu, \rho_1,...,\rho_T, \lambda$ such that 
\begin{align}\label{ineq: quadratic_form}
    \begin{aligned}
        -&(\beta - \lambda - 2\nu L_{y^*}^2)\|z_k-x_k\|^2 - \nu\|y_k^{(0)}-y_k^*\|^2 - \sum_{i=1}^{T}\rho_i\|u_k^{(i)} - f_i(u_k^{(i+1)})\|^2 \\
    + &\tilde{C}\|z_k-x_k\|\|y_k^{(0)} - y_k^*\| + \tilde{C}\sum_{j=2}^{T}\|z_k-x_k\|\|f_j(u_k^{(j+1)}) - u_k^{(j)}\| \\
    \leq &-c\cdot\left(\|z_k-x_k\|^2 + \|y_k^{(0)}-y_k^*\|^2 + \sum_{i=1}^{T}\|u_k^{(i)} - f_i(u_k^{(i+1)})\|^2\right)
    \end{aligned}
\end{align}
for some constant $c>0$, define
\begin{align}\label{eq: R_k}
    \begin{aligned}
        R_k &= 2\nu N\gamma_{k}^2\sigma_v^2 + \tilde{C}\sqrt{N}\sigma_v\tau_k\gamma_k\E\left[\|z_k-x_k\|\right] + \frac{\tau_k}{4\lambda}\E\left[\|\mathcal{E}_k\|^2\right] \\
        &+ \frac{(L_{\nabla\eta}+L_{\Phi})\tau_k^2}{2}\E\left[\|z_k-x_k\|^2\right] + \frac{L_{\nabla\eta}}{2}\E\left[\|d_{k+1}-d_k\|^2\right] + \sum_{i=1}^{T}\rho_i\E\left[\tilde{\theta}_{k+1}^{(i)}\right].
    \end{aligned}
\end{align}
and the constant
\begin{align}
    \begin{aligned}
        C_{R,1} &= 2\nu Nc_{\gamma}^2\sigma_v^2 + \tilde{C}\sqrt{N}c_{\gamma}\sigma_v\frac{\sigma_w}{\beta} + \frac{(L_{\nabla\eta}+L_{\Phi})\sigma_w^2}{2\beta^2} + 2L_{\nabla \eta}\sigma_w^2 \\
        &+ \sum_{i=1}^{T}\rho_i\left[(4L_{\nabla f_i}^2 + \sigma_{J_i}^2 + b_i)a_{i+1} + \sigma_{F_i}^2\right].
    \end{aligned}
\end{align}
Taking summation on both sides of \eqref{eq: R_k}, we obtain
\begin{align}\label{ineq: sum_Rk}
    \begin{aligned}
        \sum_{k=0}^{K}R_k\leq &2\nu Nc_{\gamma}^2\sigma_v^2\sum_{k=0}^{K}\tau_k^2 + \tilde{C}\sqrt{N}c_{\gamma}\sigma_v\frac{\sigma_w}{\beta}\sum_{k=0}^{K}\tau_k^2 + \frac{1}{4\lambda}\sum_{k=0}^{K}\tau_k\E\left[\|\mathcal{E}_k\|^2\right] \\
        + &\frac{(L_{\nabla\eta}+L_{\Phi})\sigma_w^2}{2\beta^2}\sum_{k=0}^{K}\tau_k^2 + 2L_{\nabla \eta}\sigma_w^2\sum_{k=0}^{K}\tau_k^2 \\
        + &\sum_{i=1}^{T}\rho_i\left[(4L_{\nabla f_i}^2 + \sigma_{J_i}^2 + b_i)a_{i+1} + \sigma_{F_i}^2\right]\sum_{k=0}^{K}\tau_k^2 \leq C_{R,1}\sum_{k=0}^{K}\tau_k^2 + \frac{\delta_g^2}{4\lambda}\sum_{k=0}^{K}\tau_k,
    \end{aligned}
\end{align}
where the second inequality follows from Lemma \ref{lem: hypergrad_exp}. Taking summation on both sides of \eqref{ineq: W_decrease} and using \eqref{ineq: quadratic_form}, \eqref{eq: R_k} and \eqref{ineq: sum_Rk}, we have
\begin{align*}
    &\E\left[W_{K+1}\right] - \E\left[W_0\right]\\
    \leq &-c\cdot\sum_{k=0}^{K}\tau_k\E\left[\|z_k-x_k\|^2 + \|y_k^{(0)}-y_k^*\|^2 + \sum_{i=1}^{T}\|u_k^{(i)} - f_i(u_k^{(i+1)})\|^2\right] \\
    + &C_{R,1}\sum_{k=0}^{K}\tau_k^2 + \frac{\delta_g^2}{4\lambda}\sum_{k=0}^{K}\tau_k,
\end{align*}
which implies that
\begin{align}\label{ineq: merit_sum}
    \begin{aligned}
        &\sum_{k=0}^{K}\tau_k\E\left[\|z_k-x_k\|^2 + \|y_k^{(0)}-y_k^*\|^2 + \sum_{i=1}^{T}\|u_k^{(i)} - f_i(u_k^{(i+1)})\|^2\right]\\
        \leq &\frac{1}{c}\left[\E\left[W_0 - W_{K+1}\right] + C_{R,1}\sum_{k=0}^{K}\tau_k^2 + \frac{\delta_g^2}{4\lambda}\sum_{k=0}^{K}\tau_k\right].
    \end{aligned}
\end{align}
Now, if we choose $R\in\{0, 1,2,...,K\}$ satisfying $P(R=k) = \frac{\tau_k}{\sum_{j=0}^{K}\tau_j}$, we have:
\begin{align*}
    \E\left[V(x_R,d_R)\right]\leq \frac{\max(1,\beta)}{\sum_{j=0}^{K}\tau_j}\sum_{k=0}^{K}\tau_k\E\left[\|z_k-x_k\|^2+\|d_k-\nabla\Phi(x_k)\|^2\right]= O_T\left(\frac{\sum_{k=0}^{K}\tau_k^2 + 1 + \delta_g^2\sum_{k=0}^{K}\tau_k}{\sum_{k=0}^{K}\tau_k}\right).
\end{align*}
The equality holds because of \eqref{ineq: newbound_d_phi} and \eqref{ineq: merit_sum}. 
\end{proof}
To ensure condition \eqref{ineq: quadratic_form}, we need the following technical result.
\begin{lemma}\label{lem: technical_lemma}
    There exist positive constants $\beta, \nu, \rho_1,...,\rho_T, \lambda, c$ such that for any positive constants $x, y, z_1,...,z_T$, we have
    \[
        (\beta - \lambda - 2\nu L_{y^*}^2-c)x^2 + (\nu - c)y^2 + \sum_{i=1}^{T}(\rho_i - c)z_i^2 - \tilde{C}xy - \tilde{C}\sum_{j=2}^{T}xz_i \geq 0
    \]
    implying that \eqref{ineq: quadratic_form} holds.
\end{lemma}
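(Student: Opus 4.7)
The plan is to recognize that the stated inequality is exactly the positive semi-definiteness of a quadratic form in the variables $(x, y, z_1, \ldots, z_T)$ whose only off-diagonal (cross) terms are $-\tilde{C}xy$ and $-\tilde{C}x z_j$ for $j = 2, \ldots, T$. Since the constants $\beta, \nu, \rho_1, \ldots, \rho_T, \lambda, c$ are all at our disposal, it should suffice to decouple the cross terms via Young's inequality and then pick the diagonal coefficients large enough to dominate the resulting bounds.

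Concretely, I would first fix any $c > 0$ and any $\lambda > 0$, and then for any auxiliary positive parameters $a$ and $b_2, \ldots, b_T$, invoke Young's inequality to write
\begin{align*}
\tilde{C} x y &\leq \tfrac{\tilde{C}^2}{4a} x^2 + a y^2, \qquad \tilde{C} x z_j \leq \tfrac{\tilde{C}^2}{4 b_j} x^2 + b_j z_j^2 \quad (j = 2, \ldots, T).
\end{align*}
Substituting these bounds into the left-hand side of the claimed inequality yields a diagonal lower bound whose coefficients on $y^2$ and $z_j^2$ are $\nu - c - a$ and $\rho_j - c - b_j$ respectively, with the coefficient on $x^2$ reduced by $\tfrac{\tilde{C}^2}{4a} + \sum_{j=2}^T \tfrac{\tilde{C}^2}{4 b_j}$.

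The key step is then to choose the constants in the right order. I would set $\nu := c + a$ so that the coefficient on $y^2$ is nonnegative, $\rho_j := c + b_j$ for $j \geq 2$ so that each $z_j$ coefficient is nonnegative, and $\rho_1$ any value $\geq c$ (since $z_1$ appears in no cross term). Finally, I would pick $\beta$ sufficiently large, namely
\[
\beta \;\geq\; \lambda + 2 \nu L_{y^*}^2 + c + \tfrac{\tilde{C}^2}{4a} + \sum_{j=2}^T \tfrac{\tilde{C}^2}{4 b_j},
\]
to guarantee the $x^2$ coefficient is nonnegative as well. This produces a valid choice of all constants making the stated quadratic form nonnegative, which in turn is exactly the condition \eqref{ineq: quadratic_form}.

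I do not anticipate any serious obstacle, since the lemma is purely an existence statement about constants making a diagonally-dominant quadratic form positive semi-definite; the only thing to be careful about is the order of selection (first $c, \lambda, a, b_j$ arbitrarily, then $\nu, \rho_i$ in terms of these, and finally $\beta$ last, so that the quantity $2\nu L_{y^*}^2$ appearing in the $x^2$ coefficient is already fixed by the time $\beta$ is chosen).
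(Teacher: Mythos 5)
Your proof is correct, but it takes a genuinely different route from the paper. The paper writes the quadratic form as $v\T A^{(T)} v$ for an explicit arrowhead matrix $A^{(T)}$, derives a recursion for the determinants of its leading principal submatrices, and then verifies nonnegativity of all these minors for the specific symmetric choice $\nu=\rho_1=\cdots=\rho_T=2c$ and $\beta-\lambda = 4cL_{y^*}^2 + c + \tilde{C}^2(T+1)/(4c)$. You instead decouple each cross term $\tilde{C}xy$ and $\tilde{C}xz_j$ by Young's inequality and then absorb the resulting penalties into the diagonal coefficients, choosing $\nu,\rho_j$ in terms of the Young parameters and $\beta$ last. Your argument is more elementary and avoids the principal-minor criterion for positive semi-definiteness (which, as stated via nonnegativity of leading minors only, is a somewhat delicate test and requires the extra structure of the arrowhead matrix to be conclusive); the paper's determinant computation, on the other hand, yields a fully explicit closed-form choice of all constants, which is what gets quoted downstream. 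Your ordering of the parameter selection (fix $c,\lambda,a,b_j$ first, then $\nu,\rho_i$, then $\beta$) correctly handles the dependence of the $x^2$ coefficient on $\nu$ through the term $2\nu L_{y^*}^2$, so there is no circularity. One cosmetic note: for a clean statement you may also want to record a strictly positive margin (i.e., a residual $c'(x^2+y^2+\sum_i z_i^2)$ on the right-hand side), since that is the form in which \eqref{ineq: quadratic_form} is actually invoked; your construction delivers this immediately by taking the inequalities defining $\nu$, $\rho_j$, and $\beta$ to be strict.
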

\begin{proof}
    Note that the above conclusion is equivalent to show that $A^{(T)}\succeq 0$, where $A^{(T)}=(a_{ij})\in \R^{(T+2)\times (T+2)}$ and its non-zero elements only include
    \[
        a_{11} = (\beta - \lambda - 2\nu L_{y^*}^2-c),\ a_{1i} = a_{i1} = -\frac{\tilde{C}}{2},\ a_{22} = \nu - c,\ a_{jj} = \rho_{j-2}-c,
    \]
    for all $2\leq i\leq T+2$ and $3\leq j\leq T+2$. In other words, we have
    \[
        A^{(T)}=
        \begin{pmatrix}
            &a_{11} &-\frac{\tilde{C}}{2}& -\frac{\tilde{C}}{2} &\cdots &-\frac{\tilde{C}}{2} \\
            &-\frac{\tilde{C}}{2} &\nu-c &0 &\cdots & 0 \\
            &-\frac{\tilde{C}}{2} &0 & \rho_1-c & \cdots & \vdots\\
            &\vdots & \vdots & \vdots & \ddots & 0\\ 
            &-\frac{\tilde{C}}{2} & 0 & \cdots & 0 &\rho_T-c
        \end{pmatrix}
    \]
    Defining $A_{k} = \det(A^{(k)})$, for any $1\leq k\leq T$, we know by induction that
    \begin{align*}
        &A_{k+1} = (\rho_{k+1} - c)A_k - \frac{\tilde{C}^2}{4}(\nu-c)(\rho_1-c)\cdots(\rho_k-c),\\
        &A_1 = (\rho_1-c)(a_{11}(\nu-c) - \frac{\tilde{C}^2}{4}) - \frac{\tilde{C}^2}{4}(\nu-c).
    \end{align*}
    Hence, for any $1\leq k\leq T$, we have
    \[
        A_k = \left[A_0 - \frac{\tilde{C}^2(\nu - c)}{4}\left(\sum_{i=1}^{k}\frac{1}{\rho_i-c}\right)\right]\cdot \prod_{i=1}^{k}\left(\rho_i-c\right),\ A_0 = a_{11}(\nu - c) - \frac{\tilde{C}^2}{4}.
    \]
    Hence, $A^{(T)}\succeq 0$ if and only if for any $0\leq k\leq T, A_k\geq 0$ and $a_{11}\geq 0$. One sufficient condition is to set
    \[
        \nu = \rho_1 = \cdots = \rho_T = 2c,\ \beta - \lambda = 4cL_{y^*}^2 + c + \frac{\tilde{C}^2(T+1)}{4c},\
    \]
    for any constant $c>0$. Then, it is trivial to verify that, for any $0\leq k\leq T$,
    \[
        a_{11} = \frac{\tilde{C}^2(T+1)}{4c} \geq 0,\ A_k = \frac{\tilde{C}^2(T-k)c^k}{4} \geq 0.
    \]
\end{proof}
 
\section{Simulation results}
\label{sec:experiments}
We now present our simulation results on the robust feature learning problem introduced in Section~\ref{sec:intro}. We consider solving the distributionally robust feature learning problem \eqref{eq:dro_minmax_form}, in which $Y$ is generated using multi-index model $Y = \sum_{i=1}^{50}\left(\omega_i\T X + c\cdot \sin(\omega_i\T X)\right) + \varepsilon$, a popular model in statistics and economics~\cite{hardle2004nonparametric,schmidt2020nonparametric}. Here, each $\omega_i$ is sampled from $\mathcal{N}(0, I_{100})$ and then normalized to a unit vector in $\R^{100}$, $\varepsilon$ is the noise sampled from normal distribution $\mathcal{N}(0, 0.01^2)$ for training data and $\mathcal{N}(0, 0.1^2)$ for testing. We set $c=1$ and $1.5$ in our experiments. Covariate $X\in \R^{100}$ and all the coordinates are independent and identically generated from the uniform distribution over $[0, 1]$. We consider a fully-connected neural network with two hidden layers, each of which has 50 nodes. The activation function is (smoothed) ReLU. The $\beta$ in \eqref{eq:dro_minmax_form} represents the weights in the last layer and the feature mapping $\Phi$ is the whole model without the last layer. 

\begin{figure*}[ht]
	\centering  
	\subfigure[]{\label{fig: 36_1sin}\includegraphics[width=0.48\textwidth]{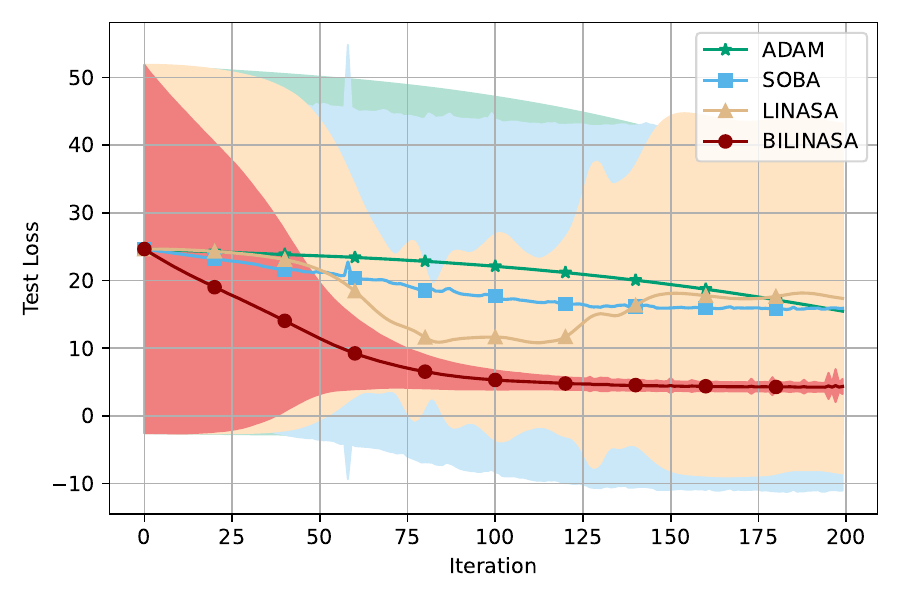}}
	\subfigure[]{\label{fig: 1545_1sin}\includegraphics[width=0.48\textwidth]{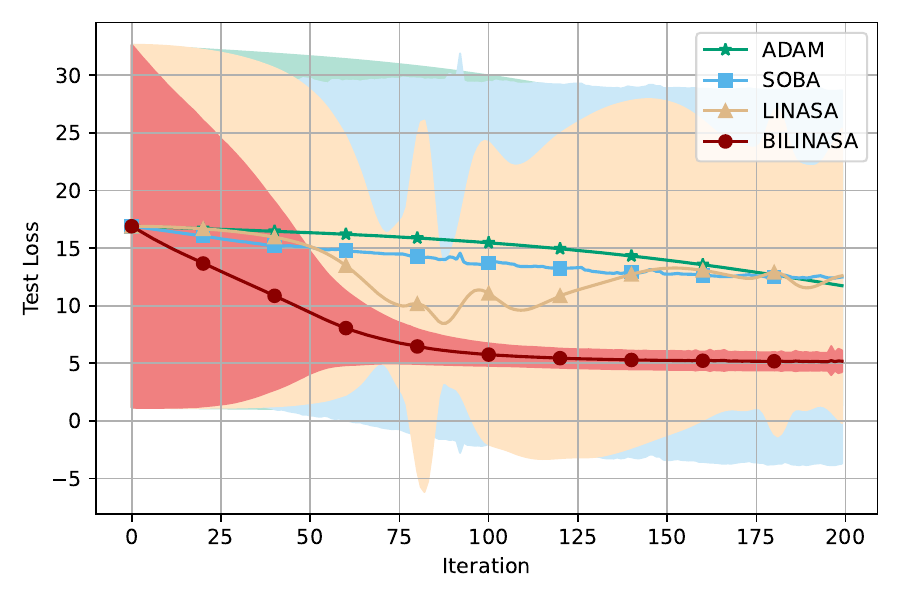}}
        \subfigure[]{\label{fig: 36_15sin}\includegraphics[width=0.48\textwidth]{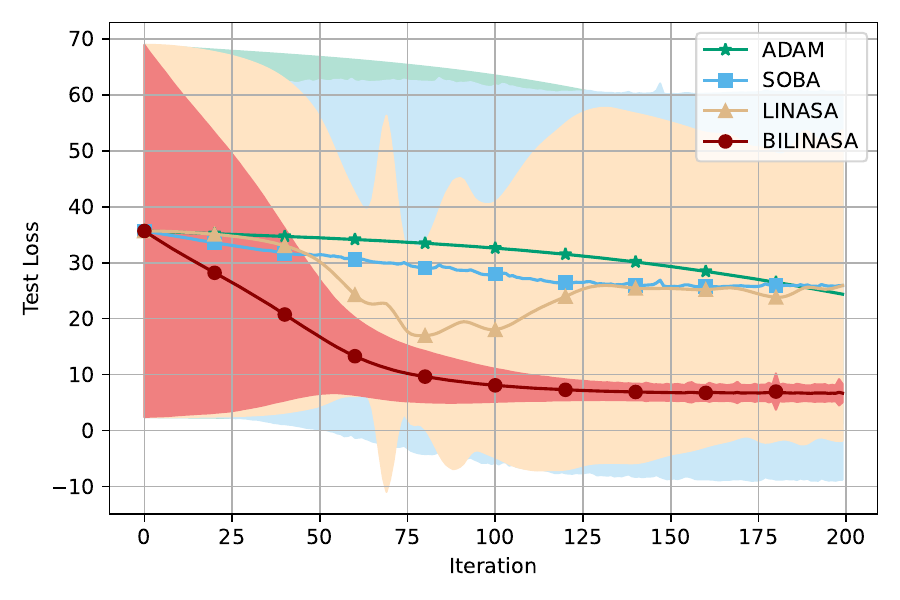}}
    \subfigure[]{\label{fig: 1545_15sin}\includegraphics[width=0.48\textwidth]{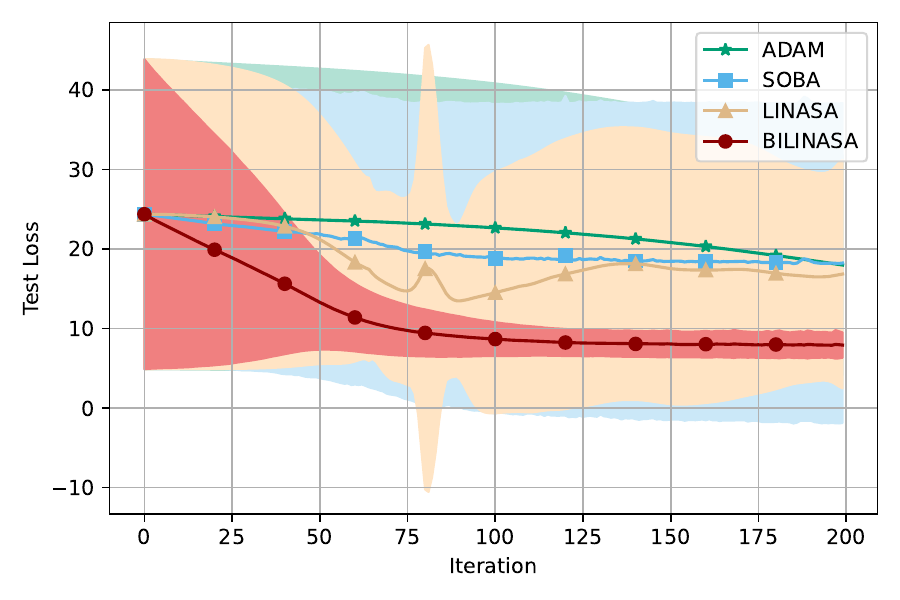}}
	\vspace{-0.2cm}
	\caption{Comparison between \texttt{Adam}, \texttt{SOBA}, \texttt{LINASA}, and \texttt{BILINASA}. Each curve represents the mean value of 100 trials and the shaded regions represent standard deviation. The first row corresponds to the results of the case $c=1$ and the other row corresponds to $c=1.5$. The first column has $(a, b) = (3, 6)$ and the other column has $(a, b) = (1.5, 4.5)$.}
        \label{Fig:main}
\end{figure*}

The reason behind the formulation \eqref{eq:dro_minmax_form} is to learn a robust feature mapping $\Phi$ that can generalize well on a test distribution $Q$. By reformulation \eqref{eq:droform} we know that this problem can be solved via \texttt{BiLiNASA}, i.e., Algorithm \ref{algo: bilinasa}. To compare the generalization performance of \texttt{BiLiNASA}, we also conduct experiments using \texttt{Adam} \cite{kingma2014adam}, \texttt{SOBA} \cite{dagreou2022framework}, and \texttt{LiNASA} \cite{balasubramanian2022stochastic}.
\begin{enumerate}
    \item For \texttt{Adam}, we choose the stepsize to be $10^{-4}$ and solve the following problem:
\begin{align*}
\min_{\Phi, \beta} ~~ \E_P [(Y  - \langle \beta, \Phi(X)\rangle)^2]
\end{align*}
directly.
\item  For \texttt{SOBA}, we set the stepsizes $\rho^t = \gamma^t = \frac{0.01}{\sqrt{K}}$ in Algorithm 1 of \cite{dagreou2022framework}, as suggested in Theorem 1 of \cite{dagreou2022framework}, to solve the following bi-level problem:
\begin{align*}
\min_{\Phi} ~~ \E_P \left[(Y  - \langle \beta, \Phi(X)\rangle)^2\right] \quad\text{s.t.}\quad \beta = \underset{\tilde \beta \in \mathbb{R}^p}{\argmin}~~\E_P\left[ (Y  - \langle \tilde \beta, \Phi(X)\rangle)^2 \right].
\end{align*}
\item For \texttt{LiNASA}, we set $\tau_k = \frac{0.1}{\sqrt{K}}, \beta = 3$ in Algorithm 2 of \cite{balasubramanian2022stochastic}, to solve the following problem:
\begin{align}\label{opt: linasa}
\resizebox{.85\hsize}{!}{$\underset{\Phi, \beta}{\min} \left\{ \E_P[(Y  - \langle \beta, \Phi(X)\rangle)^2] + \lambda (\E_P[\max(0,(Y  - \langle \beta, \Phi(X)\rangle)^2 - \E_P[(Y  - \langle \beta, \Phi(X)\rangle)^2])^2])^{1/2} \right\}$}.
\end{align}
\item For \texttt{BiLiNASA} we choose $\alpha = 0.01, M=\lfloor \log K\rfloor$ in Algorithm \ref{algo: nhe}, and $N=5, \beta = 30, \tau_k = \gamma_k = \frac{0.03}{\sqrt{K}}$ in Algorithm \ref{algo: bilinasa} to solve \eqref{eq:droform}:
\begin{align}\label{opt: bililinasa}
&\resizebox{.9\hsize}{!}{$\underset{\Phi}{\min} \left\{ \E_P[(Y  - \langle \beta, \Phi(X)\rangle)^2] + \lambda (\E_P[\max(0,(Y  - \langle \beta, \Phi(X)\rangle)^2 - \E_P[(Y  - \langle \beta, \Phi(X)\rangle)^2] )^2])^{1/2} \right\}$}\notag\\
\hspace{-0.3in}&\qquad\qquad\text{s.t.}~\beta = \underset{\tilde \beta \in \mathbb{R}^p}{\argmin}~~\E_P\left[ (Y  - \langle \tilde \beta, \Phi(X)\rangle)^2 \right].
\end{align}
\end{enumerate}
For~\eqref{opt: linasa} and~\eqref{opt: bililinasa}, we use a smoothed version of the $\max$ function for the experiments. In all algorithms the number of (outer-loop) iterations (i.e., $K$ in Algorithm \ref{algo: bilinasa}) is fixed to be 200. To evaluate the generalization performance of each algorithm, all entries of each test data $X$ are independent and identically sampled from the test distribution $Q$ which is set to be the Beta distribution over $[0, 1]$. Recall that the corresponding density function is given by
\begin{equation}\label{eq: beta_ab}
    p(x;a,b) = \frac{x^a(1-x)^b}{\int_0^1 u^a(1-u)^b du},
\end{equation}
where we choose $(a, b) = (3, 6)$ and $(1.5, 4.5)$ in our experiments. According to \cite{ruszczynski2006optimization, zhu2023distributionally}, $\lambda$ is chosen to be $1.65\times 10^{-3}$ when $(a, b) = (3, 6)$ and $1.89\times 10^{-3}$ when $(a, b) = (1.5, 4.5)$ in \eqref{eq: beta_ab}. We compare the algorithms via testing each of it on 100 trials, and the results are summarized in Figure \ref{Fig:main}. For all algorithms we evaluate the mean-squared losses on the same test dataset of size 1000. We plot the average of all the losses and the shaded regions represent standard deviation. From the result, we observe that the nested compositional bi-level  formulation solved using the proposed \texttt{BiLiNASA} method outperforms other formulations and algorithms in terms of test loss, which indicates that solving problem \ref{eq:droform} via Algorithm \ref{algo: bilinasa} indeed introduces robustness in the feature learning process. In particular, it is worth noting that the comparison between \texttt{BiLiNASA} and \texttt{LiNASA} indicates the superiority of reformulating the single level optimization problem in \eqref{opt: linasa}, which learns the features and regression coefficients jointly), as a bi-level one in \eqref{opt: bililinasa}, which learns the features robustly and the regression coefficients only using least-squares. 

\section{Conclusion}
In this paper, we study a class of problems at the intersection of bi-level and nested compositional optimization arising in several application domains including robust feature learning. This class of problems consist of bi--level problems in which the upper-level objective function has a nested composition structure
imposing additional challenge in controlling the error in estimating the hypergradient. We propose a novel stochastic approximation algorithm and establish its finite-time convergence analysis showing that it can achieve state-of-the-art complexity bounds (optimal up to an additional log factor) for bilevel nested compositional problems.

\bibliographystyle{abbrv}
\bibliography{references}
\appendix
\section{An issue in Neumann series analysis in prior works}\label{sec: issue}
In this section, we point out the issue in  \cite{ghadimi2018approximation,hong2023two,chen2021closing} on estimating the Hessian inverse using Neumann series. Using our notation, the following inequality is used in the above works; see (3.76) in \cite{ghadimi2018approximation}, Lemma 11 in arXiv version 2 of \cite{hong2023two}, and Lemma 5 in \cite{chen2021closing}:

\begin{equation}\label{ineq: hyper_var}
    \E\left[\|w_k - \E\left[w_k|\F_k\right]\|^2\right]\leq \sigma_w^2,\quad \E\left[\|w_k\|^2\right]\leq \tilde{\sigma}_w^2,
\end{equation}
where $w_k$ is the hypergradient estimate, which is constructed by Neumann series based approach, and $\sigma_w$ and $\tilde{\sigma}_w^2$ are constants that are independent of $M$. 
It essentially means that the variance of the hypergradient estimate can be bounded by some given constants that are independent of $M$. Below, we provide a counter example refuting the claim that  the constant~$\sigma_w^2$ and $\tilde{\sigma}_w^2$ are independent of $M$. 

Suppose we are going to use Neumann series based approach to estimate $A^{-1}$, where $\mu I\preceq A \preceq L I$ for some constants $0<\mu<L$. The process is:
\begin{itemize}
    \item Fix an integer $M>0$. Sample $p$ from $\{0,1,...,M-1\}$ uniformly at random.
    
    \item Set $X = \frac{M}{L}\prod_{i=1}^{p}(I - \frac{1}{L}A_i)$ as the estimate for $A$, where each $A_i$ satisfies $\E\left[A_i\right] = A, \E\left[\|A_i -A\|^2\right]\leq \sigma^2$ and $A_i$ is independent of $A_j$ for $j\neq i$. $X = \frac{M}{L}I$ if $p=0$. 
\end{itemize}
Note that if we set $A_i = A = 1$ (noiseless scalar), then we have:
\begin{align*}
    X &= \frac{M}{L}\bigg(1-\frac{1}{L}\bigg)^p \text{ with probability } \frac{1}{M}, \\
       \E\left[X\right] &= \frac{1}{M}\sum_{p=0}^{M-1}\frac{M}{L}\bigg(1-\frac{1}{L}\bigg)^p = 1 - \bigg(1-\frac{1}{L}\bigg)^M.
\end{align*}
Thus, for $M>L$ we have
\begin{align*}
    &\E\left[\|X- \E\left[X\right]\|^2\right] > \frac{1}{M}\bigg\|\frac{M}{L} - \bigg(1 - \bigg(1-\frac{1}{L}\bigg)^M\bigg)\bigg\|^2 \\
    =\, &\frac{1}{M}\bigg(\frac{M}{L} - 1 + \bigg(1-\frac{1}{L}\bigg)^M\bigg)^2 > \frac{1}{M}\bigg(\frac{M}{L} - 1\bigg)^2,
\end{align*}
where in the first inequality we just consider the case when $p=0$, and the last inequality uses $M>L$. Note that this means the upper bound of the variance of $X$ must depend on $M$ if we use this process.

As a consequence, for the algorithms in the above works, an additional $\log$ factor is introduced in the overall sample complexity. Indeed, as we show in our Theorem~\ref{thm:mainresult}, we need to pick $M=\Theta(\log K)$, with $K$ denoting the number of outer-loops.


\end{document}